\newtheorem{lemma}{Lemma}[section]
\newtheorem{proposition}[lemma]{Proposition}
\newtheorem{thm}[lemma]{Theorem}
\newtheorem{theorem}[lemma]{Theorem}
\newtheorem{corollary}[lemma]{Corollary}
\theoremstyle{defn}
\newtheorem{defn}[lemma]{Definition}
\newtheorem{example}[lemma]{Example}
\theoremstyle{remark}
\newtheorem{rmk}[lemma]{Remark}
\newtheorem{remark}[lemma]{Remark}
\newtheorem{notation}[lemma]{Notation}
\numberwithin{equation}{section}
\numberwithin{table}{section}
\begin{document}
\title{Intermediate Assouad-like dimensions for measures}
\author{Kathryn E. Hare}
\email{kehare@uwaterloo.ca}
\address{Department of Pure Mathematics, University of Waterloo, Waterloo,
Ontario, Canada N2L 3G1}
\thanks{Research of K.E. Hare was supported by NSERC Grant RGPIN-2016-03719}
\author{Kevin G. Hare}
\email{kghare@uwaterloo.ca}
\address{Department of Pure Mathematics, University of Waterloo, Waterloo,
Ontario, Canada N2L 3G1}
\thanks{Research of K.G. Hare was supported by NSERC Grant RGPIN-2019-03930}
\subjclass{Primary 28A78; Secondary 28A80}
\keywords{Assouad dimension, quasi-Assouad dimension, $\theta $-Assouad
spectrum, doubling measures}

\begin{abstract}
The upper and lower Assouad dimensions of a metric space are local variants
of the box dimensions of the space and provide quantitative information
about the `thickest' and `thinnest' parts of the set. Less extreme versions
of these dimensions for sets have been introduced, including the upper and
lower quasi-Assouad dimensions, $\theta $-Assouad spectrum, and $\Phi $%
-dimensions. In this paper, we study the analogue of the upper and lower $%
\Phi $-dimensions for measures. We give general properties of such
dimensions, as well as more specific results for self-similar measures
satisfying various separation properties and discrete measures.
\end{abstract}

\date{\today }
\maketitle


\section{Introduction}

\label{sec:intro}

\subsection{Background}

The upper and lower Assouad dimensions of a metric space are local variants
of the box dimensions of the space and provide quantitative information
about the `thickest' and `thinnest' parts of the set. The analogous upper
and lower Assouad dimensions for measures, denoted $\dim _{A}\mu $ and $\dim
_{L}\mu $ respectively, were introduced by K\"{a}enm\"{a}ki et al in \cite%
{KL, KLV} and by Fraser and Howroyd in \cite{FH}, where they were originally
called upper and lower regularity dimensions respectively. In recent years,
a number of less extreme versions of these dimensions for sets have been
introduced, including the (upper and lower) quasi-Assouad dimensions, \cite%
{CDW, LX}, the $\theta $-Assouad spectrum, \cite{FY}, and the (most general) 
$\Phi $-dimensions, \cite{GHM}. These dimensions can all be different and
provide more refined information about the geometry of the set.

One reason for the interest in the upper Assouad dimension of a measure is
that it is finite if and only if the measure is doubling, meaning there is
some constant $c>0$ such that $\mu (B(z,r))\geq c\mu (B(z,2r))$ for all $%
z\in \mathrm{supp} \mu $. However, as many interesting measures, such as
self-similar measures that do not satisfy the open set condition, often fail
to be doubling, less extreme dimensional notions for measures may also
provide more insightful information. Hence the motivation for studying the
more moderate (upper and lower) quasi-Assouad dimensions of measures, \cite%
{HHT, HT} and the $\theta $-Assouad spectrum for measures, \cite{FK, FT}. In
this paper, we will introduce and study the analogue of the upper and lower $%
\Phi $-dimensions for measures.

To explain how these dimensions are defined, we recall that for the upper
Assouad dimension of the measure $\mu $, we determine the infimal $\alpha $
such that%
\begin{equation*}
\frac{\mu (B(z,R))}{\mu (B(z,r))}\leq C\left( \frac{R}{r}\right) ^{\alpha }
\end{equation*}%
for all $z\in \mathrm{supp} \mu $ and $r<R$. The lower Assouad dimension is
similar, asking for the supremal $\beta $ such that $\mu (B(z,R))/\mu
(B(z,r))\geq C(R/r)^{\beta }$ for all $z\in \mathrm{supp} \mu $ and $r<R$.
As is the case for the quasi-Assouad dimensions and the $\theta $-Assouad
spectrum, the upper and lower $\Phi $-dimensions, denoted $\overline{\dim }%
_{\Phi }\mu $ and $\underline{\dim }_{\Phi }\mu ,$ are computed by further
restricting the choice of $r$, requiring that $r\leq R^{1+\Phi (R)}$. The
(quasi-) Assouad dimensions and $\theta $-Assouad spectrum are all special
cases of $\Phi $-dimensions. For example, the Assouad dimensions are the
special case of $\Phi =0$.

We refer the reader to Definition \ref{PhiDimMeas} for the precise
definitions of all these notions.

\subsection{Overview of the paper}

In Section 2 we establish basic properties of these dimensions. For example,
we show that 
\begin{equation*}
\dim _{L}\mu \leq \underline{\dim }_{\Phi }\mu \leq \inf_{z}\underline{\dim }%
_{\mathrm{loc}}\mu (z)\leq \sup_{z}\overline{\dim }_{\mathrm{loc}}\mu
(z)\leq \overline{\dim }_{\Phi }\mu \leq \dim _{A}\mu
\end{equation*}%
where $\overline{\dim }_{\mathrm{loc}}\mu (z)$ and $\underline{\dim }_{%
\mathrm{loc}}\mu (z)$ are the upper and lower local dimensions of $\mu $ at $%
z\in \mathrm{supp} \mu $. If the function $\Phi (x)\rightarrow 0$ as $%
x\rightarrow 0$, then the $\Phi $-dimensions lie between the quasi-Assouad
and Assouad dimensions of the measure. We show that $\overline{\dim }_{\Phi
}\mu \geq \overline{\dim }_{\Phi }\mathrm{supp} \mu $ and $\underline{\dim }%
_{\Phi }\mu \leq \underline{\dim }_{\Phi }\mathrm{supp} \mu $ if $\mu $ is a
doubling measure. Examples are given to see that all these inequalities can
be strict.

It is clear that if $\Phi (x)\geq \Psi (x)$ for all $x>0$, then $\overline{%
\dim }_{\Phi }\mu \leq \overline{\dim }_{\Psi }\mu ,$ and conversely for the
lower dimensions. In Proposition \ref{comparison} we prove, more
specifically, that if there exists $\lambda <1$ such that $\Phi (x)\geq
\lambda \Psi (x)$ for all $x$, then%
\begin{equation*}
\overline{\dim }_{\Psi }\mu \geq \lambda \overline{\dim }_{\Phi }\mu \text{
and }\underline{\dim }_{\Psi }\mu \leq \underline{\dim }_{\Phi }\mu
+(1-\lambda )\dim _{A}\mu .
\end{equation*}%
It follows that if $\lim_{x\rightarrow 0}\Phi (x)/\Psi (x)\rightarrow 1,$
then the upper $\Phi $ and $\Psi $-dimensions coincide, as do the lower $%
\Phi $ and $\Psi $-dimensions if $\mu $ is doubling. Moreover, if $\Phi $
and $\Psi $ are both constant functions, then $\overline{\dim }_{\Psi }\mu $
and $\overline{\dim }_{\Phi }\mu $ are simultaneously finite for any measure 
$\mu$.

In Theorem \ref{thm:box}, bounds are given for the $\Phi $-dimensions in
terms of the exponents $s,t$ satisfying $cr^{t}\leq \mu (B(z,r))\leq Cr^{s}$
for all $z\in \mathrm{supp} \mu $ and all $r$. Indeed, if $\Phi
(x)\rightarrow \infty $ as $x\rightarrow 0$, then $\overline{\dim }_{\Phi
}\mu $ is the infimum of such $t$ and $\underline{\dim }_{\Phi }\mu $ is the
supremum of such $s$ if, in addition, $\mu $ is doubling. This improves upon
results in \cite{FK}.

In \cite{FT}, it is asked if an absolutely continuous measure with positive
lower Assouad dimension has its density function in $L^{p}$ for some $p>1$.
In Proposition \ref{FTQues}, we answer this in the negative.

In Section \ref{sec:self-similar}, we study the dimensional properties of
self-similar measures. In contrast to the case for general measures, and
even for self-similar measures satisfying the open set condition, in Theorem %
\ref{SSC} we see that if $\mu $ is a self-similar measure satisfying the
strong separation condition, then 
\begin{equation*}
\sup_{z\in \mathrm{supp} \mu }\overline{\dim }_{\mathrm{loc}}\mu (z)=%
\overline{\dim }_{\Phi }\mu \text{ and }\underline{\dim }_{\Phi }\mu
=\inf_{z\in \mathrm{supp} \mu }\underline{\dim }_{\mathrm{loc}}\mu (z).
\end{equation*}%
In Theorem \ref{T:doubling}, we characterize the finiteness of $\overline{%
\dim }_{\Phi }\mu$, in terms of a doubling-like property, for any
self-similar measure $\mu$ on $\mathbb{R}$ whose support is an interval and
which satisfies the weak separation condition. Consequently, any
equicontractive, self-similar measure $\mu $ with interval support and
satisfying the weak separation condition has the property that $\overline{%
\dim }_{\Phi }\mu <\infty $ for all non-zero, constant functions $\Phi $.
If, in addition, the probabilities associated with the left-most and
right-most contractions in the underlying IFS are minimal, then we even have 
$\overline{\dim }_{\Phi }\mu <\infty $ for any $\Phi $ satisfying $\Phi
(x)/\Psi (x)\rightarrow \infty $ as $x\rightarrow 0$ for $\Psi (x)=\log
|\log x|/|\log x|$. In particular, $\dim _{qA}\mu <\infty $ for such
measures $\mu $. We give an example to show that the function $\Psi (x)$ is
sharp. We also prove that $\dim _{qA}\mu =\infty $ for any biased Bernoulli
convolution with contraction factor the inverse of the golden mean, thus the
extra assumption on the probabilities is also a necessary condition.

In \cite{FH}, Fraser and Howroyd compute the upper Assouad dimension of
discrete measures of the form $\mu =\Sigma _{n=1}^{\infty }p_{n}\delta
_{a_{n}},$ for summable $p_{n}$ either of the form $n^{-\lambda }$ for $%
\beta ^{-n},$ and for $a_{n}$ of similar form. Of course, the support of
such a measure is $\{a_{n}\}_{n=1}^{\infty }\cup \{0\}$. These measures are
the focus of Section \ref{sec:discrete}. We extend the results of \cite{FH}
to all the upper $\Phi $-dimensions and allow $\mu \{0\}>0$. The
relationship between $a_{n}$, $p_{n}$ and the $\Phi $-dimension proves to be
somewhat intricate, often depending on the limiting behaviour of $\Phi (x)$
as it relates to $a_{n}$ and $p_{n}$.

\section{Basic Properties of the $\Phi $-Dimensions}

\label{sec:basic}

\subsection{Definitions}

\begin{defn}
A map $\Phi :(0,1)\mathbb{\rightarrow R}^{+}$ is called a \textbf{dimension
function} if $x^{1+\Phi (x)}$ decreases to $0$ as $x$ decreases to $0$. We
will write $\mathcal{D}$ for the space of all dimension functions.
\end{defn}

Special examples of dimension functions include the constant functions $\Phi
(x)=\delta \geq 0$ and the functions $\Phi (x)=1/|\log x|$ or $\left\vert
\log x\right\vert $. It is useful to observe that as $x^{1+\Phi (x)}\leq x$, 
$x^{1+\Phi (x)}\rightarrow 0$ as $x\rightarrow 0$ for any $\Phi \in \mathcal{%
D}$.

\begin{notation}
Given a bounded metric space $X,$ we denote the open ball centred at $x\in X$
with radius $R$ by $B(x,R)$. The notation $N_{r}(X)$ will mean the least
number of balls of radius $r$ that cover $X$. We write $\mathrm{diam} E$ for
the diameter of $E\subseteq X$.
\end{notation}

\begin{notation}
\label{N:comparable}When we write $f\sim g$ we mean there are constants $%
a,b>0$ such that $af(x)\leq g(x)\leq bf(x)$ for all $x$ in the domain of the
functions $f,g$. When we write $f\preceq g$ we mean there is a constant $c$
such that $f(x)\leq cg(x)$ for all $x$.
\end{notation}

\begin{defn}[Garci\'{a}, Hare, Mendivil \protect\cite{GHM}]
\label{DefnPhidim} Let $\Phi $ be a dimension function. The \textbf{upper }%
and \textbf{lower }$\Phi $\textbf{-dimensions\ }of $E\subseteq X$ are given
by 
\begin{equation*}
\overline{\dim }_{\Phi }E=\inf \left\{ 
\begin{array}{c}
\alpha :(\exists C_{1},C_{2}>0)(\forall 0<r\leq R^{1+\Phi (R)}\leq R<C_{1})%
\text{ } \\ 
N_{r}(B(z,R)\bigcap E)\leq C_{2}\left( \frac{R}{r}\right) ^{\alpha }\text{ }%
\forall z\in E%
\end{array}%
\right\}
\end{equation*}%
and 
\begin{equation*}
\underline{\dim }_{\Phi }E=\sup \left\{ 
\begin{array}{c}
\alpha :(\exists C_{1},C_{2}>0)(\forall 0<r\leq R^{1+\Phi (R)}\leq R<C_{1})%
\text{ } \\ 
N_{r}(B(z,R)\bigcap E)\geq C_{2}\left( \frac{R}{r}\right) ^{\alpha }\text{ }%
\forall z\in E%
\end{array}%
\right\} .
\end{equation*}
\end{defn}

\begin{remark}
{\ }

\begin{enumerate}
\item The \textbf{upper Assouad} and \textbf{lower Assouad dimensions }of $E$%
, \cite{A2, L1}, and denoted $\dim _{A}E$ and $\dim _{L}E$, are the special
cases of the upper and lower $\Phi $-dimensions with $\Phi =0$.

\item If we let $\Phi _{\theta }(x)=$ $1/\theta -1$ for all $x\,,$ then the
upper and lower $\Phi _{\theta }$-dimensions are (slight modifications) of
the \textbf{upper} and \textbf{lower $\theta $-Assouad spectrum} introduced
in \cite{FY}.

\item The \textbf{upper} and \textbf{lower quasi-Assouad dimensions},
denoted $\dim _{qA}E$ and $\dim _{qL}E$ and introduced in \cite{CDW, LX},
can be defined as the limit as $\theta \rightarrow 1$ of the upper and lower 
$\Phi _{\theta }$-dimensions, respectively.
\end{enumerate}
\end{remark}

A metric space $X$ has finite upper Assouad dimension if and only if it is
doubling, meaning there is a constant $M$ such that any ball in $X$ of
radius $R$ can be covered by at most $M$ balls of radius $R/2$, \cite{He}.
The space $X$ has positive lower Assouad dimension if and only if it is
uniformly perfect, meaning there is a constant $c>0$ so that $%
B(z,r)\diagdown B(z,cr)\neq \emptyset $ whenever $z\in X$ and $R$ is at most
the diameter of $X$, \cite{KLV}.

By a measure we will always mean a Borel probability measure on the metric
space $X$. The analogues of the Assouad dimensions for measures (also known
as the upper and lower regularity dimensions), the quasi-Assouad dimensions
and the $\theta $-Assouad spectrum for measures have been extensively
studied, c.f., \cite{FH, FK, FT, HHT, HT, KL, KLV}. Motivated by these
notions, we introduce the larger class of upper and lower $\Phi $-dimensions
for measures.

\begin{defn}
\label{PhiDimMeas} Let $\Phi $ be a dimension function and let $\mu $ be a
measure on the metric space $X$.

The \textbf{upper }and \textbf{lower }$\Phi $\textbf{-dimensions\ }of $\mu $
are given by 
\begin{equation*}
\overline{\dim }_{\Phi }\mu =\inf \left\{ 
\begin{array}{c}
\alpha :\text{ }(\exists \text{ }C_{1},C_{2}>0)(\forall 0<r<R^{1+\Phi
(R)}\leq R\leq C_{1})\text{ } \\ 
\frac{\mu (B(x,R))}{\mu (B(x,r))}\leq C_{2}\left( \frac{R}{r}\right)
^{\alpha }\text{ }\forall x\in \mathrm{supp} \mu%
\end{array}%
\right\}
\end{equation*}%
and 
\begin{equation*}
\underline{\dim }_{\Phi }\mu =\sup \left\{ 
\begin{array}{c}
\alpha :\text{ }(\exists \text{ }C_{1},C_{2}>0)(\forall 0<r<R^{1+\Phi
(R)}\leq R\leq C_{1}) \\ 
\frac{\mu (B(x,R))}{\mu (B(x,r))}\geq C_{2}\left( \frac{R}{r}\right)
^{\alpha }\text{ }\forall x\in \mathrm{supp} \mu%
\end{array}%
\right\} .
\end{equation*}
\end{defn}

\begin{remark}
{\ }

\begin{enumerate}
\item The \textbf{upper} and \textbf{lower Assouad dimensions} of $\mu $,
introduced by K\"{a}enm\"{a}ki et al in \cite{KL, KLV} and Fraser and
Howroyd in \cite{FH}, and denoted $\dim _{A}\mu $ and $\dim _{L}\mu $
respectively, are the upper and lower $\Phi $-dimensions with $\Phi $ the
constant function $0$.

\item If we let $\Phi _{\theta }=1/\theta -1,$ then $\overline{\dim }_{\Phi
_{\theta }}\mu $ and \underline{$\dim $}$_{\Phi _{\theta }}\mu $ are the 
\textbf{upper} and \textbf{lower $\theta$-Assouad spectrum}. The \textbf{%
upper} and \textbf{lower quasi-Assouad dimensions} of $\mu $ are given by 
\begin{equation*}
\dim _{qA}\mu =\lim_{\theta \rightarrow 1}\overline{\dim }_{\Phi _{\theta
}}\mu \text{, }\dim _{qL}\mu =\lim_{\theta \rightarrow 1}\underline{\dim }%
_{\Phi _{\theta }}\mu .
\end{equation*}%
See \cite{HHT, HT}.
\end{enumerate}
\end{remark}

To be precise, the $\theta $-Assouad spectrum of a set $E$, as introduced in 
\cite{FY}, only required consideration of $r=R^{1/\theta }$. However, it was
shown in \cite{FHHTY} that if we denote this dimension by $\overline{\dim }%
^{=\theta }E,$ then $\overline{\dim }_{\Phi _{\theta }}E=\sup_{\psi \leq
\theta }\overline{\dim }^{=\psi }E$. The analogous statements were proved
for the lower $\theta $-Assouad spectrum of sets in \cite{CWC} and for the
upper and lower $\theta $-Assouad spectrum of measures in \cite{HT}.

We note that the same proof as given for sets in \cite[Prop. 2.15]{GHM}
shows that given any measure $\mu ,$ there are dimension functions $\Phi
,\Psi $ such that $\overline{\dim }_{\Phi }\mu =\dim _{qA}\mu $ and $%
\underline{\dim }_{\Psi }\mu =\dim _{qL}\mu $.

\begin{remark}
\label{up}
{\ }
\begin{enumerate}
\item \label{up 1} It is known (see \cite{FH}) that a measure has finite
upper Assouad dimension if and only if it is \textbf{doubling,} meaning
there is a constant $C$ such that 
\begin{equation*}
\mu (B(z,2R))\leq C\mu (B(z,R))\text{ for all }R\leq \mathrm{diam} X,\text{ }%
z\in \mathrm{supp} \mu .
\end{equation*}

\item \label{up 2} If a measure has an atom, then all of its lower $\Phi $%
-dimensions are $0$. More generally, a measure $\mu $ has positive lower
Assouad dimension if and only if $\mu $ is\textbf{\ uniformly perfect} (see 
\cite{HT, KL}) which means there are positive constants $a,b$ such that $\mu
(B(z,R)\diagdown B(z,aR))\geq b\mu (B(z,R))$ for all $z\in \mathrm{supp} \mu 
$ and $R\leq \mathrm{diam} X $ or, equivalently, there are constants $c>1$
and $a>0$ such that 
\begin{equation*}
\mu (B(z,R))\geq c\mu (B(z,aR)\text{ for all }R\leq \mathrm{diam} X,\text{ }%
z\in \mathrm{supp} \mu .
\end{equation*}
\end{enumerate}
\end{remark}

\subsection{Preliminary results}

Here are some easy facts about these dimensions.

\begin{proposition}
{\ } \label{P:basic}

\begin{enumerate}
\item \label{P:basic 1} For all dimension functions $\Phi ,$%
\begin{equation}
\dim _{L}\mu \leq \underline{\dim }_{\Phi }\mu \leq \overline{\dim }_{\Phi
}\mu \leq \dim _{A}\mu  \label{R1}
\end{equation}%
and 
\begin{equation}
\overline{\dim }_{\Phi }\mu \geq \overline{\dim }_{\Phi }\ supp \mu .
\label{R2}
\end{equation}%
If $\mu $ is doubling, then 
\begin{equation}
\underline{\dim }_{\Phi }\mu \leq \underline{\dim }_{\Phi }\mathrm{supp} \mu
.  \label{R3}
\end{equation}

\item \label{P:basic 2} If $\Phi (x)\leq \Psi (x)$ for all $x>0$, then 
\begin{equation}
\overline{\dim }_{\Psi }\mu \text{ }\leq \overline{\dim }_{\Phi }\mu \text{
and }\underline{\dim }_{\Phi }\mu \leq \underline{\dim }_{\Psi }\mu .
\label{R4}
\end{equation}

\item \label{P:basic 3} If $\Phi (x)\rightarrow 0$ as $x\rightarrow 0,$ then 
\begin{equation}
\underline{\dim }_{\Phi }\mu \leq \dim _{qL}\mu \text{ and }\dim _{qA}\mu
\leq \overline{\dim }_{\Phi }\mu \text{ . }  \label{R5}
\end{equation}

\item \label{P:basic 4} If there exists $x_{0}>0$ such that $\Phi (x)\leq
C/\left\vert \log x\right\vert $ for $0<x\leq x_{0},$ then $\overline{\dim }%
_{\Phi }\mu =\dim _{A}\mu $ and $\underline{\dim }_{\Phi }\mu =\dim _{L}\mu $%
.
\end{enumerate}
\end{proposition}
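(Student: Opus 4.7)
My plan is to treat the four items in turn. For Part (i), the outer inequalities $\dim_L \mu \leq \underline{\dim}_\Phi \mu$ and $\overline{\dim}_\Phi \mu \leq \dim_A \mu$ in (R1) are tautological, since the $\Phi$-condition imposes the ratio bound only on a subset of the $(r,R)$-pairs required for the Assouad condition. For the middle inequality, I would argue by contradiction: if $\overline{\dim}_\Phi \mu < \alpha < \beta < \underline{\dim}_\Phi \mu$, then combining the resulting upper and lower ratio bounds gives $(R/r)^{\beta - \alpha} \leq C$ for all admissible pairs, which fails as $r \to 0$ with $R$ fixed in $(0, C_1)$.

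For (R2), fix $z \in \supp \mu$ and take a maximal $r$-separated set $\{y_j\}_{j=1}^N \subseteq B(z,R) \cap \supp \mu$. The balls $B(y_j, r/2)$ are pairwise disjoint, each contained in $B(z, 2R)$, and dually $B(z, 2R) \subseteq B(y_j, 3R)$. Applying the upper $\Phi$-measure bound at the centre $y_j \in \supp \mu$ with radii $r/2 < 3R$---whose admissibility $r/2 \leq (3R)^{1+\Phi(3R)}$ follows from $r \leq R^{1+\Phi(R)}$ and the monotonicity of $x \mapsto x^{1+\Phi(x)}$---gives $\mu(B(y_j, r/2)) \geq c(r/R)^\alpha \mu(B(z, 2R))$. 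Summing the disjoint masses, bounding the total by $\mu(B(z, 2R))$, and cancelling this positive quantity yields $N \leq C(R/r)^\alpha$, which controls $N_r(B(z,R) \cap \supp \mu)$ and hence (R2). For (R3), I would instead cover $B(z,R) \cap \supp \mu$ by $N_r$ balls $B(w_j, r)$ with $w_j \in \supp \mu$, apply the lower $\Phi$-bound at each $w_j$ to estimate $\mu(B(w_j, r)) \leq c(r/R)^\alpha \mu(B(w_j, R)) \leq c(r/R)^\alpha \mu(B(z, 2R))$, sum over $j$, and invoke doubling $\mu(B(z, 2R)) \leq D\mu(B(z,R))$ to isolate the desired lower bound $N_r \geq C(R/r)^\alpha$.

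Parts (ii) and (iii) are routine: (ii) follows because for $R < 1$ the hypothesis $\Phi(R) \leq \Psi(R)$ gives $R^{1+\Psi(R)} \leq R^{1+\Phi(R)}$, so the $\Phi$-admissible range contains the $\Psi$-admissible range and the $\Phi$-condition is stronger. For (iii), given $\theta < 1$, the hypothesis $\Phi(x) \to 0$ permits shrinking $C_1$ until $\Phi(x) \leq \Phi_\theta = 1/\theta - 1$ on $(0, C_1]$, whereupon (ii) applied on this range yields $\overline{\dim}_{\Phi_\theta}\mu \leq \overline{\dim}_\Phi \mu$ and $\underline{\dim}_{\Phi_\theta}\mu \geq \underline{\dim}_\Phi \mu$; passing to the limit $\theta \to 1$ finishes both inequalities in (R5). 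For (iv), the hypothesis $\Phi(x) \leq C/|\log x|$ gives $R^{1+\Phi(R)} = R \cdot \exp(-\Phi(R)|\log R|) \geq e^{-C} R$, so the $\Phi$-admissible range and the Assouad-admissible range differ only by a universal constant factor. For $r \in (R^{1+\Phi(R)}, R]$ the quotient $R/r$ is bounded by $e^C$, so I would extend the upper $\Phi$-ratio bound to all $0<r<R$ by replacing $r$ by $R^{1+\Phi(R)}$ in the denominator (which only enlarges the ratio) and absorbing the constant $e^{C\alpha}$, and similarly extend the lower bound via the trivial inequality $\mu(B(x,R))/\mu(B(x,r)) \geq 1 \geq e^{-C\alpha}(R/r)^\alpha$. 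Combined with (R1) this yields the equalities $\overline{\dim}_\Phi \mu = \dim_A \mu$ and $\underline{\dim}_\Phi \mu = \dim_L \mu$.

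The main obstacle is the change-of-centre argument in (R2) and (R3): translating a ratio bound on $\mu$-balls centred at $y_j$ or $w_j$ into a covering-number bound around $z$ requires first verifying that the admissibility condition $r \leq R^{1+\Phi(R)}$ survives the dilation to $(3R)^{1+\Phi(3R)}$, which is exactly where the monotonicity embedded in the definition of dimension function is used, and then controlling the residual factor $\mu(B(z,2R))/\mu(B(z,R))$ left over after summation. In (R2) this factor cancels because the enclosing ball $B(z,2R)$ appears on both sides of the pigeonhole inequality; in (R3) no such cancellation occurs, and the doubling hypothesis is exactly what is needed to absorb it into the constant.
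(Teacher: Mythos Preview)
Your proposal is correct and follows the same strategy as the paper, though you supply explicit details for (R2) and (R3) where the paper simply defers to the arguments of \cite{FH, HHT} and \cite{HT, KL}; your separated-set/covering arguments are precisely those standard arguments, and your treatment of (iv) matches the paper's short computation almost verbatim.
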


\begin{proof}
The first statement in \ref{P:basic 1} is obvious. For the second, we remark
that it was shown \cite{FH, HHT} that $\overline{\dim }_{\Phi }\mathrm{supp}
\mu $ is dominated by both $\dim _{A}\mu $ and $\dim _{qA}\mu $. The same
arguments work here for the upper $\Phi $-dimensions. For the lower $\Phi $%
-dimensions the arguments are similar to those found in \cite{HT, KL} for
the special cases of the (quasi-) lower Assouad dimensions.

To prove the claims of \ref{P:basic 4}, it is enough to study $\mu
(B(z,R))/\mu (B(z,r))$ for $R\geq r\geq R^{1+\Phi (R)}\geq e^{-C}R$. For
such $r,$ we have%
\begin{equation*}
1\leq \frac{\mu (B(z,R))}{\mu (B(z,r))}\leq \frac{\mu (B(z,R))}{\mu
(B(z,R^{1+\Phi (R)}))}.
\end{equation*}%
Since $R/r\sim 1,$ it follows that the Assouad and $\Phi $-dimensions
coincide.

Statements \ref{P:basic 2} and \ref{P:basic 3} follow easily from the
definitions.
\end{proof}

\begin{remark}
The inequalities of \eqref{R1}-\eqref{R5} can all be strict. In \cite{HHT},
examples are given to show that $\overline{\dim }_{(q)A}\mu >\overline{\dim }%
_{(q)A}\mathrm{supp} \mu $. Similar examples can be constructed for the
upper and lower $\Phi $-dimensions to see the strictness in \eqref{R2} and %
\eqref{R3}. In \cite[Theorem 3.5]{GHM}, formulas are given for the upper and
lower $\Phi $-dimensions of central Cantor sets. Using these formulas many
examples are given there to illustrate the strictness of the analogues of
the inequalities in \eqref{R1}, \eqref{R4} and \eqref{R5} when the measure $%
\mu $ is replaced by the Cantor set $E$. However, if $\mu $ is the uniform
Cantor measure on the Cantor set $E,$ then the upper or lower $\Phi $%
-dimension of $\mu $ coincides with that of $E$.

In fact, it is shown in \cite{GHM} that given any $0<\alpha <\beta <1,$
there is a central Cantor set $E\subseteq \lbrack 0,1]$ with 
\begin{equation*}
\{\overline{\dim }_{\Phi }E:\Phi \in \mathcal{D}\text{, }\lim_{x\rightarrow
0}\Phi (x)=0\}=[\alpha ,\beta ]=[\dim _{qA}E,\dim _{A}E].
\end{equation*}%
Taking $\mu $ to be the uniform Cantor measure on this Cantor set gives the
same interval for the set of upper $\Phi $-dimensions of $\mu $ for $\Phi
\rightarrow 0$.

Many other examples illustrating the strictness of these inequalities are
given throughout this paper.
\end{remark}

We recall the definition of the local dimension of a measure.

\begin{defn}
The \textbf{lower local dimension} of a measure $\mu $ at a point $z$ in its
support is defined as 
\begin{equation*}
\underline{\dim }_{\mathrm{loc}}\mu (z)=\liminf_{r\rightarrow 0}\frac{\log
\mu (B(z,r))}{\log r}.
\end{equation*}%
By replacing $\liminf $ with $\limsup $ we obtain the \textbf{upper local
dimension} and if the lower and upper local dimensions are equal, then we
call the quantity the \textbf{local dimension} of $\mu $ at $z$.
\end{defn}

\begin{proposition}
For any dimension function $\Phi $, \label{basic}%
\begin{equation}
\underline{\dim }_{\Phi }\mu \leq \inf_{z\in \mathrm{supp} \mu }\underline{%
\dim }_{\mathrm{loc}}\mu (z)\leq \dim _{H}\mu \leq \sup_{z\in \mathrm{supp}
\mu }\overline{\dim }_{\mathrm{loc}}\mu (z)\leq \overline{\dim }_{\Phi }\mu .
\label{locdimorder}
\end{equation}
\end{proposition}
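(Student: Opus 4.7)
The strategy is to split the chain of inequalities into two outer pieces (which use the $\Phi$-dimension definitions directly) and the middle pieces (which are the classical relation between pointwise local dimensions and Hausdorff dimension). The outer inequalities are the substantive content; the middle ones are invoked as standard facts.

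For the rightmost inequality, I would fix any $\alpha > \overline{\dim}_\Phi \mu$ and extract the associated constants $C_1, C_2$ from Definition \ref{PhiDimMeas}. Choose some fixed scale $R_0 \leq C_1$ with $\mu(B(z,R_0)) > 0$ (it exists since $z \in \supp\mu$). Then for every $r < R_0^{1+\Phi(R_0)}$, the $\Phi$-dimension inequality applied with $R=R_0$ yields
\begin{equation*}
\mu(B(z,r)) \geq \frac{\mu(B(z,R_0))}{C_2 R_0^\alpha}\, r^\alpha .
\end{equation*}
Taking logarithms and dividing by $\log r < 0$ (which flips the inequality) gives
\begin{equation*}
\frac{\log \mu(B(z,r))}{\log r} \leq \alpha + \frac{\log K(z)}{\log r},
\end{equation*}
with $K(z)$ depending only on $z$ and $R_0$. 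Letting $r \to 0$ one obtains $\overline{\dim}_{\mathrm{loc}} \mu(z) \leq \alpha$ for every $z \in \supp\mu$; taking the sup over $z$ and then the infimum over such $\alpha$ produces $\sup_z \overline{\dim}_{\mathrm{loc}} \mu(z) \leq \overline{\dim}_\Phi \mu$. The leftmost inequality is the mirror image: for $\beta < \underline{\dim}_\Phi \mu$ the reverse comparison $\mu(B(z,r)) \leq \mu(B(z,R_0)) C_2^{-1} (r/R_0)^\beta$ leads, after the same log manipulation, to $\underline{\dim}_{\mathrm{loc}} \mu(z) \geq \beta$ for every $z$.

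For the middle two inequalities I would cite the standard mass distribution / Billingsley principle: if $\underline{\dim}_{\mathrm{loc}} \mu(z) \geq \alpha$ for $\mu$-a.e.\ $z$ then $\dim_H \mu \geq \alpha$, and if $\overline{\dim}_{\mathrm{loc}} \mu(z) \leq \alpha$ for $\mu$-a.e.\ $z$ then $\dim_H \mu \leq \alpha$. Applying these with $\alpha$ chosen just below $\inf_z \underline{\dim}_{\mathrm{loc}} \mu(z)$ and just above $\sup_z \overline{\dim}_{\mathrm{loc}} \mu(z)$ respectively yields the two inner inequalities.

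The only subtlety to watch for is the sign conventions when dividing by $\log r$, together with the fact that one needs at least one scale $R_0 \leq C_1$ where $\mu(B(z,R_0)) > 0$; for $z \in \supp \mu$ this is automatic. Beyond that the argument is routine — the definition of $\overline{\dim}_\Phi \mu$ and $\underline{\dim}_\Phi \mu$ is essentially a two-scale density condition that, once one scale is frozen, becomes a one-scale bound on $\mu(B(z,r))$ and hence directly controls the local dimension.
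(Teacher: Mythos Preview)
Your proof is correct and follows essentially the same route as the paper: freeze one large scale $R$, use the defining inequality of $\overline{\dim}_\Phi\mu$ (resp.\ $\underline{\dim}_\Phi\mu$) to get a one-scale bound $\mu(B(z,r))\gtrsim r^\alpha$ (resp.\ $\lesssim r^\beta$), take logarithms, divide by $\log r$, and let $r\to 0$; the middle inequalities are cited as standard. The only cosmetic difference is that the paper writes $d+\varepsilon$ with $d=\overline{\dim}_\Phi\mu$ where you write $\alpha>\overline{\dim}_\Phi\mu$.
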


\begin{proof}
The middle inequalities around $\dim _{H}\mu $ are standard, see \cite[ch. 10%
]{Fa}. The inequality $\sup_{z}\overline{\dim }_{\mathrm{loc}}\mu (z)\leq 
\overline{\dim }_{\Phi }\mu $ obviously holds if $d=\overline{\dim }_{\Phi
}\mu =\infty $, so assume $d < \infty$. Fix $\varepsilon >0$ and choose $%
C_{1}$ such that for all $r\leq R^{1+\Phi (R)}$ $\leq $ $R\leq C_{1}$ we
have 
\begin{equation*}
\frac{\mu (B(z,R))}{\mu (B(z,r))}\leq \left( \frac{R}{r}\right)
^{d+\varepsilon }.
\end{equation*}%
Taking logarithms and dividing by $-\log r$, we see that 
\begin{equation*}
\frac{\log (\mu (B(z,R))}{-\log r}+\frac{\log (\mu (B(z,r))}{\log r}\leq
(d+\varepsilon )\frac{\log R}{-\log r}+d+\varepsilon .
\end{equation*}%
Keeping $R$ fixed and letting $r\rightarrow 0$ gives%
\begin{equation*}
\overline{\lim }_{r\rightarrow 0}\frac{\log (\mu (B(z,r)))}{\log r}\leq
d+\varepsilon .
\end{equation*}%
That proves $\sup_{z}\overline{\dim }_{\mathrm{loc}}\mu (z)\leq d$. The
argument for the lower $\Phi $-dimension is similar.
\end{proof}

Here is an example illustrating strictness in (\ref{locdimorder}).

\begin{example}[ Examples where $\overline{\dim }_{\Phi }\protect\mu %
>\sup_{z} \overline{\dim }_{\mathrm{loc}}\protect\mu (z)$ or $\protect%
\underline{\dim } _{\Phi }\protect\mu <\inf_{z}\protect\underline{\dim }_{%
\mathrm{loc}}\protect\mu (z)$]
\label{ex:loc vs Phi} We will construct probability measures with support in 
$[0,1]$ by specifying the measure of each of the triadic subintervals of $%
[0,1]$.

Consider a level $n,$ triadic subinterval, $[a/3^{n},(a+1)/3^{n}]$  for
integers $a \in \{ 0,1, \dots, 3^n-1\}$. This can be decomposed into 3
subintervals of level $n+1$, namely $[3a/3^{n+1},(3a+1)/3^{n+1}]$, $%
[(3a+1)/3^{n+1},(3a+2)/3^{n+1}]$ and $[(3a+2)/3^{n+1},3(a+1)/3^{n+1}]$. We
will call these the left child, the middle child and the right child,
respectively, of the original parent interval. We will define the measures
by proscribing the ratio of the measure of a child with the measure of the
parent.

We begin by choosing an increasing sequence $\{n_{j}\}$, with $n_{j+1}\gg
n_{j}$. 
Let $M_{n_{j}}:=[3^{-n_{j}},2\cdot 3^{-n_{j}}]$. Inductively, let $%
M_{n_{j}+k+1}$ be the middle child of $M_{n_{j}+k}$ for $k=0,1,\dots ,n_{j}$
and let $L_{n_{j}+k+1}$ (resp. $R_{n_{j}+k+1}$) be the left (resp. right)
child of $M_{n_{j}+k}$.

Given a sequence $\{p_{j}\}_{j=1}^{\infty }$ with $0\leq p_{j}\leq 1$, we
define the measure $\mu _{\{p_{j}\}}=\mu $ by the rule that the ratio of the 
$\mu $-measure of the middle child $M_{n_{j}+k+1}$ to the measure of its
parent is $p_{j}$ for $k=0,\dots ,n_{j}$ and the ratio of the $\mu $-measure
of the left or right child to the measure of its parent is $\frac{1-p_{j}}{2}
$. Thus, for $k=0,1,...,n_{j}$ 
\begin{equation*}
\mu (M_{n_{j}+k+1})=p_{j}\mu (M_{n_{j}+k})\text{ }
\end{equation*}%
and 
\begin{equation*}
\mu (L_{n_{j}+k+1})=\mu (R_{n_{j}+k+1})=\left( \frac{1-p_{j}}{2}\right) \mu
(M_{n_{j}+k}).
\end{equation*}%
For all other children of all other parents we set the ratio of the measure
of the child to the parent to be $1/3$.

One can see that for any nested sequence of triadic intervals the ratio of
the measure of a parent to a child is 1/3, except possibly a finite number
of times. This gives us that 
\begin{equation*}
\dim _{\mathrm{loc}}\mu (z)=1\text{ for all }z\in \lbrack 0,1].
\end{equation*}

Put $\Phi (x)=1$. Let $x_{j} = \frac{1}{2} 3^{-n_j +1}$ be the midpoint of
the triadic subinterval $M_{n_{j}}=[3^{-n_{j}},2\cdot 3^{-n_{j}}]$. Put $%
R_{j}=3^{-n_{j}}/2$ and $r_{j}=3^{-2n_{j}}/6$. We note that $\Phi (R_{j})=1$%
, thus $R_{j}^{1+\Phi (R_{j})}=3^{-2n_{j}}/4=3r_{j}/2$. As $B(x_{j},r_{j})$
is a triadic interval at level $2n_{j}+1$, and it and all its ancestors back
to level $n_{j}+1$ are middle children, we see that 
\begin{equation*}
\frac{\mu (B(x_{j},R_{j}))}{\mu (B(x_{j},r_{j}))}=\left( \frac{1}{p_{j}}%
\right) ^{n_{j}+1}=\left( 3^{n_{j}+1}\right) ^{-\log _{3}p_{j}}.
\end{equation*}%
Since $R_{j}/r_{j}=3^{n_{j}+1}$, it follows that%
\begin{equation*}
\overline{\dim }_{\Phi }\mu \geq \max (1,-\log _{3}(\liminf_{j}p_{j})).
\end{equation*}%
In a similar fashion, we have that 
\begin{equation*}
\underline{\dim }_{\Phi }\mu \leq \min (1,-\log _{3}(\limsup_{j}p_{j})).
\end{equation*}%
In fact, we have equality in both cases, as similar reasoning shows. We
leave these details to the reader.

Here are some explicit examples.

\begin{itemize}
\item If $\liminf_{j}p_{j}=1/4$ then 
\begin{equation*}
1=\sup_{z\in \mathrm{supp} \mu }\overline{\dim }_{\mathrm{loc}}\mu (z)<%
\overline{\dim }_{\Phi }\mu =\log 4/\log 3.
\end{equation*}

\item If $\liminf_{j}p_{j}=0$ then 
\begin{equation*}
1=\sup_{z\in \mathrm{supp} \mu }\overline{\dim }_{\mathrm{loc}}\mu (z)<%
\overline{\dim }_{\Phi }\mu =\infty .
\end{equation*}

\item If $\limsup_{j}p_{j}=1/2$ then 
\begin{equation*}
\log 2/\log 3=\underline{\dim }_{\Phi }\mu <\inf_{z\in \mathrm{supp} \mu }%
\underline{\dim }_{\mathrm{loc}}\mu (z)=1
\end{equation*}

\item If $\limsup_{j}p_{j}=1$ then 
\begin{equation*}
0=\underline{\dim }_{\Phi }\mu <\inf_{z\in \mathrm{supp}\mu }\underline{\dim 
}_{\mathrm{loc}}\mu (z)=1
\end{equation*}
\end{itemize}
\end{example}

\subsection{Comparing $\Phi $-Dimensions}

As commented earlier, it is immediate from the definition that if $\Phi \geq
\Psi ,$ then $\overline{\dim }_{\Phi }\mu \leq \overline{\dim }_{\Psi }\mu $
and conversely for the lower dimensions. If we know more about the relative
sizes of $\Phi $ and $\Psi ,$ more can be said about the corresponding
dimensions.

\begin{proposition}
\label{comparison}Let $\Phi ,\Psi \in \mathcal{D}$. Suppose there are
constants $0<\lambda <1$ and $x_{0}>0$ such that 
\begin{equation*}
\Phi (x)\geq \lambda \Psi (x)\text{ for all }0<x\leq x_{0}\text{.}
\end{equation*}%
Then for any measure $\mu $ on $E$ we have

\begin{enumerate}
\item \label{comparision 1} $\overline{\dim }_{\Psi }\mu \geq \lambda 
\overline{\dim }_{\Phi }\mu $ and $\underline{\dim }_{\Psi }\mu \leq 
\underline{\dim }_{\Phi }\mu +(1-\lambda )\dim _{A}\mu ;$

\item \label{comparison 2} $\overline{\dim }_{\Psi }E\geq \lambda \overline{%
\dim }_{\Phi }E$ and $\underline{\dim }_{\Psi }E\leq \underline{\dim }_{\Phi
}E+(1-\lambda )\dim _{A}E.$
\end{enumerate}
\end{proposition}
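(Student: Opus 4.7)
The plan is to prove (i) and then note that (ii) follows from the identical case analysis, with $\mu(B(x,R))/\mu(B(x,r))$ replaced by $N_r(B(z,R)\cap E)$ and the standard two-scale chaining of covering numbers playing the role of the multiplicative factorisation of measure ratios. The key organising principle for both halves is a case split according to whether the given $r$ lies below or above the intermediate scale $R^{1+\Psi(R)}$, which interpolates between the cutoff for $\Phi$ and the range where the $\Psi$-hypothesis is directly applicable.

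For the upper dimension, fix $\alpha > \overline{\dim}_\Psi\mu$ and show that $\alpha/\lambda$ is admissible for $\overline{\dim}_\Phi\mu$. Given $0 < r \leq R^{1+\Phi(R)} \leq R$ with $R \leq x_0$: if $r \leq R^{1+\Psi(R)}$, the $\Psi$-bound gives $\mu(B(x,R))/\mu(B(x,r)) \leq C_2(R/r)^\alpha \leq C_2(R/r)^{\alpha/\lambda}$ since $R/r \geq 1$. Otherwise, monotonicity of $\mu$ lets one replace $r$ by $R^{1+\Psi(R)}$, yielding $\mu(B(x,R))/\mu(B(x,r)) \leq C_2 R^{-\alpha\Psi(R)}$; the hypothesis $\Phi \geq \lambda\Psi$ then bounds this by $C_2 R^{-\alpha\Phi(R)/\lambda} \leq C_2(R/r)^{\alpha/\lambda}$, using $R/r \geq R^{-\Phi(R)}$. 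Letting $\alpha \searrow \overline{\dim}_\Psi\mu$ gives $\overline{\dim}_\Phi\mu \leq \overline{\dim}_\Psi\mu/\lambda$, as required.

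The lower dimension is the harder direction. Assume $\dim_A\mu < \infty$ (else the claim is vacuous), fix $\alpha < \underline{\dim}_\Psi\mu$ and $s > \dim_A\mu$, and set $\gamma := \alpha - (1-\lambda)s$, which we may assume is positive. In the easy case $r \leq R^{1+\Psi(R)}$, the $\Psi$-lower bound gives the required estimate with exponent $\alpha \geq \gamma$ since $R/r \geq 1$. In the remaining case $R^{1+\Psi(R)} < r \leq R^{1+\Phi(R)}$, chain through the scale $R^{1+\Psi(R)}$:
$$\frac{\mu(B(x,R))}{\mu(B(x,r))} = \frac{\mu(B(x,R))}{\mu(B(x,R^{1+\Psi(R)}))} \cdot \frac{\mu(B(x,R^{1+\Psi(R)}))}{\mu(B(x,r))},$$
lower-bounding the first factor by the $\Psi$-lower bound and the second by the reciprocal of the Assouad upper bound (with exponent $s$). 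This produces an estimate of the form $\tfrac{C}{C'} R^{s+(s-\alpha)\Psi(R)} r^{-s}$; comparing against $(R/r)^\gamma$ and using $r \leq R^{1+\Phi(R)}$ together with $s > \gamma$ reduces everything to the exponent inequality $(s-\alpha)\Psi(R) \leq \Phi(R)(s-\gamma)$. Using $\Phi \geq \lambda\Psi$, this is implied by $s - \alpha \leq \lambda(s-\gamma)$, i.e. $\gamma \leq (\alpha-(1-\lambda)s)/\lambda$; since $\gamma > 0$ and $\lambda < 1$, our choice $\gamma = \alpha-(1-\lambda)s$ satisfies this strictly. Letting $\alpha \nearrow \underline{\dim}_\Psi\mu$ and $s \searrow \dim_A\mu$ yields the claimed inequality after rearrangement.

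The main obstacle is the bookkeeping in the second case of the lower-dimension argument: one must correctly combine a lower bound coming from the $\Psi$-hypothesis with an upper bound coming from $\dim_A\mu$ at the intermediate scale $R^{1+\Psi(R)}$, and then verify the resulting exponent inequality against the relative growth rate of $\Phi$ and $\Psi$. The upper-dimension half is comparatively routine, and part (ii) follows by repeating the case split verbatim.
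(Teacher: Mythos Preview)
Your proof is correct and follows essentially the same approach as the paper: the same case split on whether $r$ lies below or above $R^{1+\Psi(R)}$, with monotonicity handling the easy case and a chain through an intermediate scale (combined with the Assouad upper bound) handling the hard case in the lower-dimension argument. The only cosmetic difference is that you argue directly by showing a derived exponent is admissible, whereas the paper argues via witness sequences, and you chain through $R^{1+\Psi(R)}$ rather than $R^{1+\Phi(R)}$; both choices lead to the same inequalities.
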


\begin{proof}
\ref{comparision 1} We begin with the upper dimensions. We can assume $%
\overline{\dim }_{\Phi }\mu >0$ for otherwise there is nothing to prove.
Choose positive real numbers $\alpha _{n}\nearrow $ $\overline{\dim }_{\Phi
}\mu $, $x_{n}\in \mathrm{supp} \mu ,R_{n}\rightarrow 0$ and $r_{n}\leq
R_{n}^{1+\Phi (R_{n})}$ such that 
\begin{equation*}
\frac{\mu (B(x_{n},R_{n}))}{\mu (B(x_{n},r_{n}))}\geq \left( \frac{R_{n}}{%
r_{n}}\right) ^{\alpha _{n}}.
\end{equation*}

If there is a subsequence $(n_{j})$ such that $r_{n_{j}}\leq
R_{n_{j}}^{1+\Psi (R_{n_{j}})},$ then it is clear that $\overline{\dim }%
_{\Psi }\mu \geq \sup_{j}\alpha _{n_{j}}=\overline{\dim }_{\Phi }\mu $.
Otherwise, for all but finitely many $n$ we have%
\begin{equation*}
R_{n}^{1+\Psi (R_{n})}\leq r_{n}\leq R_{n}^{1+\Phi (R_{n})}.
\end{equation*}%
Hence%
\begin{eqnarray*}
\frac{\mu (B(x_{n},R_{n}))}{\mu (B(x_{n},R_{n}^{1+\Psi (R_{n})}))} &\geq &%
\frac{\mu (B(x_{n},R_{n}))}{\mu (B(x_{n},r_{n}))}\geq \left( \frac{R_{n}}{%
r_{n}}\right) ^{\alpha _{n}}\geq R_{n}^{-\Phi (R_{n})\alpha _{n}} \\
&=&R_{n}^{-\Psi (R_{n})\left( \frac{\Phi (R_{n})}{\Psi (R_{n})}\alpha
_{n}\right) }\geq R_{n}^{-\Psi (R_{n})\lambda \alpha _{n}}
\end{eqnarray*}%
and this implies that $\overline{\dim }_{\Psi }\mu \geq \lambda \overline{%
\dim }_{\Phi }\mu $.

Now we consider the lower dimensions. We can assume $\underline{\dim }_{\Phi
}\mu <\infty $ and $\dim _{A}\mu <\infty $. Suppose $x_{n}\in \mathrm{supp}
\mu ,R_{n}\rightarrow 0$ and $r_{n}\leq R_{n}^{1+\Phi (R_{n})}$ are chosen
such that 
\begin{equation*}
\frac{\mu (B(x_{n},R_{n}))}{\mu (B(x_{n},r_{n}))}\leq \left( \frac{R_{n}}{%
r_{n}}\right) ^{\alpha _{n}}
\end{equation*}%
where $\alpha _{n}\searrow \underline{\dim }_{\Phi }\mu ,$ and again assume
that for all but finitely many $n$ we have%
\begin{equation*}
R_{n}^{1+\Psi (R_{n})}\leq r_{n}\leq R_{n}^{1+\Phi (R_{n})}.
\end{equation*}%
Then, for any $\varepsilon >0$ and small enough $R_{n},$ we have%
\begin{eqnarray*}
\frac{\mu (B(x_{n},R_{n}))}{\mu (B(x_{n},R_{n}^{1+\Psi (R_{n})}))} &=&\frac{%
\mu (B(x_{n},R_{n}))}{\mu (B(x_{n},R_{n}^{1+\Phi (R_{n})}))}\cdot \frac{\mu
(B(x_{n},R_{n}^{1+\Phi (R_{n})}))}{\mu (B(x_{n},R_{n}^{1+\Psi (R_{n})}))} \\
&\leq &C\frac{\mu (B(x_{n},R_{n}))}{\mu (B(x_{n},r_{n}))}R_{n}^{(\Phi
(R_{n})-\Psi (R_{n}))(\dim _{A}\mu +\varepsilon )} \\
&\leq &C\left( \frac{R_{n}}{r_{n}}\right) ^{\alpha _{n}}R_{n}^{-(1-\lambda
)\Psi (R_{n})(\dim _{A}\mu +\varepsilon )} \\
&\leq &CR_{n}^{-\Psi (R_{n})(\alpha _{n}+(1-\lambda )(\dim _{A}\mu
+\varepsilon ))},
\end{eqnarray*}%
and this obviously implies $\underline{\dim }_{\Psi }\mu \leq \underline{%
\dim }_{\Phi }\mu +(1-\lambda )\dim _{A}\mu $.

\ref{comparison 2} The proof for sets is essentially the same.
\end{proof}

We have the following corollaries as an immediate consequence.

\begin{corollary}
{\ } \label{Cor1}

\begin{enumerate}
\item \label{Cor1 1} If $\Phi (x)/\Psi (x)\rightarrow 1$ as $x\rightarrow 0$%
, then $\overline{\dim }_{\Phi }\mu =\overline{\dim }_{\Psi }\mu $. The same
statement holds for the lower dimensions if, in addition, $\mu $ is doubling.

\item \label{Cor1 2} If $\Phi (x)\rightarrow \theta \neq 0$ as $x\rightarrow
0,$ then $\overline{\dim }_{\Phi }\mu =\overline{\dim }_{\Phi _{\theta }}\mu 
$ where $\Phi _{\theta }$ is the constant function $\theta $.

\item \label{Cor1 3} If $\Phi \sim \Psi ,$ then $\overline{\dim }_{\Phi }\mu
<\infty $ if and only if $\overline{\dim }_{\Psi }\mu <\infty $. In
particular, if $\Phi $ and $\Psi $ are positive constant functions, then $%
\overline{\dim }_{\Phi }\mu <\infty $ if and only if $\overline{\dim }_{\Psi
}\mu <\infty $.
\end{enumerate}
\end{corollary}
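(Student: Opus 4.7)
The plan is to obtain all three parts as direct applications of Proposition \ref{comparison}, invoking it in each direction with an appropriate choice of $\lambda$.

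For \ref{Cor1 1}, I fix an arbitrary $\lambda \in (0,1)$. The hypothesis $\Phi(x)/\Psi(x) \to 1$ produces an $x_0 > 0$ on which $\Phi(x) \geq \lambda \Psi(x)$ and $\Psi(x) \geq \lambda \Phi(x)$ hold simultaneously, so two applications of Proposition \ref{comparison} (with the roles of $\Phi$ and $\Psi$ interchanged in the second) give $\overline{\dim}_\Psi \mu \geq \lambda \overline{\dim}_\Phi \mu$ and $\overline{\dim}_\Phi \mu \geq \lambda \overline{\dim}_\Psi \mu$. Letting $\lambda \to 1^-$ yields the claimed equality of upper dimensions. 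The same pair of applications for the lower dimensions produces $\underline{\dim}_\Psi \mu \leq \underline{\dim}_\Phi \mu + (1-\lambda)\dim_A \mu$ and its mirror inequality. Since $\mu$ is doubling, $\dim_A \mu < \infty$ by Remark \ref{up}\ref{up 1}, so the error term vanishes as $\lambda \to 1^-$ and equality follows.

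Part \ref{Cor1 2} is immediate from the upper-dimension half of \ref{Cor1 1}, since $\Phi(x)/\Phi_\theta(x) = \Phi(x)/\theta \to 1$ by hypothesis; no doubling assumption is needed because only the upper dimension is asserted.

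For \ref{Cor1 3}, the relation $\Phi \sim \Psi$ supplies $a, b > 0$ with $a\Phi(x) \leq \Psi(x) \leq b\Phi(x)$. If $a \geq 1$ or $b \leq 1$ then one of $\Phi \leq \Psi$ or $\Psi \leq \Phi$ already holds, and the monotonicity \eqref{R4} alone settles the equivalence. Otherwise $a < 1 < b$, and Proposition \ref{comparison} applied to $\Psi \geq a \Phi$ (with $\lambda = a$) and to $\Phi \geq (1/b) \Psi$ (with $\lambda = 1/b$) yields $\overline{\dim}_\Psi \mu \leq (1/a) \overline{\dim}_\Phi \mu$ and $\overline{\dim}_\Phi \mu \leq b\,\overline{\dim}_\Psi \mu$, so finiteness of one upper dimension transfers to the other. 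The closing assertion for positive constant functions is the special case $\Phi \equiv \delta_1$, $\Psi \equiv \delta_2$, for which $\Phi/\Psi$ is itself a positive constant and hence $\Phi \sim \Psi$.

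There is no substantial obstacle here; the only points requiring attention are the strict inequality $\lambda < 1$ in Proposition \ref{comparison} (which forces the small case split in \ref{Cor1 3}) and the role of the doubling hypothesis in the lower-dimension half of \ref{Cor1 1}, which is needed precisely because the term $(1-\lambda)\dim_A \mu$ must be driven to zero as $\lambda \to 1^-$.
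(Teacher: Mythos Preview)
Your approach is the intended one: the paper simply states that this corollary is an immediate consequence of Proposition \ref{comparison}, and your argument spells out exactly how. Parts \ref{Cor1 1} and \ref{Cor1 2} are handled correctly.

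There is, however, a small slip in the case analysis for part \ref{Cor1 3}. When (say) $a \geq 1$, so that $\Phi \leq \Psi$, monotonicity \eqref{R4} gives only $\overline{\dim}_\Psi \mu \leq \overline{\dim}_\Phi \mu$, which yields one direction of the equivalence ($\overline{\dim}_\Phi \mu < \infty \Rightarrow \overline{\dim}_\Psi \mu < \infty$); the converse still requires the upper bound $\Psi \leq b\Phi$ together with Proposition \ref{comparison} (or monotonicity again if $b\leq 1$, but that would force $\Phi=\Psi$). The cleanest fix is to drop the case split entirely: since the constants $a, b$ in $a\Phi \leq \Psi \leq b\Phi$ may always be weakened, one can assume $a < 1 < b$ without loss of generality, and then your ``otherwise'' argument covers everything.
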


\begin{remark}
It would be interesting to know if the assumption of a doubling measure is
necessary for the second statement of \ref{Cor1 1}.
\end{remark}

If $\Phi (x)/\Psi (x)$ does not tend to $1,$ we do not, in general, have
equality of the dimensions as the next result illustrates.

\begin{proposition}
Suppose $\Phi ,\Psi $ are dimension functions decreasing to $0$ as $%
x\rightarrow 0$ with $\Psi (x)|\log x|\rightarrow \infty $ as $x\rightarrow
0 $. Assume there is some constant $\eta >0$ such that $\Phi (x)\geq (1+\eta
)\Psi (x)$ for all $x$ small. Then there is a measure $\mu $ such that $%
\overline{\dim }_{\Phi }\mu <$ $\overline{\dim }_{\Psi }\mu $.
\end{proposition}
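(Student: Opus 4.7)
The plan is to adapt Example~\ref{ex:loc vs Phi}, tuning the depth of the ``middle-concentration chains'' so that the anomalous ratio appears exactly at the $\Psi$-scale. Fix $s>1$, set $p:=3^{-s}\in(0,1/3)$, choose a rapidly increasing sequence $\{n_j\}$, and let $R_j=3^{-n_j}$ and $k_j:=\lceil n_j\Psi(R_j)\rceil$. The hypothesis $\Psi(x)|\log x|\to\infty$ forces $k_j\to\infty$, while $\Psi(x)\to 0$ gives $k_j/n_j\to 0$. Let $M_{n_j}:=[3^{-n_j},2\cdot 3^{-n_j}]$ and let $M_{n_j+i+1}$ be the middle triadic child of $M_{n_j+i}$ for $0\leq i<k_j$; denote its two siblings by $L_{n_j+i+1}$ and $R_{n_j+i+1}$. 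Put $\mu(M_{n_j+i+1})=p\,\mu(M_{n_j+i})$ and $\mu(L_{n_j+i+1})=\mu(R_{n_j+i+1})=\tfrac{1-p}{2}\mu(M_{n_j+i})$ along each chain, and have every other triadic child receive exactly $1/3$ of its parent's $\mu$-mass.

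Next I would read off both dimensions at the common centre $x_j$ of the chain $\{M_{n_j+i}\}$. Taking $R=3^{-n_j}/2$ and $r=3^{-n_j-k_j}/2$ gives $B(x_j,R)=M_{n_j}$ and $B(x_j,r)=M_{n_j+k_j}$, so
\[
\frac{\mu(B(x_j,R))}{\mu(B(x_j,r))}=p^{-k_j}=\Bigl(\frac{R}{r}\Bigr)^{s},
\]
and by choice of $k_j$ one has $r\leq R^{1+\Psi(R)}$; this yields $\overline{\dim}_\Psi\mu\geq s$. For $\overline{\dim}_\Phi\mu$, the measure is locally uniform (ratio $1/3$) below the chain-termination scale $r_j:=3^{-n_j-k_j}$, so for any $r\leq R_j^{1+\Phi(R_j)}$,
\[
\frac{\mu(B(x_j,R))}{\mu(B(x_j,r))}\sim p^{-k_j}\cdot\frac{r_j}{r}.
\]
Comparing this with $(R/r)^\alpha$ and using $\Psi(R_j)/\Phi(R_j)\leq 1/(1+\eta)$ leads to $\overline{\dim}_\Phi\mu\leq 1+(s-1)/(1+\eta)$, which is strictly less than $s$ because $\eta>0$ and $s>1$.

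The main obstacle is showing that the bound on $\overline{\dim}_\Phi\mu$ holds \emph{uniformly} in admissible triples $(x,R,r)$, not only at the distinguished $(x_j,R_j,\,\cdot\,)$. This is handled by choosing $n_{j+1}$ so large that the chain at level $n_{j+1}$ lies well below every scale $R^{1+\Phi(R)}$ arising from balls that still see the $j$-th chain, thereby geometrically isolating the anomaly windows from each other. Then for any $x\in\supp\mu$ and small $R$, the ball $B(x,R)$ interacts with at most one chain, and a short case analysis --- $R$ comparable to some $R_j$, $R$ strictly between two consecutive $R_j$'s, or $R$ far below the deepest relevant chain --- reduces every remaining case either to the uniform regime (where the ratio is $\preceq R/r$) or to the single-chain calculation above, producing the required uniform bound.
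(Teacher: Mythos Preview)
Your construction is sound and, once the case analysis is carried out, yields the result; but it is a genuinely different argument from the paper's. The paper does not build a measure directly. Instead it invokes \cite[Theorem~3.8]{GHM}, which under exactly these hypotheses produces a central Cantor set $E$ with $\overline{\dim}_\Phi E<\overline{\dim}_\Psi E$, and then observes that for the \emph{uniform} Cantor measure $\mu$ on $E$ one has $\overline{\dim}_\Phi\mu=\overline{\dim}_\Phi E$ and $\overline{\dim}_\Psi\mu=\overline{\dim}_\Psi E$. So the paper's proof is two lines long, at the cost of importing the set-theoretic construction from another paper; your proof is self-contained within the present framework and gives an explicit measure on $[0,1]$ with full support, which is arguably more informative.

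Two small points to tighten. First, your choice $k_j=\lceil n_j\Psi(R_j)\rceil$ with $R_j=3^{-n_j}$ does not quite guarantee $r\le R^{1+\Psi(R)}$ when $R=3^{-n_j}/2$, because $\Psi(3^{-n_j}/2)$ may sit slightly below $\Psi(3^{-n_j})$; adding $1$ to $k_j$ (or taking $k_j=\lceil (n_j+1)\Psi(3^{-n_j-1})\rceil$) fixes this harmlessly. Second, for the uniform upper bound on $\overline{\dim}_\Phi\mu$ you will want the observation that since $x^{1+\Phi(x)}$ is monotone, $N(1+\Phi(3^{-N}))$ is increasing in $N$; combined with $\Phi\ge(1+\eta)\Psi$ this forces, for any $R$ at a level inside the $j$-th chain, the admissible $r$ to lie strictly below the chain's bottom, and moreover gives $(k_j-i)/(m-n_j-i)\le 1/(1+\eta)$ uniformly in $i$ and $m$. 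With that in hand your case analysis goes through as sketched.
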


\begin{proof}
This is essentially a consequence of \cite[Theorem 3.8]{GHM} where the
analogous result was shown for sets. Indeed, it is shown that under these
assumptions, there is a central Cantor set $E$ with $\overline{\dim }_{\Phi
}E<$ $\overline{\dim }_{\Psi }E$. If we choose $\mu $ to be the uniform
measure on this Cantor set, then $\overline{\dim }_{\Phi }\mu =\overline{%
\dim }_{\Phi }E,$ while $\overline{\dim }_{\Psi }\mu =$ $\overline{\dim }%
_{\Psi }E$.
\end{proof}

\begin{rmk}
We recall that the condition $\limsup_{x\rightarrow 0}\Psi (x)\left\vert
\log x\right\vert <\infty $ implies that the $\Psi $-dimension coincides
with the Assouad dimension (Proposition \ref{P:basic}\ref{P:basic 4}), hence
the necessity of the hypothesis $\Psi (x)|\log x|\rightarrow \infty $. Later
in the paper (Cor. \ref{abequality}\ref{abequality 2}), we will prove that
if $\Phi ,\Psi \rightarrow \infty ,$ then $\overline{\dim }_{\Phi }\mu =%
\overline{\dim }_{\Psi }\mu $ for all measures $\mu ,$ regardless of the
comparative sizes of $\Phi ,\Psi $.
\end{rmk}

What might be thought of as the analogue of Cor. \ref{Cor1}\ref{Cor1 3} for
the lower dimension (namely, that $\underline{\dim }_{\Phi }\mu >0$ if and
only if $\underline{\dim }_{\Psi }\mu >0$ when $\Phi \sim \Psi $) need not
be true, even when the Assouad dimension of the measure is finite, as the
next example illustrates.

\begin{example}[An example of a doubling measure and dimension functions $%
\Phi \sim \Psi ,$ with $\protect\underline{\dim }_{\Phi }\protect\mu =0,$
but $\protect\underline{\dim }_{\Psi }\protect\mu >0$]
We will let $\Phi ,\Psi $ be the constant functions $1,2$ respectively.
Choose a sequence of integers $\{n_{j}\}$ with $n_{j+1}\geq 9n_{j}$ and take
as $\mu $ the corresponding measure given in Example \ref{ex:loc vs Phi}
with $p_{j}=1$ for all $j$. Thus $\underline{\dim }_{\Phi }\mu =0$.

To see that $\mu $ is a doubling measure, consider any $x\in \mathrm{supp}%
\mu $ and the balls $B(x,3^{-n})$ and $B(x,3^{-(n+1)})$. The smaller of
these balls contains a triadic interval $I$ at level $n+1$ which contains $x$%
. (If there is a choice for $I,$ choose one of positive $\mu $-measuure.)
The parent, $J,$ of $I$ is a triadic interval of level $n$ and has the
property that $\mu (J)/3\leq \mu (I)\leq \mu (J)$. The adjacent triadic
intervals at level $n$, say $J^{-}$ and $J^{+},$ have measure either equal
to that of $\mu (J)$ or $0$.

We have that $I\subseteq B(x,3^{-n-1})\subseteq B(x,3^{-n})\subseteq
J^{-}\cup J\cup J^{+}$. This gives that 
\begin{equation*}
\frac{\mu (B(x,3^{-n}))}{\mu (B(x,3^{-(n+1)}))}\leq \frac{\mu (J^{-}\cup
J\cup J^{+})}{\mu (I)}\leq 9.
\end{equation*}%
Thus $\mu $ is doubling and hence has finite Assouad dimension.

We will now show that $\underline{\dim }_{\Psi }\mu \geq 1/4>0$. Let $x\in 
\mathrm{supp}\mu $ and $r\leq R^{1+\Psi (R)}=R^{3}$. Pick $n$ maximal and $N$
minimal such that 
\begin{equation*}
B(x,r)\subset I_{n}\subset I_{N}\subset B(x,R),
\end{equation*}%
where $I_{n}$ and $I_{N}$ are triadic intervals of length $3^{-n}$ and $%
3^{-N}$ respectively. Choose a sequence of triadic intervals $I_{k},$ of
level $k,$ containing $x,$ so that $I_{n}\leq I_{k+1}\subseteq
I_{k}\subseteq I_{N}$ for each $k=N,...,n-1$.

We remark that as $n\sim 3N$ and $n_{j+1}\geq 9n_{j},$ there can be at most
one choice of $j$ with $\{n_{j},...,2n_{j}\}\bigcap \{N,...,n\}$ non-empty.
By the construction of $\mu ,$ for $k\in \{N,...,n\},$ either $\mu (I_{k})=$ 
$\mu (I_{k-1})/3$ (the measure of the child is $1/3$rd that of the parent)
or $\mu (I_{k})=\mu (I_{k-1})$ (the measure of the child equals that of the
parent), with equality only on levels $k$ where $n_{j}\leq k\leq 2n_{j}$ for
this (unique) choice of $n_{j}$. Hence, for all $N\leq k<n_{j}$ and all $%
2n_{j}<k\leq n$ we have $\mu (I_{k})=\mu (I_{k-1})/3$. One can check that at
least $1/4$ of these children will have measure equal to $1/3$ the measure
of their parent and this gives that 
\begin{equation*}
\frac{\mu (B(x,R))}{\mu (B(x,r))}\geq \frac{\mu (I_{N})}{\mu (I_{n})}\geq
\left( \frac{1}{3}\right) ^{(N-n)/4}.
\end{equation*}%
As $R/r\sim 3^{n-N},$ it follows that $\underline{\dim }_{\Phi }\mu \geq
1/4. $
\end{example}

We remark that it is possible to have $\overline{\dim }_{\Phi }\mu <\infty $
for all non-zero, constant dimension functions and yet $\dim _{qA}\mu
=\infty $. In Proposition \ref{BCbiased} we will prove that the biased
Bernoulli convolution with contraction factor the inverse of the golden mean
has this property, as does the measure in the next example.

\begin{example}[A measure $\protect\mu $ having $\dim _{qA}\protect\mu %
=\infty ,$ but $\overline{\dim }_{\Phi }\protect\mu <\infty $ for all $\Phi =%
\protect\theta >0$]
\label{FTNotqA}We define the measure $\mu $ on the diadic subintervals of $%
[0,1]$ by specifying that the ratio of the measure of the left child of a
diadic subinterval to that of its parent is $2/3,$ while the ratio of the
measure of the right child to the parent is $1/3$.

Let $r=2^{-(n+[\theta n]+2)}$, $R=2^{-n}+2^{-(n+[\theta n]+2)}$ and take $x $
to be the midpoint of the diadic interval of level $n+[\theta n]+1$ that has 
$1/2$ as its right end point. Then $B(x,r)$ is this diadic interval, while $%
B(x,R)$ contains the diadic interval of level $n$ to the right of $1/2 $ and
is contained in the union of this level $n$ diadic interval and the level $%
n-1$ diadic interval immediately to the left of $1/2$. Thus $R/r\sim
2^{\theta n}$ and 
\begin{equation*}
\frac{\mu (B(x,R))}{\mu (B(x,r))}\sim \frac{(2/3)^{n}}{(1/3)^{n+\theta n}}.
\end{equation*}%
Hence%
\begin{equation*}
\overline{\dim }_{\theta }\mu \sim \frac{\log 2+\theta \log 3}{\theta \log 2}%
=\frac{1}{\theta }+\frac{\log 3}{\log 2}.
\end{equation*}%
This tends to infinity as $\theta \rightarrow 0$, thus $\dim _{qA}\mu
=\infty $.
\end{example}

\subsection{Regularity-like properties}

Recall that a measure is called $s$-regular if there is a constant $c>0$
such that for all $x\in \mathrm{supp} \mu $ and $r\leq \mathrm{diam}(\mathrm{%
supp} \mu)$ we have $c^{-1}r^{s}\leq \mu (B(x,r))\leq cr^{s}$. Clearly, all
the $\Phi $-dimensions agree for regular measures, c.f., \cite{KL, KLV}, but
the converse is not true, as seen in \cite[Example 2.7]{HT}.

Following \cite{FK}, we will define the \textbf{upper Minkowski dimension}
of a compactly supported measure $\mu $ to be%
\begin{equation*}
\overline{\dim }_{M}\mu =\inf \{t:\exists B>0\text{ so that}\inf_{z\in 
\mathrm{supp}\mu }\mu (B(z,r))\geq Br^{t}\text{ }\forall r\leq \mathrm{diam}(%
\mathrm{supp} \mu)\}
\end{equation*}%
and the \textbf{Frostman dimension} of $\mu $ to be 
\begin{equation*}
\dim _{F}\mu =\sup \{s:\exists A>0\text{ so that}\sup_{z\in \mathrm{supp}\mu
}\mu (B(z,r))\leq A r^{s}\text{ }\forall r\leq \mathrm{diam}(\mathrm{supp}
\mu)\}.
\end{equation*}%
Note that $\sup_{z}\{\overline{\dim }_{\mathrm{loc}}\mu (z)\}\leq \overline{%
\dim }_{M}\mu $ and $\inf_{z}\{\underline{\dim }_{\mathrm{loc}}\mu (z)\}\geq
\dim _{F}\mu $.

In \cite{FK}, Fraser and K\"{a}enm\"{a}ki show that for the constant
function $\Phi =1/\theta -1$, $\underline{\dim }_{\Phi }\mu \leq \dim
_{F}\mu $ and $\overline{\dim }_{M}\mu \leq $ $\overline{\dim }_{\Phi }\mu
\leq (\overline{\dim }_{M}\mu) /(1-\theta )$. Here is an extension of this
result.

\begin{thm}
\label{thm:box} Let $\mu $ be a measure with compact support and suppose $%
\Phi \in \mathcal{D}.$ Put $L=\limsup_{x\rightarrow 0}\Phi (x)^{-1}$ . Then 
\begin{equation*}
\dim _{F}\mu \geq \underline{\dim }_{\Phi }\mu \geq \dim _{F}\mu -L(%
\overline{\dim }_{M}\mu -\dim _{F}\mu )
\end{equation*}%
and%
\begin{equation*}
\overline{\dim }_{M}\mu \leq \overline{\dim }_{\Phi }\mu \leq \overline{\dim 
}_{M}\mu +L(\overline{\dim }_{M}\mu -\dim _{F}\mu ).
\end{equation*}
\end{thm}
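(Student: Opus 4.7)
The plan is to sandwich the $\Phi$-dimensions between $\overline{\dim}_M\mu$ and $\overline{\dim}_M\mu+L(\overline{\dim}_M\mu-\dim_F\mu)$ (respectively $\dim_F\mu$ and $\dim_F\mu-L(\overline{\dim}_M\mu-\dim_F\mu)$) by combining the uniform ball estimates implicit in the definitions of $\dim_F\mu$ and $\overline{\dim}_M\mu$ with the logarithmic form of the constraint $r\leq R^{1+\Phi(R)}$. Concretely, for any $s<\dim_F\mu$ and any $t>\overline{\dim}_M\mu$ I have constants $A,B>0$ with $Br^{t}\leq \mu(B(z,r))\leq Ar^{s}$ for all $z\in\mathrm{supp}\,\mu$ and all $r\leq\mathrm{diam}(\mathrm{supp}\,\mu)$.

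For the inner inequalities $\overline{\dim}_M\mu\leq\overline{\dim}_\Phi\mu$ and $\underline{\dim}_\Phi\mu\leq\dim_F\mu$, I would fix a reference scale $R_0\leq C_1$ and note that a finite cover of $\mathrm{supp}\,\mu$ by balls of radius $R_0/4$ forces $\inf_{z\in\mathrm{supp}\,\mu}\mu(B(z,R_0))\geq c_0>0$. If some $\gamma<\overline{\dim}_M\mu$ were admissible in the defining inequality for $\overline{\dim}_\Phi\mu$, taking $R=R_0$ and $r\in(0,R_0^{1+\Phi(R_0)}]$ would yield $\mu(B(z,r))\geq(c_0/C_2)(r/R_0)^\gamma$ uniformly in $z$, and a trivial monotonicity extension covers the range $r\in(R_0^{1+\Phi(R_0)},\mathrm{diam}]$, contradicting the definition of $\overline{\dim}_M\mu$. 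The companion bound $\underline{\dim}_\Phi\mu\leq\dim_F\mu$ follows by the mirror argument, using the trivial estimate $\mu(B(z,R_0))\leq 1$.

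For the outer upper bound on $\overline{\dim}_\Phi\mu$, fix $s<\dim_F\mu$ and $t>\overline{\dim}_M\mu$; then $\mu(B(x,R))/\mu(B(x,r))\leq(A/B)R^{s}/r^{t}$. Writing $\mathcal{L}=-\log R$ and $\ell=-\log r$, the constraint $r\leq R^{1+\Phi(R)}$ becomes $\ell\geq(1+\Phi(R))\mathcal{L}$, and the desired bound by a constant multiple of $(R/r)^{\gamma}$ reduces, after taking logarithms and dividing through by $\mathcal{L}$, to the requirement $\gamma\geq t+(t-s)/\Phi(R)+O(1/\mathcal{L})$. Since $\limsup_{R\to 0}1/\Phi(R)=L$, for every $\varepsilon>0$ any $\gamma>t+(L+\varepsilon)(t-s)$ satisfies this for all sufficiently small $R$, so $\overline{\dim}_\Phi\mu\leq t+(L+\varepsilon)(t-s)$; letting $\varepsilon\to 0$, $t\searrow\overline{\dim}_M\mu$ and $s\nearrow\dim_F\mu$ gives the claimed bound. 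The lower bound on $\underline{\dim}_\Phi\mu$ is the symmetric argument starting from $\mu(B(x,R))/\mu(B(x,r))\geq(B/A)R^{t}/r^{s}$ and ending with the sufficient condition $\gamma<s-(L+\varepsilon)(t-s)$.

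The main obstacle I expect is purely organisational: juggling the three layers of approximation ($s\nearrow\dim_F\mu$, $t\searrow\overline{\dim}_M\mu$, $\limsup 1/\Phi=L$) and absorbing the additive $O(1/\mathcal{L})$ error coming from $\log(A/B)$ into the leading $\varepsilon$. The degenerate situations where $L=\infty$ or $\overline{\dim}_M\mu=\infty$ make the outer bounds vacuous and can be dispatched at the outset, and the case $\overline{\dim}_M\mu=\dim_F\mu$ collapses both bounds to equality with no extra work.
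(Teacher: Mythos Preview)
Your proposal is correct and follows essentially the same route as the paper: a compactness argument gives $\inf_{z}\mu(B(z,R_0))>0$, which combined with the defining inequality at the fixed scale $R_0$ yields the inner bounds $\overline{\dim}_M\mu\leq\overline{\dim}_\Phi\mu$ and $\underline{\dim}_\Phi\mu\leq\dim_F\mu$, while the outer bounds come from inserting the uniform estimates $Br^{t}\leq\mu(B(z,r))\leq Ar^{s}$ into the ratio $\mu(B(x,R))/\mu(B(x,r))$ and reading off the exponent $t+(t-s)/\Phi(R)$ (respectively $s-(t-s)/\Phi(R)$). The paper carries out the latter step by writing $r=R^{1+\Psi(R)}$ with $\Psi\geq\Phi$ and manipulating powers of $R$ directly, whereas you pass to logarithmic variables; these are equivalent, and note that your $O(1/\mathcal{L})$ term is in fact absorbed entirely by the free multiplicative constant $C_2$ in the definition, so no separate limiting argument for it is needed.
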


\begin{proof}[Proof of Theorem \protect\ref{thm:box}]
We first observe that for any fixed $\rho >0,$%
\begin{equation*}
\inf \{\mu (B(z,\rho )):z\in \mathrm{supp}\mu \}>0.
\end{equation*}%
This is an elementary compactness argument. Assume not. Then for some $\rho
>0$ there exists a sequence $z_{n}\in \mathrm{supp}\mu $ such that $%
z_{n}\rightarrow z_{0}\in \mathrm{supp}\mu $ and $\mu (B(z_{n},\rho
))\rightarrow 0$. Choose $N$ such that for all $n\geq N$ we have $\Vert
z_{n}-z_{0}\Vert \leq \rho /2$. Then $B(z_{0},\rho /2)\subset B(z_{n},\rho )$
and hence $\mu (B(z_{0},\rho /2)\leq \mu (B(z_{n},\rho ))$. This implies
that $\mu (B(z_{0},\rho /2))=0$, a contradiction to $z_{0}$ being in the
support of $\mu $.

Let $D=\overline{\dim }_{\Phi }\mu $ and $d=\underline{\dim }_{\Phi }\mu $.
We will first prove the left side inequalities. Of course, the second is
obvious if $D=\infty ,$ so assume otherwise. Fix $0<\varepsilon <1$ and
choose $\rho $ such that for all $r\leq R^{1+\Phi (R)}\leq R\leq \rho $ and $%
z\in \mathrm{supp}\mu ,$ 
\begin{equation*}
C_{1}\left( \frac{R}{r}\right) ^{d-\varepsilon }\leq \frac{\mu (B(z,R))}{\mu
(B(z,r))}\leq C_{2}\left( \frac{R}{r}\right) ^{D+\varepsilon }\text{ .}
\end{equation*}

Assume $r\leq \rho ^{1+\Phi (\rho )}$. For some constant $C_{\rho }>0$ we
have 
\begin{equation*}
\frac{C_{\rho }}{\mu (B(z,r))}\leq \frac{\mu (B(z,\rho ))}{\mu (B(z,r))}\leq
C_{2}\left( \frac{\rho }{r}\right) ^{D+\varepsilon }=C_{2}\rho
^{D+\varepsilon }r^{-(D+\varepsilon )}.
\end{equation*}%
Consequently, for a suitable constant $B,$ 
\begin{equation*}
\mu (B(z,r))\geq C_{\rho }C_{2}^{-1}\rho ^{-(D+\varepsilon
)}r^{D+\varepsilon }=Br^{D+\varepsilon }.
\end{equation*}%
As this is true for all $\varepsilon >0,$ we deduce that $\overline{\dim }%
_{M}\mu \leq D$.

We can similarly conclude that $\dim _{F}\mu \geq d$ since 
\begin{equation*}
\frac{1}{\mu (B(z,r))}\geq \frac{\mu (B(z,\rho ))}{\mu (B(z,r))}\geq
C_{1}\left( \frac{\rho }{r}\right) ^{d-\varepsilon }=C_{1}\rho
^{d-\varepsilon }r^{-(d-\varepsilon )}.
\end{equation*}

Now we prove the right side inequalities. For notational ease, put $a=\dim
_{F}\mu $ and $b=$ $\overline{\dim }_{M}\mu $. There is no loss of
generality in assuming $b<\infty $. Take $\varepsilon >0$. For any $q$ and $%
z\in \mathrm{supp}\mu $ we have $\mu (B(z,q))\leq Aq^{a-\varepsilon }$ and $%
\mu (B(z,q))\geq Bq^{b+\varepsilon }$ for positive constants $A,B$ depending
on $\varepsilon $.

Suppose $r=R^{1+\Psi (R)}$ with $\Psi (R)\geq \Phi (R)$. Then for $C=B/A$ we
have 
\begin{eqnarray*}
\frac{\mu (B(z,R))}{\mu (B(z,r))} &\geq &\frac{B}{A}\left( \frac{%
R^{b+\varepsilon }}{r^{a-\varepsilon }}\right) =CR^{b+\varepsilon
-(a-\varepsilon )(1+\Psi (R))} \\
&=&CR^{-\Psi (R)(a-\varepsilon -(b-a+2\varepsilon )/\Psi (R))}=C\left( \frac{%
R}{r}\right) ^{a-\varepsilon -(b-a+2\varepsilon )/\Psi (R)} \\
&\geq &C\left( \frac{R}{r}\right) ^{a-\varepsilon -(b-a+2\varepsilon )/\Phi
(R)}.
\end{eqnarray*}%
As this holds for all $\varepsilon >0,$ it follows that 
\begin{equation*}
d\geq \liminf_{R\rightarrow 0}(a-(b-a)/\Phi (R)))=a-L(b-a).
\end{equation*}

The argument for $D$ is similar.
\end{proof}

The following corollaries are immediate.

\begin{corollary}
{\ } \label{abequality}

\begin{enumerate}
\item \label{abequality 1}  If $\Phi (x)\rightarrow \infty $ as $%
x\rightarrow 0$, then $\overline{\dim }_{\Phi }\mu =$ $\overline{\dim }%
_{M}\mu $. If, in addition, $\overline{\dim }_{M}\mu <\infty ,$ then $%
\underline{\dim }_{\Phi }\mu =\dim _{F}\mu $.

\item \label{abequality 2} If $\Phi _{1},\Phi _{2}\rightarrow \infty $, then 
$\overline{\dim }_{\Phi _{1}}\mu =$ $\overline{\dim }_{\Phi _{2}}\mu $. If $%
\overline{\dim }_{M}\mu <\infty ,$ then also $\underline{\dim }_{\Phi
_{1}}\mu =\underline{\dim }_{\Phi _{2}}\mu $.
\end{enumerate}
\end{corollary}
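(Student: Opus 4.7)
The plan is to read Corollary \ref{abequality} directly off of Theorem \ref{thm:box}, noting that the hypotheses on $\Phi$ exactly force the error term $L := \limsup_{x\to 0}\Phi(x)^{-1}$ to vanish. Since the theorem gives the two-sided estimates
\begin{equation*}
\overline{\dim}_{M}\mu \leq \overline{\dim}_{\Phi}\mu \leq \overline{\dim}_{M}\mu + L\bigl(\overline{\dim}_{M}\mu - \dim_{F}\mu\bigr)
\end{equation*}
and
\begin{equation*}
\dim_{F}\mu \geq \underline{\dim}_{\Phi}\mu \geq \dim_{F}\mu - L\bigl(\overline{\dim}_{M}\mu - \dim_{F}\mu\bigr),
\end{equation*}
all that remains is to interpret what happens when $L=0$.

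For part \ref{abequality 1}, $\Phi(x)\to\infty$ gives $L=0$. The upper estimate then sandwiches $\overline{\dim}_{\Phi}\mu$ between two copies of $\overline{\dim}_{M}\mu$; this works whether or not $\overline{\dim}_{M}\mu$ is finite, because the error term on the right is $0\cdot\overline{\dim}_{M}\mu$ with the convention already implicit in the statement of Theorem \ref{thm:box} (if $\overline{\dim}_{M}\mu = \infty$ the conclusion $\overline{\dim}_{\Phi}\mu = \infty$ is already forced by the left inequality). For the lower statement, however, the product $L(\overline{\dim}_{M}\mu-\dim_{F}\mu)$ is the indeterminate $0\cdot\infty$ unless we know $\overline{\dim}_{M}\mu<\infty$; under that finiteness hypothesis the error vanishes and we conclude $\underline{\dim}_{\Phi}\mu = \dim_{F}\mu$. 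This is why the second sentence of \ref{abequality 1} requires the extra hypothesis.

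Part \ref{abequality 2} is then immediate: applying \ref{abequality 1} to each of $\Phi_{1},\Phi_{2}$ gives $\overline{\dim}_{\Phi_{1}}\mu = \overline{\dim}_{M}\mu = \overline{\dim}_{\Phi_{2}}\mu$ unconditionally, and $\underline{\dim}_{\Phi_{1}}\mu = \dim_{F}\mu = \underline{\dim}_{\Phi_{2}}\mu$ whenever $\overline{\dim}_{M}\mu<\infty$.

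There is essentially no obstacle here beyond bookkeeping; the only point that requires care is the $0\cdot\infty$ issue in the lower-dimension estimate, which explains the asymmetric role of the hypothesis $\overline{\dim}_{M}\mu<\infty$ and the fact that the upper-dimension equalities hold without it.
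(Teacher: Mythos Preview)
Your proposal is correct and matches the paper's approach: the paper simply states that the corollary is immediate from Theorem \ref{thm:box}, and you have spelled out exactly the intended reasoning, namely that $\Phi(x)\to\infty$ forces $L=0$ so the sandwich inequalities collapse. Your observation about the $0\cdot\infty$ indeterminacy in the lower-dimension estimate is precisely the reason for the asymmetric hypothesis $\overline{\dim}_{M}\mu<\infty$.
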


\begin{remark}
{\ }

\begin{enumerate}
\item It was shown in \cite[Prop. 2.8]{GHM} that if $\Phi (x)\rightarrow
\infty $ as $x\rightarrow 0$, then $\overline{\dim }_{\Phi }E$ is the upper
box (or Minkowski) dimension of $E,$ while $\underline{\dim }_{\Phi }E$ is
the lower box dimension if, in addition, $\underline{\dim }_{\Phi }E>0$.

\item We do not know if the assumption that $\overline{\dim }_{M}\mu <\infty 
$ is necessary.
\end{enumerate}
\end{remark}

\begin{corollary}
\label{thm:loc vs box vs theta} Let $\Psi _{\theta }=1/\theta -1$ for $%
\theta \in (0,1)$. Then 
\begin{equation*}
\overline{\dim }_{M}\mu \leq \overline{\dim }_{\Psi _{\theta }}\mu \leq 
\frac{\overline{\dim }_{M}\mu -\theta \dim _{F}\mu }{1-\theta }
\end{equation*}%
and 
\begin{equation*}
\frac{\dim _{F}\mu -\theta \overline{\dim }_{M}\mu }{1-\theta }\leq 
\underline{\dim }_{\Psi _{\theta }}\mu \leq \dim _{F}\mu .
\end{equation*}

Furthermore, $\lim_{\theta \rightarrow 0}\overline{\dim }_{\Psi _{\theta
}}\mu =\overline{\dim }_{M}\mu $ and if $\overline{\dim }_{M}\mu <\infty ,$
then $\lim_{\theta \rightarrow 0}\underline{\dim }_{\Psi _{\theta }}\mu
=\dim _{F}\mu .$
\end{corollary}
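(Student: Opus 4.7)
The plan is to read Corollary \ref{thm:loc vs box vs theta} off Theorem \ref{thm:box} by substituting $\Phi = \Psi_\theta$ and performing an elementary rearrangement. First I would compute $L := \limsup_{x\to 0}\Psi_\theta(x)^{-1}$. Since $\Psi_\theta$ is the constant $1/\theta - 1$, we have $L = \theta/(1-\theta)$. Plugging this into the two inequalities of Theorem \ref{thm:box} gives
\begin{equation*}
\overline{\dim}_M\mu \leq \overline{\dim}_{\Psi_\theta}\mu \leq \overline{\dim}_M\mu + \frac{\theta}{1-\theta}\bigl(\overline{\dim}_M\mu - \dim_F\mu\bigr),
\end{equation*}
\begin{equation*}
\dim_F\mu - \frac{\theta}{1-\theta}\bigl(\overline{\dim}_M\mu - \dim_F\mu\bigr) \leq \underline{\dim}_{\Psi_\theta}\mu \leq \dim_F\mu.
\end{equation*}

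Next I would combine the terms in each bound over the common denominator $1-\theta$: the upper bound on $\overline{\dim}_{\Psi_\theta}\mu$ becomes $(\overline{\dim}_M\mu - \theta\dim_F\mu)/(1-\theta)$ and the lower bound on $\underline{\dim}_{\Psi_\theta}\mu$ becomes $(\dim_F\mu - \theta\overline{\dim}_M\mu)/(1-\theta)$. This is exactly the content of the first part of the corollary, so the bound half is done.

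For the two limit statements I would apply a sandwich argument as $\theta \to 0$. If $\overline{\dim}_M\mu < \infty$, then $(\overline{\dim}_M\mu - \theta\dim_F\mu)/(1-\theta) \to \overline{\dim}_M\mu$, and since the lower bound is already $\overline{\dim}_M\mu$, the first limit follows. If $\overline{\dim}_M\mu = \infty$, then the inequality $\overline{\dim}_{\Psi_\theta}\mu \geq \overline{\dim}_M\mu$ already forces $\overline{\dim}_{\Psi_\theta}\mu = \infty$ for every $\theta$, so the limit is $\infty$ trivially. For the lower-dimension limit, under the assumed finiteness $\overline{\dim}_M\mu < \infty$, the lower bound $(\dim_F\mu - \theta\overline{\dim}_M\mu)/(1-\theta)$ tends to $\dim_F\mu$ as $\theta \to 0$, and the upper bound is already $\dim_F\mu$, giving the second limit.

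There is really no obstacle here, since the corollary is a purely algebraic consequence of Theorem \ref{thm:box}; the only subtlety worth flagging is that the finiteness hypothesis $\overline{\dim}_M\mu < \infty$ is needed in the lower-dimension limit because otherwise the lower bound on $\underline{\dim}_{\Psi_\theta}\mu$ degenerates to $-\infty$ and no information can be extracted from that side of the inequality.
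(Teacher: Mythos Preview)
Your proof is correct and is exactly the approach the paper intends: the corollary is stated as an immediate consequence of Theorem~\ref{thm:box}, and your substitution $L=\theta/(1-\theta)$ together with the algebraic simplification and sandwich argument for the limits is precisely what is needed.
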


Recall that in Proposition \ref{basic} it was shown that $\overline{\dim }%
_{\Phi }\mu \geq \sup_{z\in \mathrm{supp}\mu }\dim_{\mathrm{loc}}\mu (z)$.
Another consequence of the theorem is that we can show it is possible to
have $\overline{\dim }_{\Phi }\mu =\infty $ for all $\Phi \in \mathcal{D}$
and yet $\dim_{\mathrm{loc}}\mu (z)\leq 1$ for all $z\in \mathrm{supp}\mu $.

\begin{example}[A measure $\protect\mu $ with $\dim _{\mathrm{loc}}\protect%
\mu (z)=1$ for all $z,$ but $\overline{\dim }_{M}\protect\mu =\infty $]
We can achieve this with a slight modification of the strategy of Example %
\ref{ex:loc vs Phi}. Instead of assigning special weights $p_{j}$ on levels $%
n_{j}+k$ for $k=0,...,n_{j}$, do this on levels $n_{j}+k$ for $%
k=0,...,n_{j}^{2}$ and choose $n_{j+1}\gg n_{j}^{2}$. Let $\Phi
(x)=\left\vert \log _{3}x\right\vert \rightarrow \infty $ as $x\rightarrow 0$%
. By choosing $p_{j}$ with $\liminf_{j}p_{j}=1$, we can construct $\mu $
with the property that $\dim _{\mathrm{loc}}\mu (z)=1$ for all $z\in \mathrm{%
supp}\mu ,$ but $\overline{\dim }_{\Phi }\mu =\infty $. Since $\Phi
(x)\rightarrow \infty $ as $x\rightarrow 0$, we have $\overline{\dim }%
_{M}\mu =\infty $ from Corollary \ref{abequality} and therefore $\overline{%
\dim }_{\Psi }\mu =\infty $ for all dimension functions $\Psi $.

Similarly, by taking $p_{j}$ with $\limsup_{j}p_{j}=0$, we can construct a
measure $\nu $ with $\underline{\dim }_{\Phi }\nu =\dim _{F}\nu =0$ for all
dimension functions $\Phi ,$ while $\dim _{\mathrm{loc}}\nu (z)=1$ for all $%
z $.
\end{example}

\subsection{ Smoothness Properties}

In \cite{FT}, Fraser and Troscheit show that if $\mu $ is a uniformly
perfect, absolutely continuous measure supported on $[0,1],$ with monotonic
density function $f,$ then $f\in L^{p}(\mathbb{R)}$ for some $p>1$. They
asked if this was true without the monotonicity assumption. Here we will
show that the answer to this question is no.

In this subsection (only), we will think of $[0,1]$ both as a subset of $%
\mathbb{R}$ and as the group $\mathbb{T}$ under addition mod 1. When we
consider balls in the latter sense, we will use the notation $B_{\mathbb{T}}$%
.

When we say a measure on $[0,1]$ is symmetric, we will mean that $\mu
(E)=\mu (1-E)$ for all Borel sets $E\subseteq \lbrack 0,1]\subseteq \mathbb{R%
}$. (Of course, if we view $[0,1]$ as $\mathbb{T}$, then $1-E=-E$.)

\begin{lemma}
\label{L1}Let $\mu $ be a measure supported on $[0,1]\subseteq \mathbb{R}$
that is symmetric.

\begin{enumerate}
\item \label{L1 1} If there are constants $a,c>0$ such that $\mu
(B(z,R))\geq c\mu (B(z,aR)) $ for all $z\in $ $\mathrm{supp}$ $\mu $ and $%
R\leq 1$, then 
\begin{equation*}
\mu (B_{\mathbb{T}}(z,R))\geq \frac{c}{2}\mu (B_{\mathbb{T}}(z,aR))\text{
for all }z\in \text{$\mathrm{supp}$}\mu \text{ and }R\leq 1\text{.}
\end{equation*}

\item \label{L1 2} Similarly, if there are constants $a,c>0$ such that $\mu
(B_{\mathbb{T}}(z,R))\geq c\mu (B_{\mathbb{T}}(z,aR))$ for all $z\in $ $%
\mathrm{supp}$ $\mu $ and $R\leq 1$, then 
\begin{equation*}
\mu (B(z,R))\geq \frac{c}{2}\mu (B(z,aR))\text{ for all }z\in \text{$\mathrm{%
supp}$}\mu \text{ and }R\leq 1\text{.}
\end{equation*}
\end{enumerate}
\end{lemma}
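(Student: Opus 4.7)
The plan is to reduce both parts to the single geometric observation that on $[0,1]$ the torus ball sits inside the union of two Euclidean balls reflected through $1/2$, and then to use the symmetry of $\mu$ to identify their measures.

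First, I would record two inclusions (as subsets of $[0,1]$):
\begin{equation*}
B(z,R)\subseteq B_{\mathbb{T}}(z,R)\subseteq B(z,R)\cup B(1-z,R).
\end{equation*}
The left inclusion is immediate from $d_{\mathbb{T}}(x,z)\leq |x-z|$. For the right inclusion, I would take $x\in B_{\mathbb{T}}(z,R)\setminus B(z,R)$, so $|x-z|\geq R$ but $1-|x-z|<R$, and then show by a short case check (separating $x+z\geq 1$ and $x+z<1$, and using that $x,z\in[0,1]$ together with $|x-z|>1-R$) that $|x-(1-z)|<R$. This argument also confirms that the wrap-around case forces $\min(z,1-z)<R$, which keeps the ball $B(1-z,R)$ genuinely nontrivial on $[0,1]$. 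This geometric step is the only non-automatic part, and I expect it to be routine once organised by cases.

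Next, symmetry of $\mu$ gives $\mu(B(1-z,R))=\mu(B(z,R))$, since $B(1-z,R)\cap[0,1]=1-(B(z,R)\cap[0,1])$. Combining with the two inclusions yields the two key comparison estimates
\begin{equation*}
\mu(B(z,R))\leq \mu(B_{\mathbb{T}}(z,R))\leq 2\mu(B(z,R)).
\end{equation*}

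Finally, for part (i), I chain the hypothesis together with these estimates:
\begin{equation*}
\mu(B_{\mathbb{T}}(z,R))\geq \mu(B(z,R))\geq c\,\mu(B(z,aR))\geq \tfrac{c}{2}\mu(B_{\mathbb{T}}(z,aR)).
\end{equation*}
For part (ii), I run the same chain in the opposite direction:
\begin{equation*}
\mu(B(z,R))\geq \tfrac{1}{2}\mu(B_{\mathbb{T}}(z,R))\geq \tfrac{c}{2}\mu(B_{\mathbb{T}}(z,aR))\geq \tfrac{c}{2}\mu(B(z,aR)).
\end{equation*}
Once the inclusion $B_{\mathbb{T}}(z,R)\subseteq B(z,R)\cup B(1-z,R)$ is in hand, neither part requires any further work. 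The main (minor) obstacle is the case analysis establishing that inclusion; everything after that is a two-line computation.
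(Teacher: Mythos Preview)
Your proposal is correct and follows essentially the same approach as the paper: both arguments establish the two-sided comparison $\mu(B(z,R))\leq \mu(B_{\mathbb{T}}(z,R))\leq 2\mu(B(z,R))$ using the symmetry of $\mu$, and then chain the hypothesis through these inequalities in each direction. The only cosmetic difference is that the paper handles the wrap-around piece explicitly (writing $B_{\mathbb{T}}(z,R)=(z-R,1]\cup[0,z+R-1)$ when $z+R>1$ and reflecting the second interval), whereas you package this as the single inclusion $B_{\mathbb{T}}(z,R)\subseteq B(z,R)\cup B(1-z,R)$; the underlying content is identical.
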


\begin{proof}
First, note that $\mu (B(z,R))=\mu (B(z,R)\bigcap [0,1])\leq \mu (B_{\mathbb{%
T}}(z,R))$ for all $z\in \lbrack 0,1]$ and $R\leq 1$. If $B(z,R)\bigcap
[0,1]=B(z,R),$ then $\mu (B(z,R))=\mu (B_{\mathbb{T}}(z,R))$. Otherwise, if $%
z\leq 1<z+R$, then $B_{\mathbb{T}}(z,R)=(z-R,1]\bigcup [0,z+R-1)$. From the
symmetry of $\mu ,$ it follows that 
\begin{equation*}
\mu (B_{\mathbb{T}}(z,R))=\mu ((z-R,1])+\mu ((1-(z+R-1),1]).
\end{equation*}%
But $1-(z+R-1)\geq z-R,$ hence $\mu (B_{\mathbb{T}}(z,R))\leq 2\mu (B(z,R))$%
. The argument if $z-R<0\leq R$ is similar. Consequently, we also have $\mu
(B(z,R))\geq \frac{1}{2}\mu (B_{\mathbb{T}}(z,R))$.

Both parts follow easily from these observations.
\end{proof}

\begin{lemma}
\label{L2}Let $\mu $ be the uniform Cantor measure on the classical
middle-third Cantor set $C$.

\begin{enumerate}
\item \label{L2 1} For every $z\in C$ and $R\leq 1$ we have 
\begin{equation*}
\mu (B(z,R))\geq 8\mu (B(z,R/3^{4})).
\end{equation*}

\item \label{L2 2} For every $z\in \lbrack 0,1]$ and $R\leq 1$ we have 
\begin{equation*}
\mu (B(z,R))\geq 8\mu (B(z,R/3^{5})).
\end{equation*}
\end{enumerate}
\end{lemma}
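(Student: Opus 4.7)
The plan is to exploit the self-similar structure of the middle-third Cantor set directly. For $z \in C$, let $I_k(z)$ denote the unique level-$k$ basic Cantor interval containing $z$; it has length $3^{-k}$ and $\mu(I_k(z)) = 2^{-k}$.

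For part~\ref{L2 1}, I fix $z \in C$ and $R \leq 1$ and let $k \geq 0$ be the integer with $3^{-(k+1)} < R \leq 3^{-k}$. Since $z \in I_{k+1}(z)$ and $I_{k+1}(z)$ has diameter $3^{-(k+1)} < R$, the inclusion $I_{k+1}(z) \subseteq B(z,R)$ holds, giving $\mu(B(z,R)) \geq 2^{-(k+1)}$. For the upper bound on $\mu(B(z, R/3^{4}))$, I note that $R/3^{4} \leq 3^{-(k+4)}$. The key geometric fact is that every $w \in C \setminus I_{k+4}(z)$ satisfies $|w - z| \geq 3^{-(k+4)}$, which forces $B(z, R/3^{4}) \cap C \subseteq I_{k+4}(z)$ and hence $\mu(B(z, R/3^{4})) \leq 2^{-(k+4)}$. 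Combining, the ratio is at least $2^{-(k+1)}/2^{-(k+4)} = 8$.

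For part~\ref{L2 2}, I reduce to part~\ref{L2 1} by comparing with a nearby Cantor point. If $B(z, R/3^{5}) \cap C = \emptyset$, then $\mu(B(z,R/3^{5})) = 0$ and there is nothing to prove. Otherwise pick $z' \in C$ with $|z' - z| < R/3^{5}$; the triangle inequality yields $B(z, R/3^{5}) \subseteq B(z', 2R/3^{5})$ and $B(z', R(1 - 1/3^{5})) \subseteq B(z, R)$. Setting $R' := R(1 - 1/3^{5}) \leq 1$ and applying part~\ref{L2 1} to $z'$ gives $\mu(B(z', R')) \geq 8\mu(B(z', R'/3^{4}))$. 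Since $R'/3^{4} = 242R/19683 > 162R/19683 = 2R/3^{5}$, we have $B(z', R'/3^{4}) \supseteq B(z', 2R/3^{5}) \supseteq B(z, R/3^{5})$, and chaining the inequalities produces $\mu(B(z,R)) \geq 8\mu(B(z,R/3^{5}))$.

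The main obstacle is the geometric estimate underlying part~\ref{L2 1}, namely $\operatorname{dist}(z, C \setminus I_{k+4}(z)) \geq 3^{-(k+4)}$ for $z \in C$. This rests on the Cantor gap structure at level $k+4$: sibling basic intervals (the two children of a common level-$(k+3)$ parent) are separated by exactly $3^{-(k+4)}$, the middle third of that parent, whereas any non-sibling pair of level-$(k+4)$ basic intervals is separated by a gap of size at least $3^{-(k+3)}$, arising from the middle third of some coarser ancestor. Once this estimate is in hand, both parts reduce to elementary triangle-inequality manipulations.
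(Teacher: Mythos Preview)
Your proof is correct and follows essentially the same strategy as the paper: for part~\ref{L2 1} you sandwich $B(z,R)$ and $B(z,R/3^4)$ between basic Cantor intervals at nearby levels using the gap structure, and for part~\ref{L2 2} you reduce to part~\ref{L2 1} via a nearby Cantor point and the triangle inequality. The only cosmetic differences are that the paper first treats the exact case $R=3^{-N}$ and then bootstraps, and in part~\ref{L2 2} the paper takes the nearest gap endpoint $w$ and applies part~\ref{L2 1} with radius $2R/3$, whereas you use an arbitrary $z'\in C\cap B(z,R/3^5)$ and radius $R(1-1/3^5)$; both choices lead to the same conclusion.
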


\begin{remark}
We emphasize that in \ref{L2 1}, the bound holds for all $z \in C$ while in  %
\ref{L2 2}, it must hold for all $z \in [0,1]$.
\end{remark}

\begin{proof}
\ref{L2 1} First, suppose $R=3^{-N}$ and let $z\in C$. Consider the Cantor
intervals $I_{j}$ of levels $j=N,$ $N+3$ that contain $z$. Note that $%
B(z,3^{-N})$ contains $\mathrm{int} I_{N}$ and since the gaps adjacent to $%
I_{N+3}$ have length at least $3^{-(N+3)}$, $B(z,3^{-(N+3)})\bigcap
C\subseteq I_{N+3} $. Consequently,%
\begin{equation*}
\mu (B(z,3^{-N}))\geq 2^{-N}\text{ and }\mu (B(z,3^{-(N+3)}))\leq 2^{-(N+3)}%
\text{.}
\end{equation*}%
Hence $\mu (B(z,3^{-N}))\geq 8$ $\mu (B(z,3^{-(N+3)}))$.

Now suppose $0<R\leq 1$ and the integer $N$ is chosen with $3^{-(N+1)}<R\leq
3^{-N}$. Suppose $z\in C$. Then%
\begin{equation*}
\mu (B(z,R))\geq \mu (B(z,3^{-(N+1)}))\geq 8\mu (B(z,3^{-(N+4)}))\geq 8\mu
(B(z,R/3^{4}))\text{.}
\end{equation*}

\ref{L2 2} If $z\in C$ there is nothing to prove, so assume otherwise. If $%
B(z,R/3^{5})\bigcap C$ is empty there is, again, nothing to prove. So assume
otherwise. Then $z$ belongs to one of the gaps in the construction of the
Cantor set and the distance to the nearest endpoint of that gap, $w,$ is at
most $R/3^{5}$. Hence $B(z,R/3^{5})\subseteq B(w,2R/3^{5})$ and $%
B(w,2R/3)\subseteq B(z,R)$. As $w\in C,$ we can apply part \ref{L2 1} to
deduce that 
\begin{equation*}
\mu (B(z,R/3^{5}))\leq \mu (B(w,2R/3^{5}))\leq \frac{1}{8}\mu
(B(w,2R/3))\leq \frac{1}{8}\mu (B(z,R)).
\end{equation*}
\end{proof}

Suppose $\mu ,\nu $ are measures supported on $[0,1]$. When we write $\mu
\ast \nu $ we will mean the convolution taken over $\mathbb{T}$. Thus $\mu
\ast \nu $ is another measure supported on $[0,1],$ which we can either
think of as a measure on $\mathbb{T}$ or on $\mathbb{R}$.

\begin{lemma}
\label{L3} Suppose $\mu, \nu$ are measures on $[0,1]$. If there are
constants $a,c>0$ such that $\mu (B_{\mathbb{T}}(z,R))\geq c\mu (B_{\mathbb{T%
}}(z,aR))$ for all $z\in \lbrack 0,1]$ and $R\leq 1,$ then%
\begin{equation*}
\mu \ast \nu (B_{\mathbb{T}}(z,R))\geq c\mu \ast \nu (B_{\mathbb{T}}(z,aR))%
\text{ for all }z\in \lbrack 0,1]\text{ and }R\leq 1.
\end{equation*}
\end{lemma}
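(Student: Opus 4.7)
The plan is to unwind the convolution using Fubini and reduce to a pointwise application of the hypothesis.

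First I would write, for any Borel set $E \subseteq \mathbb{T}$,
\begin{equation*}
\mu \ast \nu(E) = \int_{[0,1]} \int_{[0,1]} \mathbf{1}_{E}(x + y) \, d\mu(x) \, d\nu(y),
\end{equation*}
where addition is taken modulo $1$. Specializing to $E = B_{\mathbb{T}}(z,R)$ and noting that $x + y \in B_{\mathbb{T}}(z,R)$ if and only if $x \in B_{\mathbb{T}}(z - y, R)$ (translation is an isometry of $\mathbb{T}$), the inner integral evaluates to $\mu(B_{\mathbb{T}}(z - y, R))$, giving
\begin{equation*}
\mu \ast \nu(B_{\mathbb{T}}(z,R)) = \int_{[0,1]} \mu(B_{\mathbb{T}}(z - y, R)) \, d\nu(y).
\end{equation*}

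Next I would apply the hypothesis to the integrand pointwise in $y$. Since the hypothesis $\mu(B_{\mathbb{T}}(w,R)) \geq c\, \mu(B_{\mathbb{T}}(w,aR))$ is assumed to hold for every $w \in [0,1]$ and every $R \leq 1$, in particular it holds for $w = z - y \pmod 1$, so
\begin{equation*}
\mu(B_{\mathbb{T}}(z - y, R)) \geq c \, \mu(B_{\mathbb{T}}(z - y, aR)).
\end{equation*}
Integrating both sides against the (positive) measure $\nu$ and reversing the Fubini step yields
\begin{equation*}
\mu \ast \nu(B_{\mathbb{T}}(z,R)) \geq c \int_{[0,1]} \mu(B_{\mathbb{T}}(z - y, aR)) \, d\nu(y) = c \, \mu \ast \nu(B_{\mathbb{T}}(z, aR)),
\end{equation*}
as desired.

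There is essentially no obstacle here: the argument is a routine Fubini-plus-pointwise-inequality. The only mild point to verify is that the hypothesis is stated for every $z \in [0,1]$, not merely for $z \in \mathrm{supp}\,\mu$, which is precisely what allows the application to the shifted centre $z - y$ for $\nu$-a.e.\ $y$. The measurability of $y \mapsto \mu(B_{\mathbb{T}}(z - y, R))$ in $y$, needed for Fubini, is standard because $B_{\mathbb{T}}(z - y, R)$ depends continuously on $y$ in the Hausdorff sense on $\mathbb{T}$.
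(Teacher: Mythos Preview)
Your proof is correct and follows essentially the same approach as the paper: write the convolution as a double integral, use the translation identity $\mathbf{1}_{B_{\mathbb{T}}(z,R)}(x+y)=\mathbf{1}_{B_{\mathbb{T}}(z-y,R)}(x)$ to reduce the inner integral to $\mu(B_{\mathbb{T}}(z-y,R))$, apply the hypothesis pointwise in $y$, and integrate. Your additional remarks on measurability and on the necessity of the hypothesis holding for all $z\in[0,1]$ are accurate but not needed beyond what the paper states.
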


\begin{proof}
Let $z\in \lbrack 0,1]$ and $R\leq 1$. With addition being understood mod 1,
we have 
\begin{eqnarray*}
\mu \ast \nu (B_{\mathbb{T}}(z,R)) &=&\int \int 1_{B_{\mathbb{T}%
}(z,R)}(x+y)d\mu (x)d\nu (y) \\
&=&\int \mu (B_{\mathbb{T}}(z-y,R))d\nu (y) \\
&\geq &\int c\mu (B_{\mathbb{T}}(z-y,aR))d\nu (y)=c\mu \ast \nu (B_{\mathbb{T%
}}(z,aR)).
\end{eqnarray*}
\end{proof}

Since the Cantor measure $\mu $ is symmetric, combining these lemmas gives
the following useful fact.

\begin{corollary}
\label{C1}If $\mu $ is the uniform Cantor measure on the classical Cantor
set and $\nu $ is any symmetric measure on $[0,1]$, then%
\begin{equation*}
\mu \ast \nu (B(z,R))\geq 2\mu \ast \nu (B(z,R/3^{5}))\text{ for all }z\in
\lbrack 0,1]\text{ and }R\leq 1.
\end{equation*}
\end{corollary}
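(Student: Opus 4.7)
The plan is to chain together Lemmas \ref{L1}, \ref{L2}, and \ref{L3} in the order ``$\mathbb{R}$--doubling $\Rightarrow$ $\mathbb{T}$--doubling $\Rightarrow$ $\mathbb{T}$--doubling after convolution $\Rightarrow$ $\mathbb{R}$--doubling after convolution,'' keeping careful track of the multiplicative constants, since each passage between $\mathbb{R}$ and $\mathbb{T}$ costs a factor of $2$ according to Lemma \ref{L1}.

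First I would invoke Lemma \ref{L2}\ref{L2 2} on the Cantor measure $\mu$ (viewed on the line) to get
\[
\mu(B(z,R)) \;\geq\; 8\,\mu(B(z,R/3^{5})) \qquad \text{for all } z\in[0,1],\ R\leq 1.
\]
Since $\mu$ is symmetric about $1/2$, Lemma \ref{L1}\ref{L1 1} (with $a=3^{-5}$, $c=8$) upgrades this to a doubling inequality on the torus,
\[
\mu(B_{\mathbb T}(z,R)) \;\geq\; 4\,\mu(B_{\mathbb T}(z,R/3^{5})).
\]
Next, since $\nu$ is a measure on $[0,1]$, Lemma \ref{L3} (applied with the constants $a=3^{-5}$ and $c=4$ obtained in the previous step) gives
\[
\mu\ast\nu(B_{\mathbb T}(z,R)) \;\geq\; 4\,\mu\ast\nu(B_{\mathbb T}(z,R/3^{5})).
\]
Finally I would observe that $\mu\ast\nu$ is itself symmetric on $[0,1]$: both factors are symmetric (the symmetry of $\mu$ is used in Lemma \ref{L2}, and $\nu$ is symmetric by assumption), and the convolution of two symmetric measures on $\mathbb T$ is symmetric. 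Hence Lemma \ref{L1}\ref{L1 2} applies to $\mu\ast\nu$, giving
\[
\mu\ast\nu(B(z,R)) \;\geq\; \tfrac{4}{2}\,\mu\ast\nu(B(z,R/3^{5})) \;=\; 2\,\mu\ast\nu(B(z,R/3^{5})),
\]
which is exactly the claim.

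I do not expect a real obstacle here: the only thing to be careful about is bookkeeping of the constant, since each of the two applications of Lemma \ref{L1} halves the doubling constant. Starting with $8$ from Lemma \ref{L2}\ref{L2 2}, we arrive at $8/2/2 = 2$, which is why the statement is exactly ``$\geq 2$.'' The one small verification worth spelling out is the symmetry of $\mu\ast\nu$ needed for the last step, which follows from a one-line change-of-variables calculation using that both $\mu$ and $\nu$ are invariant under $x\mapsto -x$ on $\mathbb T$.
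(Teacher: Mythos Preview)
Your proposal is correct and follows exactly the same route as the paper's proof: Lemma~\ref{L2}\ref{L2 2} $\to$ Lemma~\ref{L1}\ref{L1 1} $\to$ Lemma~\ref{L3} $\to$ Lemma~\ref{L1}\ref{L1 2}, with the constant $8\to 4\to 4\to 2$. Your explicit check that $\mu\ast\nu$ is symmetric (needed for the final use of Lemma~\ref{L1}\ref{L1 2}) is a detail the paper's proof leaves implicit, so your write-up is slightly more complete on that point.
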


\begin{proof}
Lemma \ref{L2} gives that $\mu (B(z,R))\geq 8\mu (B(z,R/3^{5}))$ for all $%
z\in \lbrack 0,1]$ and $R\leq 1$. From Lemma \ref{L1}\ref{L1 1}, $\mu (B_{%
\mathbb{T}}(z,R))\geq 4\mu (B_{\mathbb{T}}(z,R/3^{5}))$ and then Lemma \ref%
{L3} implies%
\begin{equation*}
\mu \ast \nu (B_{\mathbb{T}}(z,R))\geq 4\mu \ast \nu (B_{\mathbb{T}%
}(z,R/3^{5}))\text{ for all }z\in \lbrack 0,1]\text{ and }R\leq 1.
\end{equation*}%
To complete the argument, call upon Lemma \ref{L1}\ref{L1 2}.
\end{proof}

We are now ready to answer the question asked by Fraser and Troscheit \cite%
{FT} in the negative.

\begin{proposition}
\label{FTQues}There is an absolutely continuous measure $\nu $ with density
function $f$ having the property that $\dim _{L}\nu >0,$ but $f\notin L^{p}$
for any $p>1$.
\end{proposition}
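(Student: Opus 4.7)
My plan is to construct $\nu$ as $\nu=\mu*\sigma$, where $\mu$ is the uniform Cantor measure on the middle-third Cantor set $C$ and $\sigma$ is the absolutely continuous probability measure on $[0,1]$ with density
\begin{equation*}
g(x)=\frac{c}{|x-1/2|\,(\log|x-1/2|)^{2}}\quad\text{for }0<|x-1/2|<1/2,
\end{equation*}
with $c$ chosen so that $\int g=1$. The substitution $u=-\log|x-1/2|$ shows $g\in L^{1}$, while for $p>1$ the resulting factor $e^{(p-1)u}$ forces $g\notin L^{p}$. Since $g$ is symmetric about $1/2$, $\sigma$ is symmetric in the paper's sense, so $\nu$ is absolutely continuous with density $f(z)=\int g(z-y)\,d\mu(y)$ (subtraction mod $1$). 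Corollary \ref{C1}, applied with $\nu=\sigma$, yields $\nu(B(z,R))\geq 2\nu(B(z,R/3^{5}))$ for all $z\in[0,1]$ and $R\leq 1$, so Remark \ref{up}\ref{up 2} gives $\dim_{L}\nu>0$.

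The harder step is to prove $f\notin L^{p}$ for every $p>1$. Young's inequality only delivers $\|f\|_{p}\leq\|g\|_{p}$, which is the wrong direction, so the singularity of $g$ must be shown to propagate, making $f$ large on a neighborhood of the translated Cantor set $1/2+C$. Put $\alpha=\log 2/\log 3$; for $s\in[0,1/2]$ with $\mathrm{dist}(s,C)\in[3^{-n-1},3^{-n}]$, choose $y_{0}\in C$ minimizing $|s-y_{0}|$. Because $g$ is decreasing near its singularity, $g(1/2+s-y)\gtrsim 3^{n}/n^{2}$ for every $y\in B(y_{0},2\cdot 3^{-n})$ (for $n$ sufficiently large). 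Combining with the self-similar lower bound $\mu(B(y_{0},r))\gtrsim r^{\alpha}$ valid for all $y_{0}\in C$ gives
\begin{equation*}
f(1/2+s)\geq\int_{B(y_{0},2\cdot 3^{-n})}g(1/2+s-y)\,d\mu(y)\gtrsim\frac{3^{n(1-\alpha)}}{n^{2}}.
\end{equation*}

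Setting $E_{n}:=\{s\in[0,1/2]:\mathrm{dist}(s,C)\in[3^{-n-1},3^{-n}]\}$, a count of Cantor gaps of level $\leq n$ (the $2^{n-1}$ gaps at level $n$ alone contribute $\asymp 2^{n}\cdot 3^{-n}$) shows $|E_{n}|\asymp 2^{n}\cdot 3^{-n}=3^{n(\alpha-1)}$. Hence
\begin{equation*}
\int_{E_{n}}f(1/2+s)^{p}\,ds\gtrsim 3^{n(\alpha-1)}\bigl(3^{n(1-\alpha)}/n^{2}\bigr)^{p}=\frac{3^{n(p-1)(1-\alpha)}}{n^{2p}},
\end{equation*}
and since $(p-1)(1-\alpha)>0$ whenever $p>1$, this blows up in $n$, forcing $\|f\|_{p}=\infty$. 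The main obstacle is justifying the pointwise lower bound on $f(1/2+s)$ uniformly across $s\in E_{n}$: one has to verify that the Cantor measure genuinely places mass $\gtrsim 3^{-n\alpha}$ at the scale $3^{-n}$ near any $y_{0}\in C$ without cancellation from neighbouring gaps, and to track the log-factor so that it does not corrupt the arithmetic on exponents. Both ingredients are standard consequences of the self-similarity of $\mu$.
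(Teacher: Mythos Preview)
Your construction is correct and genuinely different from the paper's. Both proofs take $\nu=\mu*\sigma$ with $\mu$ the middle-thirds Cantor measure and a symmetric $\sigma\in L^{1}\setminus\bigcup_{p>1}L^{p}$, and both invoke Corollary~\ref{C1} for $\dim_{L}\nu>0$. The divergence is in the choice of $g$ and in how $f\notin L^{p}$ is established. The paper builds $g$ from dilated Fej\'er kernels so that $\widehat{g}$ has many large coefficients on powers of $3$; combining this with the well-known fact that $|\widehat{\mu}(3^{k})|=|\widehat{\mu}(3)|\neq 0$ gives $(\widehat{f}(n))\notin\ell^{q}$ for every $q<\infty$, and Hausdorff--Young finishes. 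Your argument is purely spatial: you pick a $g$ with an explicit borderline-$L^{1}$ singularity at $1/2$, and then show that convolution with $\mu$ spreads this singularity over $1/2+C$. The Ahlfors regularity of $\mu$ (i.e., $\mu(B(y_{0},r))\asymp r^{\alpha}$ for $y_{0}\in C$) gives the pointwise lower bound $f(1/2+s)\gtrsim 3^{n(1-\alpha)}/n^{2}$ on the annulus $E_{n}$, and the gap-counting estimate $|E_{n}|\asymp 2^{n}3^{-n}$ (which is essential---a cruder bound would only handle large $p$) yields the divergent integral for every $p>1$. Your route avoids Fourier analysis entirely and is arguably more transparent geometrically; the paper's route is shorter once the harmonic-analytic machinery is available and exploits a different, spectral, feature of the Cantor measure.
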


\begin{proof}
We will give an explicit example. Let $K_{n}$ denote the $n$'th Fejer kernel
on $\mathbb{T=[}0,1]$, 
\begin{equation*}
K_{n}(x)=\sum_{j=-n}^{n}\left( 1-\frac{\left\vert j\right\vert }{n}\right)
e^{ijx},
\end{equation*}%
and inductively define integers $N_{m}\in \{3^{k}\}_{k=1}^{\infty }$ with $%
N_{1}=3$ and $N_{m+1}>3^{2^{m}}N_{m}$. Put%
\begin{equation*}
g(x)=\sum_{m=1}^{\infty }m^{-2}K_{3^{2^{m}}}(N_{m}x).
\end{equation*}%
Note that $g\in L^{1}$ since $\left\Vert K_{n}\right\Vert _{1}=1,$ and $g$
is symmetric. Let $\mu $ denote the uniform Cantor measure on the classical
Cantor set and let $\nu =g\ast \mu $ (where the convolution is on $\mathbb{T}
$).

Since $g\in L^{1},$ the measure $\nu $ is absolutely continuous (whether
viewed as a measure on $\mathbb{T}$ or $\mathbb{R}$). By Corollary \ref{C1} $%
\nu $ is uniformly perfect and hence has positive lower Assouad dimension,
as explained in Remark \ref{up}\ref{up 2}.

We will check that its density function, $f,$ does not belong to $L^{p}(%
\mathbb{T)}$ for any $p>1$ by verifying that the Fourier transform $(%
\widehat{f}(n))_{n=-\infty }^{\infty }\notin \ell ^{q}$ for any $q<\infty $.
An appeal to the Hausdorff-Young inequality will then imply $f\notin L^{p}(%
\mathbb{T)}$ for any $p>1$. Since a function supported on $[0,1]$ belongs to 
$L^{p}(\mathbb{R)}$ if and only if it belongs to $L^{p}(\mathbb{T)}$, this
will complete the argument.

It is immediate from the definitions that 
\begin{equation*}
\widehat{g}(n)=\frac{1}{m^{2}}\left( 1-\frac{\left\vert n\right\vert }{%
3^{2^{m}}}\right) \text{ if }n\in \{\pm 1,...,\pm 3^{2^{m}}\}\cdot N_{m} \ \ 
\text{for some } m \in \mathbb{N}
\end{equation*}%
and that $\widehat{g}(n)=0$ if $n\notin \bigcup\limits_{m}\{0,\pm 1,...,\pm
3^{2^{m}}\}\cdot N_{m}$. Thus $\widehat{g}(n)\geq 2/(3m^{2})$ on $\{\pm
1,...,\pm 3^{2^{m}-1}\}\cdot N_{m}$.

It is well known that$\left\vert \widehat{\mu }(3^{k})\right\vert
=\left\vert \widehat{\mu }(3)\right\vert \neq 0$ for all $k\geq 1$. Thus 
\begin{equation*}
\left\vert \widehat{g}(n)\widehat{\mu }(n)\right\vert =\left\vert \widehat{f}%
(n)\right\vert \geq \frac{2}{3m^{2}}\left\vert \widehat{\mu }(3)\right\vert
\end{equation*}%
for each $n\in S_{m}=\{\{\pm 1,...,\pm 3^{2^{m}-1}\}\cdot N_{m}\}\bigcap
\{3^{k}\}_{k=1}^{\infty }$. Since $\left\vert S_{m}\right\vert =2(2^{m}-1)$
and the choice of the integers $N_{m}$ ensures that the sets $S_{m}$ are
disjoint, we have%
\begin{equation*}
\sum_{n\in \mathbb{Z}}\left\vert \widehat{f}(n)\right\vert ^{q}\geq
\sum_{m=1}^{\infty }\left( \frac{2}{3m^{2}}\left\vert \widehat{\mu }%
(3)\right\vert \right) ^{q}2(2^{m}-1)=\infty
\end{equation*}%
for each $q<\infty $. Thus $(\widehat{f}(n))_{n=-\infty }^{\infty }\notin
\ell ^{q}$ for any $q<\infty $ and that completes the proof.
\end{proof}

\section{$\Phi $-Dimensions of Self-similar measures}

\label{sec:self-similar}

\subsection{Self-similar measures and separation properties\label{S:self-sim}%
}

In this section, our focus will be on self-similar measures that satisfy
various separation conditions. We begin with useful notation.

Consider the iterated function system (IFS), where the maps $%
S_{j}:X\rightarrow X$ are similarities with contraction factors $r_{j}$ for $%
j=0,...,m$ and $m\geq 1$. Assume, also, that we are given probabilities $%
\{p_{j}\}_{j=0}^{m}$, meaning $p_{j}>0$ and $\sum_{j=0}^{m}p_{j}=1$. By the 
\textbf{self-similar measure} $\mu $ associated with the IFS $%
\{S_{j}\}_{j=0}^{m}$ and the probabilities $\{p_{j}\}_{j=0}^{m}$, we mean
the unique probability measure $\mu $ on $X$ satisfying the property that
for any Borel set $E\subseteq X$ we have%
\begin{equation*}
\mu (E)=\sum_{j=0}^{m}p_{j}\mu (S_{j}^{-1}(E))\text{.}
\end{equation*}%
This measure will have as its support $K$, the unique, non-empty, compact
set $K$ satisfying $K=\bigcup\limits_{j=0}^{m}S_{j}(K)$, known as the 
\textbf{self-similar set} associated with the IFS.

Let $\Sigma $ be the set of all finite words on the alphabet $\{0,1,...,m\}$%
. Given $w\in \Sigma ,$ say $w=(j_{1},...,j_{n})$, let $%
w^{-}=(j_{1},...,j_{n-1})$, $S_{w}=S_{j_{1}}\circ \cdot \cdot \cdot \circ
S_{j_{n}},$ 
\begin{equation*}
r_{w}=\prod_{i=1}^{n}r_{j_{i}}\text{ and }p_{w}=\prod_{i=1}^{n}p_{j_{i}}.
\end{equation*}%
Note that $r_{w}$ is the contraction factor of $S_{w}$. Let 
\begin{equation*}
r_{\min}=\min \left\vert r_{j}\right\vert > 0
\end{equation*}%
and put 
\begin{equation*}
\Lambda _{n}=\{w\in \Sigma :\left\vert r_{w}\right\vert \leq r_{\min}^{n}%
\text{ and }\left\vert r_{w^{-}}\right\vert >r_{\min}^{n}\}.
\end{equation*}%
If the IFS consists of equicontractive similarities (all $r_{j}=r_{\min}\in
(0,1)$), then $\Lambda _{n}$ consists of the words $w$ of length $n$. More
generally, there exist $a,b>0$ such that $w\in \Lambda _{n}$ implies $an\leq
\left\vert w\right\vert \leq bn$. Note that for each $n$,%
\begin{equation*}
K=\bigcup\limits_{\sigma \in \Lambda _{n}}S_{\sigma }(K).
\end{equation*}

IFS satisfying the following definitions have been much studied.

\begin{defn}
The IFS $\{S_{j}\}_{j=0}^{m}$, and any associated self-similar measure, are
said to satisfy:

\begin{enumerate}
\item The\textbf{\ strong separation condition} (SSC) if the sets $S_{j}(K)$
are disjoint for $j=0,...,m$;

\item The \textbf{open set condition} (OSC) if there is a bounded,
non-empty, open set $U$ such that $S_{j}(U)\subseteq U$ for each $j$ and the
sets $S_{j}(U)$ are disjoint;

\item The \textbf{weak separation condition} (WSC) if there is some $x_{0}$ $%
\in \mathbb{R}$ and integer $M$ such that for any $n\in \mathbb{N}$ and
finite word $\tau ,$ any closed ball of radius $r_{\min}^{n}$ contains no
more than $M$ distinct points of the form $S_{\sigma }(S_{\tau }(x_{0}))$
for $\sigma \in \Lambda _{n}$.
\end{enumerate}
\end{defn}

The definition we have given of the WSC is a restricted case of the original
definition due to Lau and Ngai, \cite{LN}. Many equivalent properties can be
found in \cite{Ze}.

It is well known that 
\begin{equation*}
SSC\subseteq OSC\subseteq WSC
\end{equation*}%
and that both these inclusions are proper. For example, the IFS with the two
similarities $S_{0}(x)=x/2,$ $S_{1}(x)=x/2+1/2$ on $\mathbb{R}$ satisfies
the OSC, but not the SSC. The IFS $\mathcal{S}_{\rho }=\{S_{0}(x)=\rho
x,S_{1}(x)=\rho x+1-\rho \}$ where $\rho $ is the inverse of a Pisot number
and the IFS $\mathcal{S}_{d}=\{S_{j}(x)=x/d+(d-1)jx/(dm):j=0,1,...,m\}$
where $2\leq d\leq m$ are integers, satisfy the WSC but not the OSC. In the
case of the IFS $\mathcal{S}_{\rho }$, any associated self-similar measure
is known as a Bernoulli convolution and is said to be biased if $p_{0}\neq
p_{1}$. In the case of the IFS $\mathcal{S}_{d},$ for a suitable choice of
probabilities, the self-similar measure is the $m$-fold convolution of the
uniform Cantor measure on the Cantor set with contraction factor $1/d$.

\subsection{Self-similar measures satisfying the strong separation condition}

It was shown in \cite{Fr} that self-similar sets arising from an IFS that
satisfies the open set condition have equal upper and lower Assouad
dimensions (and hence also all $\Phi $-dimensions). This is not true for
self-similar measures. For instance, the measure of Example \ref{FTNotqA} is
the self-similar measure arising from the IFS with $S_{0}(x)=x/2$, $%
S_{1}(x)=x/2+1/2$ and probabilities $2/3,$ $1/3$. This IFS satisfies the
open set condition and yet we have $\dim _{qA}\mu =\infty ,$ while $%
\overline{\dim }_{\Phi }\mu <\infty $ for all non-zero constant functions $%
\Phi $.

However, we cannot produce such an example with a self-similar measure that
satisfies the strong separation property, as our next result shows.

\begin{theorem}
\label{SSC}Assume $\mu $ is a self-similar measure that satisfies the strong
separation condition. For any dimension function $\Phi $ we have%
\begin{equation*}
\underline{\dim }_{\Phi }\mu =\min \{\dim _{\mathrm{loc}}\mu (z):z\in 
\mathrm{supp}\mu \}
\end{equation*}%
and 
\begin{equation*}
\overline{\dim }_{\Phi }\mu =\max \{\dim _{\mathrm{loc}}\mu (z):z\in \mathrm{%
supp}\mu \}.
\end{equation*}
\end{theorem}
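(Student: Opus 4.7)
The plan is to use SSC to reduce $\mu$-measures of balls to probabilities of symbolic cylinders, after which everything collapses to arithmetic on weighted averages. In view of Proposition \ref{basic}, which gives
$$\dim _{L}\mu \leq \underline{\dim }_{\Phi }\mu \leq \inf_{z}\underline{\dim }_{\mathrm{loc}}\mu (z)\leq \sup_{z}\overline{\dim }_{\mathrm{loc}}\mu (z)\leq \overline{\dim }_{\Phi }\mu \leq \dim _{A}\mu ,$$
it suffices to show that all five middle quantities lie between $\alpha_{\min} := \min_j \log p_j/\log r_j$ and $\alpha_{\max} := \max_j \log p_j/\log r_j$.

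The key lemma I would establish is a uniform comparability: setting $\gamma = \min_{j \neq k} \mathrm{dist}(S_j(K), S_k(K))$, which is positive by SSC, for every $z \in K$ with symbolic address $(w_1, w_2, \dots)$ and every sufficiently small $R > 0$ there is a prefix $w = w_1 \cdots w_n$ with $r_w \sim R$ such that $\mu(B(z,R)) \sim p_w$, with comparability constants depending only on the IFS. The lower bound comes from choosing $n$ so that $S_w(K) \subseteq B(z,R)$, which gives $\mu(B(z,R)) \geq p_w$. The upper bound uses SSC decisively: if $r_{w^-} \gamma > 2R$, then $B(z,R) \cap K \subseteq S_{w^-}(K)$, because the level-$n$ sibling cylinders branching off from $S_{w^-}$ are separated by at least $\gamma r_{w^-}$; hence $\mu(B(z,R)) \leq p_{w^-} = p_w/p_{w_n} \leq p_w/\min_j p_j$.

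Applying the lemma at two scales $r < R$ with prefixes $w$ and $w' = wv$ of the same address gives
$$\frac{\mu(B(z,R))}{\mu(B(z,r))} \sim \frac{p_w}{p_{w'}} = \frac{1}{p_v} \sim \left(\frac{R}{r}\right)^{\log p_v / \log r_v}.$$
The exponent $\log p_v / \log r_v = \sum_i \log p_{v_i} / \sum_i \log r_{v_i}$ is a weighted average of $\{\log p_j/\log r_j : 0 \leq j \leq m\}$ with positive weights $-\log r_{v_i}$, and so lies in $[\alpha_{\min}, \alpha_{\max}]$. This Assouad-type bound holds for all pairs $r < R$ (no $\Phi$-restriction needed), yielding $\dim_A \mu \leq \alpha_{\max}$ and $\dim_L \mu \geq \alpha_{\min}$. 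Conversely, evaluating the key lemma at the fixed point $z_j$ of $S_j$ (whose address is constant equal to $j$) shows $\dim_{\mathrm{loc}} \mu(z_j) = \log p_j/\log r_j$; taking the maximum and minimum over $j$ forces $\sup_z \overline{\dim}_{\mathrm{loc}} \mu(z) \geq \alpha_{\max}$ and $\inf_z \underline{\dim}_{\mathrm{loc}} \mu(z) \leq \alpha_{\min}$. Combined with Proposition \ref{basic}, the entire chain collapses to the common values $\alpha_{\min}$ and $\alpha_{\max}$, with the extrema attained at the fixed points, giving the stated conclusion.

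The main technical obstacle is the uniform comparability $\mu(B(z,R)) \sim p_w$ of the first step, where SSC enters crucially through the positive separation constant $\gamma$. Without SSC, balls near the seams between sibling cylinders can absorb mass from several branches simultaneously and the comparability constants blow up; indeed Example \ref{FTNotqA} (a self-similar measure satisfying only the OSC) shows that $\dim_{qA}\mu$ can be infinite while local dimensions remain bounded, so this geometric-separation step is genuinely where the hypothesis is doing the work.
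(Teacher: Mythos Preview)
Your proof is correct and follows essentially the same strategy as the paper: use the SSC separation constant to trap balls inside single cylinders (the paper does this via a common-ancestor argument with a fixed depth parameter $L$, which is precisely the adjustment needed to reconcile your lower-bound choice of $w$ with the upper-bound condition $r_{w^-}\gamma > 2R$, since these are not automatically compatible for the same $n$), then bound $\mu(B(z,R))/\mu(B(z,r))$ by $p_v^{-1}$ and use $r_j^{\alpha_{\max}} \leq p_j \leq r_j^{\alpha_{\min}}$. The paper cites Falconer for the range of local dimensions rather than computing them at fixed points as you do, but the content is identical.
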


\begin{proof}
Assume the measure $\mu $ arises from the IFS $\{S_{j}\}_{j=0}^{m}$ that
satsifies the SSC, with probabilities $\{p_{j}\},$ and that $K$ is the
associated self-similar set. It is well known (see \cite[ch. 11]{Fa}) that
if the contraction factor of $S_{j}$ is $r_{j}$, then 
\begin{equation*}
\{\dim _{\mathrm{loc}}\mu (z):z\in \mathrm{supp}\mu \}=\left[ \min_{j}\frac{%
\log p_{j}}{\log r_{j}},\max_{j}\frac{\log p_{j}}{\log r_{j}}\right]
:=[\theta ,\Theta ].
\end{equation*}%
Of course, this means $r_{j}^{\Theta }\leq p_{j}\leq r_{j}^{\theta }$ for
all $j$.

As the upper and lower $\Phi $-dimensions are bounded (below and above,
respectively) by the maximum and minimum local dimensions (Proposition \ref%
{basic}), it will be enough to show that there are constants $C_{0},C_{1}>0$
such that for all $x\in \mathrm{supp}\mu $, $R\leq \mathrm{diam}(\mathrm{supp%
} \mu)$ and $0<r\leq R^{1+\Phi (R)}$, we have 
\begin{equation*}
C_{0}\left( \frac{R}{r}\right) ^{\theta }\leq \frac{\mu (B(x,R))}{\mu
(B(x,r))}\leq C_{1}\left( \frac{R}{r}\right) ^{\Theta }
\end{equation*}%
to see that $\overline{\dim }_{\Phi }\mu =\Theta $ and $\underline{\dim }%
_{\Phi }\mu =\theta $.

Fix such $x,R$ and $r\leq R^{1+\Phi (R)}$ and choose integers $n,m$ so that $%
r_{\min}^{n}\leq R\leq r_{\min}^{n-1}$ and $r_{\min}^{m}\leq r\leq
r_{\min}^{m-1}$. Obtain $w\in \Lambda _{n}$ and $w\sigma \in \Lambda _{m}$
such that $x\in S_{w\sigma }(K)$. Then%
\begin{equation*}
\left\vert r_{w}\right\vert \leq r_{\min}^{n}\leq R<r_{\min}^{-2}\left\vert
r_{w}\right\vert
\end{equation*}%
and 
\begin{equation*}
\left\vert r_{w\sigma }\right\vert \leq r_{\min}^{m}\leq
r<r_{\min}^{-2}\left\vert r_{w\sigma }\right\vert ,
\end{equation*}%
so 
\begin{equation*}
\frac{R}{r}\geq \frac{\left\vert r_{w}\right\vert }{r_{\min}^{-2}\left\vert
r_{w\sigma }\right\vert }=\frac{r_{\min}^{2}}{\left\vert r_{\sigma
}\right\vert }.
\end{equation*}%
Since $S_{w}(K)\subseteq B(x,R)$ and $S_{w\sigma }(K)\subseteq B(x,r)$ we
have 
\begin{equation*}
\mu (B(x,r))\geq p_{w\sigma }\text{ and }\mu (B(x,R))\geq p_{w}.
\end{equation*}

Because the IFS\ satisfies the strong separation condition, there is some $%
\varepsilon >0$ such that $d(S_{i}(K),S_{j}(K))\geq \varepsilon $ for all $%
i\neq j$. Consequently, for any word $\tau $ and $i\neq j$, $d(S_{\tau
i}(K),S_{\tau j}(K))\geq \varepsilon \left\vert r_{\tau }\right\vert $.

Choose an integer $L$ such that $\varepsilon r_{\min}^{-(L-1)}>2$. Let $W$
be the set of words $v\in \Lambda _{n}$ such that $S_{v}(K)\bigcap
B(x,R)\neq \emptyset $, so $B(x,R)\bigcap K\subseteq \bigcup\limits_{v\in
W}S_{v}(K)$. We claim that the words $v\in W$ must have a common ancestor $%
\tau \in \Lambda _{n-L}$. If not, there would be a pair \thinspace $%
v,v^{\prime }\in W $ with different ancestors at level $n-L$. But, then, 
\begin{equation*}
d(S_{v}(K),S_{v^{\prime }}(K))\geq \varepsilon r_{\min}^{n-L},
\end{equation*}%
which exceeds the diameter of $B(x,R),$ and this is impossible. Thus $%
B(x,R)\bigcap K\subseteq S_{\tau }(K)$ where $\tau $ is the common ancestor.
Moreover, as $w\in W$, $p_{\tau }\leq p_{w}(\min p_{i})^{-L}$, so 
\begin{equation*}
\mu (B(x,R))\leq \mu (S_{\tau }(K))=p_{\tau }\leq p_{w}c_{1}
\end{equation*}%
for $c_{1}=(\min p_{i})^{-L}$.

These facts, together with the definition of $\Theta ,$ implies%
\begin{equation*}
\frac{\mu (B(x,R))}{\mu (B(x,r))}\leq \frac{p_{w}c_{1}}{p_{w\sigma }}\leq 
\frac{c_{1}}{p_{\sigma }}\leq c_{1}\left( \frac{1}{\left\vert r_{\sigma
}\right\vert }\right) ^{\Theta }\leq C_{1}\left( \frac{R}{r}\right) ^{\Theta
}
\end{equation*}%
for a suitable choice of $C_{1}$.

As a similar upper bound can be found for $\mu (B(x,r))$, the lower bound
follows in the same manner.
\end{proof}

\subsection{Self-similar measures satisfying the weak separation condition}

In this subsection we will assume the measure $\mu $ arises from an IFS $%
\{S_{j}\}_{j=0}^{m}$ of similarities $S_{j}(x)=r_{j}x+d_{j}$ on $\mathbb{R}$
that satisfies the WSC. We will also assume that the self-similar set (and
support of the measure) $K=[0,1]$. We continue to use the notation of the
previous subsection.

It was proven in \cite{HHR} that such measures have the property that there
is some $a>0$ such that 
\begin{equation}
\left\vert S_{\sigma }(w)-S_{\tau }(z)\right\vert \geq ar_{\min}^{n}
\label{gap}
\end{equation}%
whenever $\sigma, \tau \in \Lambda _{n}$, $w,z\in \{0,1\}$ and  $S_\sigma(w)
\neq S_\tau(z)$. This property is very helpful in studying the dimensional
properties of $\mu $.

It is convenient to introduce further notation. For each $n\in \mathbb{N}$,
let $h_{1},...,h_{s_{n}}$ denote the set of elements of $\{S_{\sigma }(0),$ $%
S_{\sigma }(1):\sigma \in \Lambda _{n}\},$ listed in increasing order. The
intervals, $[h_{j},h_{j+1}],$ are called the \textbf{net intervals of level }%
$n$\textbf{.} In what follows $\Delta _{n}$ will always denote a net
interval of level $n$ and $\Delta _{n}(x)$ will be a level $n$ net interval
containing $x$ (noting that there could be two choices if $x$ is a boundary
point $h_{i}$.) We write $\ell (I)$ for the length of the interval $I$. From
(\ref{gap}) it follows that 
\begin{equation*}
ar_{\min}^{n}\leq \ell (\Delta _{n})\leq r_{\min}^{n}.
\end{equation*}

Put%
\begin{equation*}
P_{n}(\Delta _{n})=\sum_{\substack{ w\in \Lambda _{n}  \\ %
S_{w}[0,1]\supseteq \Delta _{n}}}p_{w}.
\end{equation*}%
Let%
\begin{equation*}
p=\min p_{j}^{M}
\end{equation*}
where $M$ is the maximum length of any word $w$ such that there exists an
integer $m$ and word $\sigma \in \Lambda _{m-1}$ with $\sigma w\in \Lambda
_{m}$.

The definitions ensure that if $\Delta _{n}\subseteq \Delta _{n-1},$ then 
\begin{equation}
P_{n-1}(\Delta _{n-1})\geq P_{n}(\Delta _{n})\geq pP_{n-1}(\Delta _{n-1}).
\label{Pn}
\end{equation}%
Furthermore, as $\ell (S_{\sigma }[0,1])\leq r_{\min}^{n}$ whenever $\sigma
\in \Lambda _{n},$ we have%
\begin{equation}
\mu (B(x,r_{\min}^{n}))\geq P_{n}(\Delta _{n}(x))\geq \mu (\Delta _{n}(x)).
\label{P/mu}
\end{equation}

It was shown in \cite[Cor. 4.6]{HHT} that these measures $\mu $ satisfy $%
\dim _{qA}\mu <\infty $ if and only if $\mu $ has the doubling-like property
that for every $\varepsilon >0$ there is a constant $C$ such that 
\begin{equation*}
\mu (B(x,R))\leq CR^{-2\varepsilon }\mu (B(x,R/2))
\end{equation*}%
for all $x\in \mathrm{supp}\mu $ and $0<R\leq 1$. Motivated by this, we
introduce the following definition of $\Phi $-doubling.

Recall that a function $\Phi $ is said to be \textbf{doubling} if there is a
constant $c>0$ such that 
\begin{equation*}
\Phi (x)\leq c\Phi (x/2)
\end{equation*}%
whenever $x>0$. Doubling dimension functions include $\Phi =\delta $, $\Phi
(x)=1/|\log x|$ and $\Phi (x)=\log |\log x|/|\log x|$.

\begin{defn}
We will say the measure $\mu $ on $X$ is $\Phi $\textbf{-doubling} if there
are constants $C\geq 1$, $\gamma >0$ such that%
\begin{equation*}
\mu (B(x,R))\leq CR^{-\gamma \Phi (R)}\mu (B(x,R/2))
\end{equation*}%
for all $x\in \mathrm{supp}\mu $ and $0<R\leq 1$.
\end{defn}

Notice that if $\Phi =0,$ this is the usual definition of a doubling measure.

Given $n\in \mathbb{N}$, let%
\begin{equation*}
\phi (n)=n\Phi (r_{\min}^{n})\geq 0.
\end{equation*}%
It is easy to check that if $\Phi $ is a doubling function, then $\mu $ is $%
\Phi $-doubling if and only if there is a (possibly different) constant $%
C\geq 1$ such that 
\begin{equation}
\mu (B(x,r_{\min}^{n}))\leq C^{1+\phi (n)}\mu (B(x,r_{\min}^{n+1}))
\label{D1}
\end{equation}%
for all $x\in \mathrm{supp}\mu $ and $n\in \mathbb{N}$. Note that a repeated
application of (\ref{D1}) shows that for each positive integer $k$, there is
a constant $C_{k}\geq 1$ such that 
\begin{equation*}
\mu (B(x,r_{\min}^{n}))\leq C_{k}^{1+\phi (n)}\mu (B(x,r_{\min}^{n+k})).
\end{equation*}

The property of being $\Phi $-doubling can be described in terms of the
measure of net intervals.

\begin{lemma}
\label{DoublingRem}Assume $\mu $ is a self-similar measure that satisfies
the WSC and has support $[0,1]$. Then $\mu $ is $\Phi $-doubling if and only
if there is a constant $C_{0}\geq 1$ such that 
\begin{equation}
\mu (\Delta _{n})\geq C_{0}^{-(1+\phi (n))}\mu (\Delta _{n}^{\ast })
\label{PhiDoubling}
\end{equation}%
whenever $\Delta _{n}^{\ast }$ is a level $n$ net interval adjacent to the
level $n$ net interval $\Delta _{n}$.
\end{lemma}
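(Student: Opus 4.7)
The plan is to translate between balls and net intervals, using the geometric bounds $ar_{\min}^n \leq \ell(\Delta_n) \leq r_{\min}^n$ (from \eqref{gap}) and the already-noted equivalence of $\Phi$-doubling with the discrete form \eqref{D1}. The doubling of $\Phi$ will ensure that $\phi(n \pm k) \leq c(1+\phi(n))$ for any bounded $k$, which is the crucial point for controlling accumulated exponents across a bounded number of iterations.

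For the forward direction, assume \eqref{D1}. Let $\Delta_n^{\ast}$ be a level $n$ net interval adjacent to $\Delta_n$, and let $x$ be the midpoint of $\Delta_n$. Then $B(x, ar_{\min}^n/2) \subseteq \Delta_n$, while $\Delta_n^{\ast} \subseteq B(x, 2r_{\min}^n)$ since the union of the pair spans length at most $2r_{\min}^n$. Fix integers $k_1, k_2$ so that $r_{\min}^{n-k_1} \geq 2r_{\min}^n$ and $r_{\min}^{n+k_2} \leq ar_{\min}^n/2$, and iterate \eqref{D1} the bounded number $k_1+k_2$ of times between these radii. This yields
\[
\mu(\Delta_n^{\ast}) \leq \mu(B(x, r_{\min}^{n-k_1})) \leq C_0^{1+\phi(n)}\mu(B(x, r_{\min}^{n+k_2})) \leq C_0^{1+\phi(n)}\mu(\Delta_n),
\]
which is \eqref{PhiDoubling}.

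For the converse, assume \eqref{PhiDoubling} and fix $x \in \mathrm{supp}\,\mu$. Since each level $n$ net interval has length at least $ar_{\min}^n$, the ball $B(x, r_{\min}^n)$ meets at most a fixed number $K_0$ (depending only on $a$) of level $n$ net intervals, which form a contiguous chain of neighbours of $\Delta_n(x)$. Applying \eqref{PhiDoubling} at most $K_0$ times along this chain gives
\[
\mu(B(x, r_{\min}^n)) \leq K_0\, C_0^{K_0(1+\phi(n))}\mu(\Delta_n(x)).
\]
Combining \eqref{P/mu} at level $n+1$ with \eqref{Pn} yields the complementary lower bound
\[
\mu(B(x, r_{\min}^{n+1})) \geq P_{n+1}(\Delta_{n+1}(x)) \geq p\,P_n(\Delta_n(x)) \geq p\,\mu(\Delta_n(x)),
\]
and the two together produce \eqref{D1} with a suitable constant. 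The main bookkeeping obstacle in both directions is to prevent the accumulated exponent from blowing up during the iteration; this is exactly what the doubling of $\Phi$ guarantees, via the comparison $\phi(n \pm k) \leq c(1+\phi(n))$ for bounded $|k|$, after which both implications reduce to the geometric chain argument supported by \eqref{gap}.
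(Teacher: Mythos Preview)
Your proof is correct and follows essentially the same strategy as the paper: in both directions you pass between balls and net intervals via the midpoint of $\Delta_n$ and the uniform length bounds $ar_{\min}^n \leq \ell(\Delta_n) \leq r_{\min}^n$, then run a chain argument over a bounded number of adjacent net intervals. The only notable difference is in the converse, where you pivot through $\mu(\Delta_n(x))$ using \eqref{Pn} and \eqref{P/mu} to lower bound $\mu(B(x,r_{\min}^{n+1}))$; the paper instead uses the simpler containment $\Delta_n \subseteq B(x,r_{\min}^n)$ to compare $\mu(B(x,2r_{\min}^n))$ with $\mu(B(x,r_{\min}^n))$ directly, avoiding the $P_n$ machinery altogether.
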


\begin{proof}
Fix $a>0$ so that $\ell (\Delta _{n})\geq ar_{\min}^{n}$ for all net
intervals $\Delta _{n}$ of level $n$ and all $n\in \mathbb{N}$.

Suppose $\mu $ is $\Phi $-doubling. Let $\Delta _{n}$ be any level $n$ net
interval and $\Delta _{n}^{\ast }$ be an adjacent net interval. Let $x$
denote the midpoint of $\Delta _{n}$.

The doubling assumption ensures that for a suitable constant $C\geq 1,$ 
\begin{eqnarray*}
\mu (\Delta _{n}) &=&\mu (B(x_{\Delta _{n}},\ell (\Delta _{n})/2))\geq \mu
(B(x_{\Delta _{n}},ar_{\min}^{n}/2)) \\
&\geq &C^{-(1+\phi (n))}\mu (B(x_{\Delta _{n}},2r_{\min}^{n}))\geq
C^{-(1+\phi (n))}\mu (\Delta _{n}^{\ast }),
\end{eqnarray*}%
where the last inequality holds because $B(x_{\Delta
_{n}},2r_{\min}^{n})\supseteq \Delta _{n}^{\ast }$.

Conversely, assume there exists a constant $C_{0}\geq 1$ such that for all $%
n $, $\mu (\Delta _{n})\geq C_{0}^{-(1+\phi (n))}\mu (\Delta _{n}^{\ast })$.
Fix $x\in \lbrack 0,1]$ and suppose $x\in \Delta _{n}$. (If $x$ is a
boundary point of a net interval, choose either net interval.) Let $\Delta
_{n}^{(1)}$ be the level $n$ net interval immediately to its right, and more
generally, let $\Delta _{n}^{(j)}$ be the net interval of level $n$
immediately to the right of $\Delta _{n}^{(j-1)}$ (should it exist). By
repeated application of (\ref{PhiDoubling}), 
\begin{equation*}
\mu (B(x,r_{\min}^{n}))\geq \mu (\Delta _{n})\geq C_{0}^{-(1+\phi (n))}\mu
(\Delta _{n}^{(1)})\geq C_{0}^{-k(1+\phi (n))}\mu (\Delta _{n}^{^{(k)}}).
\end{equation*}%
Choose $k$ so that $[x,x+2r_{\min}^{n}]\bigcap [0,1]\subseteq
\bigcup\limits_{j=0}^{k}\Delta _{n}^{(j)}$; notice $k\leq 1+2/a$. For the
constant $C_{1}=C_{0}^{k}$, we have 
\begin{equation*}
\mu ([x,x+2r_{\min}^{n}])\leq \mu \left( \bigcup\limits_{j=0}^{k}\Delta
_{n}^{(j)}\right) \leq kC_{1}^{1+\phi (n)}\mu (\Delta _{n})\leq
kC_{1}^{1+\phi (n)}\mu (B(x,r_{\min}^{n})).
\end{equation*}%
We similarly bound $\mu ([x-2r_{\min}^{n},x])$ and hence deduce that 
\begin{equation*}
\mu (B(x,r_{\min}^{n}))\geq \frac{1}{2k}C_{1}^{-(1+\phi (n))}\mu
(B(x,2r_{\min}^{n})).
\end{equation*}%
This suffiices to prove that $\mu $ is $\Phi $-doubling.
\end{proof}

We now characterize $\Phi $-doubling in terms of the upper $\Phi $%
-dimensions.

\begin{proposition}
\label{T:doubling} Assume $\mu $ is a self-similar measure on $\mathbb{R}$
that satisfies the weak separation condition and has support $[0,1]$.
Suppose that $\Phi $ is an increasing, doubling, dimension function. Then $%
\overline{\dim }_{\Phi }\mu <\infty $ if and only if $\mu $ is $\Phi $%
-doubling.
\end{proposition}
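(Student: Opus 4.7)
I would prove the two implications separately: the forward direction is a short application of the dimension bound, while the reverse uses the net-interval machinery of the WSC together with Lemma \ref{DoublingRem} and the estimates (\ref{Pn})--(\ref{P/mu}).

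For the forward direction, suppose $\overline{\dim}_\Phi\mu \leq \alpha < \infty$. Given $x \in \mathrm{supp}\,\mu$ and small $R$, I would split into two cases according to whether $R/2 \leq R^{1+\Phi(R)}$ or not. In the first case $r = R/2$ is an admissible scale in the dimension bound, so $\mu(B(x,R))/\mu(B(x,R/2)) \leq C 2^\alpha$, which is trivially bounded by $C 2^\alpha R^{-\gamma\Phi(R)}$ for any $\gamma > 0$. In the second case $R^{1+\Phi(R)} \leq R/2$, so $B(x, R^{1+\Phi(R)}) \subseteq B(x, R/2)$ and the dimension bound at $r = R^{1+\Phi(R)}$ gives $\mu(B(x,R))/\mu(B(x,R/2)) \leq \mu(B(x,R))/\mu(B(x, R^{1+\Phi(R)})) \leq C R^{-\alpha\Phi(R)}$. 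Either way $\Phi$-doubling holds at small scales with $\gamma = \alpha$, and at scales $R$ bounded away from $0$ it follows at once from compactness of $\mathrm{supp}\,\mu$ together with the positivity of $\mu$ on every open ball centered at a support point.

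For the reverse direction, assume $\mu$ is $\Phi$-doubling. Fix $x, R, r$ with $r \leq R^{1+\Phi(R)} \leq R \leq 1$, and choose $n, m$ with $r_{\min}^n \leq R \leq r_{\min}^{n-1}$ and $r_{\min}^m \leq r \leq r_{\min}^{m-1}$; the scale constraint forces $m - n \geq (n-1)\Phi(R) - 1$. I would estimate $\mu(B(x,R))/\mu(B(x,r))$ in three steps: (i) $B(x,R)$ meets a uniformly bounded number $L_0$ of level-$n$ net intervals, so iterating Lemma \ref{DoublingRem} along an adjacency chain yields $\mu(B(x,R)) \leq L_0 C_0^{L_0(1+\phi(n))}\mu(\Delta_n(x)) \leq L_0 C_0^{L_0(1+\phi(n))} P_n(\Delta_n(x))$, the last step by (\ref{P/mu}); (ii) since $r \geq r_{\min}^m$, $\mu(B(x,r)) \geq \mu(B(x, r_{\min}^m)) \geq P_m(\Delta_m(x))$ by (\ref{P/mu}) again; (iii) iterating (\ref{Pn}) gives $P_n(\Delta_n(x))/P_m(\Delta_m(x)) \leq p^{-(m-n)} \sim (R/r)^\beta$ with $\beta = \log p/\log r_{\min} > 0$. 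Combining, $\mu(B(x,R))/\mu(B(x,r)) \lesssim C_0^{L_0\phi(n)}(R/r)^\beta$. It remains to show $C_0^{L_0\phi(n)} \lesssim (R/r)^c$ for some constant $c$. A dichotomy on $(n-1)\Phi(R)$ suffices: if $(n-1)\Phi(R) \geq 2$ then $m - n \geq (n-1)\Phi(R)/2$ while $\phi(n) = n\Phi(r_{\min}^n) \leq n\Phi(R)$ (using that $\Phi$ is increasing and $r_{\min}^n \leq R$), so $\phi(n)/(m-n) \leq 2n/(n-1)$ is bounded and $C_0^{L_0\phi(n)} \leq (R/r)^c$ for an explicit $c$; if $(n-1)\Phi(R) < 2$ then $\phi(n) \leq n\Phi(R) \lesssim 1$ is absolutely bounded. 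In either case $\mu(B(x,R))/\mu(B(x,r)) \lesssim (R/r)^{\beta + c}$, giving $\overline{\dim}_\Phi\mu < \infty$.

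The main obstacle is the reverse direction: a naive iteration of $\Phi$-doubling at halving scales does \emph{not} produce a polynomial bound on $\mu(B(x,R))/\mu(B(x,r))$, since the accumulated telescoping costs $\prod_j (R/2^j)^{-\gamma \Phi(R/2^j)}$ grow super-polynomially in $R/r$ whenever $\Phi(R)|\log R|$ is unbounded (as happens for any non-zero constant $\Phi$). The net-interval machinery sidesteps this by producing a one-shot polynomial factor $(R/r)^\beta$ from the positivity of the transition weights in (\ref{Pn}), leaving only the lateral factor $C_0^{L_0\phi(n)}$ to control --- and this is exactly manageable because the admissibility constraint $r \leq R^{1+\Phi(R)}$ forces $m - n \gtrsim \phi(n)$ precisely when $\phi(n)$ is large.
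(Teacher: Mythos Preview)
Your proof is correct and follows essentially the same approach as the paper, particularly in the reverse direction where both arguments use Lemma~\ref{DoublingRem} together with the net-interval estimates (\ref{Pn})--(\ref{P/mu}) to produce the polynomial factor $p^{-(m-n)}$ and then control the residual lateral factor $C_0^{O(\phi(n))}$ against $m-n$. Your forward direction is slightly more direct than the paper's: you establish the ball form of $\Phi$-doubling immediately from the dimension bound via the dichotomy on whether $R/2$ is an admissible scale, whereas the paper first proves the net-interval condition (\ref{PhiDoubling}) and then invokes Lemma~\ref{DoublingRem}; your route avoids net intervals entirely on that side and works for any compactly supported measure.
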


\begin{proof}
Fix $a>0$ so that $\ell (\Delta _{n})\geq ar_{\min}^{n}$ for all level $n$
net intervals $\Delta _{n}$.

First, suppose that $d=\overline{\dim }_{\Phi }\mu <\infty $ and fix $%
\varepsilon >0$. By the definition of the upper $\Phi $-dimension, there is
a constant $C=C(\varepsilon )$ such that for any suitably large integer $n$
we have 
\begin{equation}
\frac{\mu (B(x,2r_{\min}^{n}))}{\mu (B(x,ar_{\min}^{n+\phi (n)}/2)}\leq
Cr_{\min}^{-}{}^{\phi (n)(d+\varepsilon )}\text{ for all }x\in \lbrack 0,1]%
\text{.}  \label{PhiDim}
\end{equation}

Consider any level $n$ net interval of level $\Delta _{n},$ with midpoint $x$%
. Then 
\begin{equation*}
B(x,ar_{\min}^{n+\phi (n)}/2) \cap [0,1] \subseteq \Delta _{n},
\end{equation*}%
while $B(x,2r_{\min}^{n})$ contains both $\Delta _{n}$ and the two adjacent
level $n$ net intervals. Let $\Delta _{n}^{\ast }$ denote either adjacent
interval. Then (\ref{PhiDim}) gives 
\begin{eqnarray*}
\mu (\Delta _{n}) &\geq &\mu (B(x,ar_{\min}{}^{n+\phi (n)}/2))\geq
C^{-1}r_{\min}{}^{\phi (n)(d+\varepsilon )}\mu (B(x,2r_{\min}^{n})) \\
&\geq &C^{-1}r_{\min}^{\phi (n)(d+\varepsilon )}\mu (\Delta _{n}^{\ast
})\geq C_{1}^{-(1+\phi (n))(d+\varepsilon )}\mu (\Delta _{n}^{\ast })
\end{eqnarray*}%
for $C_{1}=\max (r_{\min}^{-1},C^{1/(d+\varepsilon )})\geq 1$. By Lemma \ref%
{DoublingRem}, $\mu $ is $\Phi $-doubling.

Conversely, assume $\mu $ is $\Phi $-doubling. Fix $x\in \lbrack 0,1]$ and $%
N\in \mathbb{N}$. Let $\Delta _{N}$ denote the level $N$ net interval
containing $x$ (taking either, if there is a choice) and let $\Delta
_{N}^{R} $ and $\Delta _{N}^{L}$ denote the two adjacent, level $N$ net
intervals to the right and left respectively.

According to the Lemma, the $\Phi $-doubling condition implies 
\begin{equation*}
\mu (\Delta _{N}^{R})\leq C_{0}^{1+\phi (N)}\mu (\Delta _{N})
\end{equation*}%
and similarly for $\mu (\Delta _{N}^{L})$. Since $B(x,r_{\min}^{N}a)\cap
\lbrack 0,1]\subseteq \Delta _{N}\cup \Delta _{N}^{R}\cup \Delta _{N}^{L}$, (%
\ref{P/mu}) implies%
\begin{equation*}
\mu (B(x,r_{\min}^{N}a))\leq \mu (\Delta _{N}\cup \Delta _{N}^{R}\cup \Delta
_{N}^{L})\leq 3C_{0}^{1+\phi (N)}\mu (\Delta _{N})\leq 3C_{0}^{1+\phi
(N)}P_{N}(\Delta _{N}).
\end{equation*}%
Choose any integer 
\begin{equation*}
n\geq N(1+\Phi (r_{\min}^{N+1}a)).
\end{equation*}%
Let $\Delta _{n}\subseteq \Delta _{N}$ be the net interval of level $n$
containing $x$. From (\ref{Pn}) and (\ref{P/mu}) we see that%
\begin{equation*}
\frac{\mu (B(x,r_{\min}^{N}a))}{\mu (B(x,r_{\min}^{n}))}\leq 3C_{0}^{1+\phi
(N)}\frac{P_{N}(\Delta _{N}(x))}{P_{n}(\Delta _{n}(x))}\leq 3C_{0}^{1+\phi
(N)}p^{-(n-N)}.
\end{equation*}%
The doubling assumption of $\Phi $ ensures there is some $\beta >0$
(independent of $N)$ such that 
\begin{equation}
\Phi (r_{\min}^{N+1}a)\geq \beta \Phi (r_{\min}^{N}),  \label{beta}
\end{equation}%
so $n-N=N\Phi (r_{\min}^{N+1}a)\geq \beta \phi (N)$. Taking $s,t\geq 0$ such
that $C_{0}=r_{\min}^{-s}$ and $p=r_{\min}^{t},$ we have%
\begin{eqnarray}
\frac{\mu (B(x,r_{\min}^{N}a))}{\mu (B(x,r_{\min}^{n}))} &\leq
&3r_{\min}^{-s(1+\phi (N))}r_{\min}^{-t(n-N)}\leq
3r_{\min}^{-s}r_{\min}^{-(t+s/\beta )(n-N)}  \label{E1} \\
&\leq &C\left( \frac{r_{\min}^{N}a}{r_{\min}^{n}}\right) ^{\alpha }  \notag
\end{eqnarray}%
for $\alpha \geq t+s/\beta $ and another constant $C\geq 1$. That proves $%
\overline{\dim }_{\Phi }\mu \leq \alpha <\infty $.
\end{proof}

\begin{remark}
It would be interesting to know if this result holds for all measures.
\end{remark}

The IFS $\{\rho x,\rho x+1-\rho \},$ where $\rho $ is the inverse of a Pisot
number (such as the golden mean), and the IFS $\{x/d+(d-1)jx/(dm)$ $%
\}_{j=0}^{m},$ for integers $2\leq d\leq m,$ are examples of IFS that do not
satisfy the OSC, but satisfy a separation property stronger than the WSC
known as finite type. This notion was introduced by Ngai and Wang in \cite%
{NW}. For equicontractive IFS of similarities on $\mathbb{R}$ it can be
defined as follows.

\begin{defn}
Let $\mathcal{S=}\{S_{j}\}_{j=0}^{m}$ be an equicontractive IFS of
similarities on $\mathbb{R}$ with contraction factor $0<r_{\min}<1$. The
IFS, or any associated self-similar measure, is said to be of \textbf{finite
type} if there is a finite set $F\subseteq \mathbb{R}$ such that if $v,w$
are words on $\{0,1,...,m\}$ of length $n$, and $c$ is the diameter of the
self-similar set, then either 
\begin{equation*}
\left\vert S_{v}(0)-S_{w}(0)\right\vert >cr_{\min}^{n}\text{ or }%
r_{\min}^{-n}\left\vert S_{v}(0)-S_{w}(0)\right\vert \in F\text{.}
\end{equation*}
\end{defn}

An IFS that is of finite type satisfies the WSC. Conversely, it is proven in 
\cite{HHR} that any equicontractive, self-similar measure that satisfies the
WSC and has support $[0,1]$ is of finite type. Any equicontractive IFS that
satisfies the OSC with the open set being $(0,1)$ is also of finite type.

It is known that an IFS of finite type has the property that there are only
finitely many values for $\ell (\Delta _{n})r_{\min}^{-n},$ over all level $n
$ net intervals and all $n$.

\begin{corollary}
Suppose $\mu $ is any equicontractive, self-similar, finite type measure
with support $[0,1]$. Then $\overline{\dim }_{\Phi }\mu <\infty $ for any
dimension function $\Phi = \delta > 0 $.
\end{corollary}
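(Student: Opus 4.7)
The plan is to apply Proposition \ref{T:doubling} with $\Phi\equiv \delta>0$. A positive constant function is trivially a (weakly) increasing, doubling dimension function, and finite type implies WSC, so all hypotheses of the proposition are satisfied. Consequently it suffices to show that $\mu$ is $\Phi$-doubling.

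By Lemma \ref{DoublingRem}, this $\Phi$-doubling property is equivalent to the existence of a constant $C_0\ge 1$ such that
\[
\mu(\Delta_n)\ \ge\ C_0^{-(1+n\delta)}\,\mu(\Delta_n^{\ast})
\]
for every pair of adjacent level-$n$ net intervals $\Delta_n,\Delta_n^{\ast}$ (noting that $\phi(n)=n\delta$ in this setting). Since $\mu(\Delta_n^{\ast})\le 1$, it suffices to prove a uniform lower bound
\[
\mu(\Delta_n)\ \ge\ c\,p_{\min}^{\,n}
\]
for all level-$n$ net intervals, where $p_{\min}=\min_j p_j>0$. Given such a bound, I would choose $C_0\ge\max(c^{-1},\,p_{\min}^{-1/\delta})$, so that $C_0^{1+n\delta}\ge c^{-1}p_{\min}^{-n}$, and the desired inequality follows.

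To establish the lower bound, I would combine self-similarity with the finite-type structure. Since $K=\bigcup_{\sigma\in \Lambda_n}S_\sigma(K)$, each level-$n$ net interval $\Delta_n$ is contained in $S_w[0,1]$ for some $w\in \Lambda_n$ (and $|w|=n$ in the equicontractive setting). Iterating the self-similarity identity $\mu(E)\ge p_j\mu(S_j^{-1}(E))$ gives
\[
\mu(\Delta_n)\ \ge\ p_w\,\mu\bigl(S_w^{-1}(\Delta_n)\bigr)\ \ge\ p_{\min}^{\,n}\,\mu\bigl(S_w^{-1}(\Delta_n)\bigr).
\]
The subinterval $S_w^{-1}(\Delta_n)\subseteq[0,1]$ has length in $[a,1]$ by the gap inequality \eqref{gap}, and its endpoints are of the form $\varepsilon+(d_\sigma-d_w)/r_{\min}^{\,n}$ with $\varepsilon\in\{0,1\}$ and $\sigma\in\Lambda_n$. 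The finite-type hypothesis forces the normalized displacements $(d_\sigma-d_w)/r_{\min}^{\,n}$ into a fixed finite set, so $S_w^{-1}(\Delta_n)$ ranges over a finite collection of nondegenerate closed subintervals of $[0,1]$. Each such subinterval has strictly positive $\mu$-measure because $\mathrm{supp}\,\mu=[0,1]$, and taking $c$ to be the minimum of these finitely many values yields the required bound.

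The main obstacle is the combinatorial step: showing that the family $\{S_w^{-1}(\Delta_n):n\in\mathbb{N},\ w\in\Lambda_n,\ S_w[0,1]\supseteq\Delta_n\}$ is finite. This is exactly where the finite-type assumption is indispensable (rather than merely WSC), since it bounds the normalized relative positions of the maps $S_w$. Once this finiteness is in hand, the remaining estimate---chaining Lemma \ref{DoublingRem} with Proposition \ref{T:doubling} and fixing $C_0$---is routine.
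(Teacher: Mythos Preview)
Your proof is correct and follows the same overall strategy as the paper: reduce to Proposition~\ref{T:doubling}, then verify the $\Phi$-doubling criterion via Lemma~\ref{DoublingRem} by producing a geometric lower bound $\mu(\Delta_n)\gtrsim p_{\min}^{\,n}$ and comparing against the trivial bound $\mu(\Delta_n^\ast)\le 1$.

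The one point of genuine difference is how the lower bound on $\mu(\Delta_n)$ is obtained. You pull $\Delta_n$ back through a containing cylinder $S_w[0,1]\supseteq\Delta_n$ and invoke the finite-type hypothesis to argue that the family of preimages $S_w^{-1}(\Delta_n)$ is finite; this is correct (the endpoints of $\Delta_n$ lie in $S_w[0,1]$, forcing $|d_\sigma-d_w|\le r_{\min}^{\,n}$, so the normalized displacements land in the finite set $F$), though you should state that verification explicitly rather than leaving it as ``the main obstacle.'' The paper instead goes in the opposite direction: it fixes $k$ with $r_{\min}^{\,k}\le a/2$, takes the midpoint $x$ of $\Delta_n$, and chooses any word $\omega$ of length $n+k$ with $x\in S_\omega[0,1]$; the gap inequality~\eqref{gap} then forces $S_\omega[0,1]\subseteq\Delta_n$, giving $\mu(\Delta_n)\ge p_{\min}^{\,n+k}$ immediately. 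This is shorter and uses only the length bound $\ell(\Delta_n)\ge a\,r_{\min}^{\,n}$, not the full finite-type structure. The paper also quotes an upper bound $\mu(\Delta_n)\le A^n$ from~\cite{HHM}, but as your argument shows, the trivial bound $\mu(\Delta_n^\ast)\le 1$ already suffices for the conclusion.
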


\begin{proof}
Choose $a>0$ such that $ar_{\min}^{n}\leq $ $\ell (\Delta _{n})\leq
r_{\min}^{n}$ for all level $n$ net intervals $\Delta _{n}$ and fix an
integer $k$ such that $r_{\min}^{k}\leq a/2$.

Let $\Delta _{n}$ be any level $n$ net interval and suppose $x$ is its
midpoint. Choose a word $\omega $ of length $n+k$ so that $x\in S_{\omega
}[0,1]\subseteq \Delta _{n}$. Thus 
\begin{equation*}
\mu (\Delta _{n})\geq \mu (S_{\omega }[0,1])\geq (\min p_{j})^{n+k}\text{.}
\end{equation*}%
It is known that for any finite type measure there is a constant $A$ such
that $\mu (\Delta _{n})\leq A^{n}$, \cite{HHM}. Since $\phi (n)=n\delta $
when $\Phi =\delta$, it easily follows from this that (\ref{PhiDoubling}) is
satisfied for such $\Phi $. Hence $\mu $ is $\Phi $-doubling and therefore
the upper $\Phi $-dimension is finite for all non-zero constant functions $%
\Phi $.
\end{proof}

The measure $\mu $ studied in Example \ref{FTNotqA} is of finite type and
has support $[0,1]$. As we saw in that example, $\overline{\dim }_{\Phi }\mu
<\infty $ for all $\Phi =\delta \neq 0$, but $\dim _{qA}\mu =\infty $,
showing the sharpness of the corollary. The biased Bernoulli convolutions
discussed next are another class of such examples.

\begin{proposition}
\label{BCbiased}Let $\mu $ be the biased Bernoulli convolution arising from
the IFS $\{\rho x$, $\rho x+1-\rho \}$ with probabilities $p,1-p,$ where $%
p>1/2$ and $\rho $ is the inverse of the golden mean. Then $\mu $ is an
equicontractive, self-similar measure of finite type with support $[0,1],$
but $\dim _{qA}\mu =\infty $.
\end{proposition}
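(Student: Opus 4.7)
The plan is as follows. The structural assertions are essentially immediate or standard. Equicontractivity with common ratio $\rho$ is manifest. Since $\rho>1/2$, the images $S_0([0,1])=[0,\rho]$ and $S_1([0,1])=[\rho^2,1]$ overlap and together cover $[0,1]$, so the self-similar set is $[0,1]=\mathrm{supp}\,\mu$. Finite type follows from the Ngai--Wang theorem since $1/\rho$ is a Pisot number; indeed this IFS is cited at the beginning of Section~\ref{S:self-sim} as one of the canonical finite-type examples.

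The substance is the claim $\dim _{qA}\mu =\infty $. Since $\dim _{qA}\mu =\lim_{\theta \to 1^{-}}\overline{\dim }_{\Phi _{\theta }}\mu $, it suffices to produce, for every $\alpha >0$, a sequence $(x_{n},R_{n},r_{n})$ with $R_{n}\to 0$ and $(\log r_{n})/(\log R_{n})\to 1$ such that
\[
\frac{\mu (B(x_{n},R_{n}))}{\mu (B(x_{n},r_{n}))}\ \geq \ \Bigl(\frac{R_{n}}{r_{n}}\Bigr)^{\alpha }.
\]
Taking $\theta _{n}\to 1$ with $\Phi _{\theta _{n}}(R_{n})\geq \log r_{n}/\log R_{n}-1$ then forces $\overline{\dim }_{\Phi _{\theta _{n}}}\mu \geq \alpha $.

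The construction exploits the defining golden-mean overlap $\rho ^{2}=1-\rho $, i.e.\ $S_{0}^{\,2}(1)=S_{1}(0)$, which is the seed of every nontrivial coincidence. Working in the net-interval language of the previous subsection, I would place $x_{n}$ inside a thin level-$(n+m_{n})$ net interval $\Delta _{n+m_{n}}$ that sits adjacent to a "heavy" cylinder of the form $S_{0}^{\,n}([0,1])$: the larger scale $R_{n}\asymp \rho ^{n}$ captures the heavy cylinder, giving $\mu (B(x_{n},R_{n}))\gtrsim p^{n}$ via (\ref{P/mu}), while the smaller scale $r_{n}\asymp \rho ^{n+m_{n}}$ confines the ball to $\Delta _{n+m_{n}}$. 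The key measure estimate is $\mu (\Delta _{n+m_{n}})\leq Cp^{n}(1-p)^{m_{n}}$, obtained by enumerating (via finite type) the few cylinders $S_{w}$ with $S_{w}([0,1])\supseteq \Delta _{n+m_{n}}$, and observing that the overlap structure forces every such $w$ to contain at least $m_{n}$ occurrences of the symbol $1$ (each contributing a factor $1-p<p$). Comparing
\[
\frac{p^{n}}{Cp^{n}(1-p)^{m_{n}}}=\frac{1}{C}(1-p)^{-m_{n}}\quad\text{with}\quad (R_{n}/r_{n})^{\alpha }=\rho ^{-m_{n}\alpha },
\]
the desired inequality reduces to $\log (1/(1-p))\geq \alpha \log (1/\rho )+O(1/m_{n})$. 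Since $p>1/2$ gives $\log (1/(1-p))>\log 2$ while $\log (1/\rho )$ is fixed, the inequality holds for any $\alpha $ provided we permit $m_{n}$ to be large; choosing $m_{n}\to \infty $ with $m_{n}/n\to 0$ keeps $(\log r_{n})/(\log R_{n})\to 1$.

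The main obstacle is the thin-interval measure estimate $\mu (\Delta _{n+m_{n}})\leq Cp^{n}(1-p)^{m_{n}}$. One must identify precisely which coding words $w$ contribute to $\Delta _{n+m_{n}}$ and confirm via Fibonacci-style renormalisation of the identity $\rho ^{2}=1-\rho $ that each is forced through $S_{1}$ at least $m_{n}$ times. The finite-type hypothesis caps the number of contributing words by an absolute constant $M$, so it suffices to bound a single worst word; but showing that no "heavy" word sneaks in to cover $\Delta _{n+m_{n}}$ is where the bias $p\neq 1/2$ and the specific golden-mean overlap combine to do the work. This is also precisely the step that fails in the unbiased case $p=1/2$, consistent with the symmetric Bernoulli convolution being doubling.
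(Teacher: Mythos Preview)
Your proposal has a decisive arithmetic error at the crucial step. You reduce the target inequality to
\[
\log\frac{1}{1-p}\ \geq\ \alpha\log\frac{1}{\rho}+O(1/m_{n}),
\]
and then claim this holds for every $\alpha$ once $m_{n}$ is large. That is false: sending $m_{n}\to\infty$ only kills the $O(1/m_{n})$ term, leaving $\log\tfrac{1}{1-p}\geq\alpha\log\tfrac{1}{\rho}$, which fails as soon as $\alpha>\log\tfrac{1}{1-p}\big/\log\tfrac{1}{\rho}$. Thus your construction yields only the fixed finite bound $\overline{\dim}_{\Phi_{\theta}}\mu\geq\log\tfrac{1}{1-p}\big/\log\tfrac{1}{\rho}$ (uniformly in $\theta$), not $\dim_{qA}\mu=\infty$. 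The underlying structural problem is that both your measure ratio $\sim(1-p)^{-m_{n}}$ and your scale ratio $R_{n}/r_{n}\sim\rho^{-m_{n}}$ are exponentials in the \emph{same} parameter $m_{n}$ with fixed bases, so their logarithmic quotient is pinned at a finite constant.

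To force $\dim_{qA}\mu=\infty$ one needs a pair of \emph{adjacent} net intervals at the \emph{same} level $N$ whose $\mu$-measures differ by a factor exponential in $N$; then with $R\asymp\rho^{N}$ and $r=R^{1+\delta}$ the scale ratio is only $\rho^{-\delta N}$, and the resulting lower bound on $\overline{\dim}_{\Phi_{\delta}}\mu$ scales like $1/\delta$. The paper obtains exactly this via the transition-matrix description of the finite-type structure: two explicit adjacent net intervals at level $5+2k$ have measures comparable to $\|T_{0}^{k}\|\asymp(p(1-p))^{k}$ and $\|T_{1}^{k}\|\asymp p^{2k}$, where $T_{0},T_{1}$ are the $2\times2$ transition matrices, giving an adjacent-interval ratio $(p/(1-p))^{k}$. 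It is this exponential-in-level discrepancy between \emph{neighbouring} intervals---not a comparison between a level-$n$ cylinder and a level-$(n+m_{n})$ descendant---that makes the quasi-Assouad dimension infinite. Your sketch of the thin-interval estimate (``every contributing word contains at least $m_{n}$ copies of the symbol $1$'') is also not justified and is in any event not the right mechanism.
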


\begin{proof}
It is well known that this IFS is of finite type, c.f. \cite{F05}$.$ As
explained there, the net intervals of level $n$ can all be labelled by $n+1$%
-tuples, $(1,\gamma _{1},...,\gamma _{n}),$ where $\gamma _{i}\in
\{2,...,7\} $ (and the allowed choices for $\gamma _{i+1}$ depend on $\gamma
_{i}$) and 
\begin{equation*}
\rho ^{n+3}\leq \ell (\Delta _{n})\leq \rho ^{n}.
\end{equation*}
Two adjacent net intervals of level four are $\Delta _{0}=(1,3,5,6,3)$ and $%
\Delta _{1}=(1,3,5,7,5)$ which lies immediately to its right. The net
interval 
\begin{equation*}
\Delta _{0}^{(k)}:=(1,3,5,6,3,(5,7)^{k},5)
\end{equation*}%
is the right-most descendent of $\Delta _{0}$ at level $5+2k,$ and adjacent
to it is the left-most descendent of $\Delta _{1}$ at the same level, 
\begin{equation*}
\Delta _{1}^{(k)}:=(1,3,5,7,5,(3,5)^{k},3).
\end{equation*}

From the calculations of \cite[Section 4]{HHN} (in the notation used there $%
c_{1}=3$, $c_{2}=5$ and $\overline{c_{1}}=7$), it follows that $\mu (\Delta
_{0}^{(k)})\sim \left\Vert T_{0}^{k}\right\Vert $ and $\mu (\Delta
_{1}^{(k)})\sim \left\Vert T_{1}^{k}\right\Vert $ where 
\begin{equation*}
T_{0}=%
\begin{bmatrix}
p(1-p) & p(1-p) \\ 
0 & (1-p)^{2}%
\end{bmatrix}%
\text{ and }T_{1}=%
\begin{bmatrix}
p^{2} & 0 \\ 
(1-p)^{2} & p(1-p)%
\end{bmatrix}%
,\text{ }
\end{equation*}%
and the matrix norm $\left\Vert T\right\Vert =\sum_{i,j}\left\vert
T_{ij}\right\vert $ when $T=(T_{ij})$.

An induction argument shows that 
\begin{equation*}
(T_{0})^{2^{k}}=%
\begin{bmatrix}
(p(1-p))^{2^{k}} & A_{k} \\ 
0 & (1-p)^{2^{k+1}}%
\end{bmatrix}%
,\text{ }(T_{1})^{2^{k}}=%
\begin{bmatrix}
p^{2^{k+1}} & 0 \\ 
B_{k} & (p(1-p))^{2^{k}}%
\end{bmatrix}%
\end{equation*}%
with 
\begin{eqnarray*}
A_{k} &=&p(1-p)\prod_{i=0}^{k-1}((p(1-p))^{2^{i}}+(1-p)^{2^{i+1}}) \\
&=&p(1-p)\prod_{i=0}^{k-1}(p(1-p))^{2^{i}}\prod_{i=0}^{k-1}\left( 1+\left( 
\frac{1-p}{p}\right) ^{2^{i}}\right) \\
&=&(p(1-p))^{2^{k}}\prod_{i=0}^{k-1}\left( 1+\left( \frac{1-p}{p}\right)
^{2^{i}}\right)
\end{eqnarray*}%
and 
\begin{eqnarray*}
B_{k} &=&(1-p)^{2}\prod_{i=0}^{k-1}(p^{2^{i+1}}+(p(1-p))^{2^{i}}) \\
&=&(1-p)^{2}p^{2^{k+1}-2}\prod_{i=0}^{k-1}\left( 1+\left( \frac{1-p}{p}%
\right) ^{2^{j}}\right) .
\end{eqnarray*}%
Since $1-p<p$, $\prod_{i=0}^{k-1}\left( 1+((1-p)/p)^{2^{i}}\right) $
converges to a constant $0<$ $c<\infty $. Hence there are positive constants 
$A,B$ such that for large enough $k$ 
\begin{eqnarray*}
\left\Vert T_{0}^{2^{k}}\right\Vert &=&(p(1-p))^{2^{k}}+A_{k}+(1-p)^{2^{k+1}}
\\
&\leq &(p(1-p))^{2^{k}}(1+2c+((1-p)/p)^{2^{k}}) \\
&\leq &A(p(1-p))^{2^{k}}
\end{eqnarray*}%
and similarly 
\begin{equation*}
\left\Vert T_{1}^{2^{k}}\right\Vert \geq Bp^{2^{k+1}}.
\end{equation*}

Let $x_{k}$ be the midpoint of $\Delta _{0}^{(2^{k})}$ and $R_{k}=2\rho
^{5+2^{k+1}}$. Then $R_{k}\geq \ell (\Delta _{0}^{(2^{k})})+\ell (\Delta
_{1}^{(2^{k})}),$ so $B(x_{k},R_{k})\supseteq \Delta _{0}^{(2^{k})}\bigcup
\Delta _{1}^{(2^{k})}$ and therefore 
\begin{equation*}
\mu (B(x_{k},R_{k}))\geq \mu (\Delta _{1}^{(2^{k})})\sim \left\Vert
T_{1}^{2^{k}}\right\Vert \geq Bp^{2^{k+1}}\text{.}
\end{equation*}%
Put $r_{k}=R_{k}^{1+\delta }$ for fixed $\delta >0$. If $k$ is sufficiently
large, then 
\begin{equation*}
r_{k}\leq \rho ^{5+2^{k}+3}/2\leq \ell (\Delta _{0}^{(2^{k})})/2
\end{equation*}%
and therefore $B(x_{k},r_{k})\subseteq \Delta _{0}^{(2^{k})}$. It follows
that%
\begin{equation*}
\mu (B(x_{k},r_{k}))\leq \mu (\Delta _{0}^{(2^{k})})\sim \left\Vert
T_{0}^{2^{k}}\right\Vert \leq A(p(1-p))^{2^{k}}\text{.}
\end{equation*}%
Consequently, 
\begin{equation*}
\frac{\mu (B(x_{k},R_{k}))}{\mu (B(x_{k},r_{k}))}\geq \frac{Bp^{2^{k+1}}}{%
A(p(1-p))^{2^{k}}}=\frac{B}{A}\left( \frac{p}{1-p}\right) ^{2^{k}},
\end{equation*}%
while $R_{k}/r_{k}$ $=R_{k}^{-\delta }=2^{-\delta }\rho ^{-\delta
(5+2^{k+1})}$. Thus 
\begin{equation*}
\overline{\dim }_{\Phi _{\delta }}\mu \geq \frac{\log (p/(1-p))}{2\delta
\left\vert \log \rho \right\vert }
\end{equation*}%
and therefore%
\begin{equation*}
\dim _{qA}\mu =\lim_{\delta \rightarrow 0}\overline{\dim }_{\Phi _{\delta
}}\mu =\infty \text{.}
\end{equation*}
\end{proof}

An equicontractive self-similar measure of finite type is called regular if
the probabilities associated with the left and right-most contractions are
equal and minimal. One example is an $m$-fold convolution of a uniform
Cantor measure on a Cantor set with contraction factor $1/d$ for $d\in 
\mathbb{N}$. Another is a uniform (but not biased) Bernoulli convolution
with contraction factor the inverse of a Pisot number.

\begin{corollary}
Suppose $\mu $ is an equicontractive, self-similar, regular, finite type
measure. Then $\overline{\dim }_{\Phi }\mu <\infty $ whenever $\Phi
(x)\succeq \log |\log x|/|\log x|$ for all $x\leq 1$. In particular, $\dim
_{qA}\mu <\infty $ for such measures $\mu $.
\end{corollary}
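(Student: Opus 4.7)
The plan is to apply Proposition \ref{T:doubling} together with Lemma \ref{DoublingRem}, reducing the question to a polynomial control on the ratio of the measures of adjacent net intervals, and then to extract this polynomial control from the regularity hypothesis together with the finite-type transition-matrix machinery.

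First, by monotonicity of $\overline{\dim}_{\Phi}\mu$ in $\Phi$, and since $\Phi(x)\succeq \log|\log x|/|\log x|$, it suffices to prove finiteness when $\Phi$ equals (a constant multiple of) $\Psi_0(x)=\log|\log x|/|\log x|$ for small $x$. A direct computation shows $\Psi_0$ is an increasing, doubling, dimension function, so Proposition \ref{T:doubling} applies and reduces the claim to showing that $\mu$ is $\Phi$-doubling. By Lemma \ref{DoublingRem}, this in turn amounts to producing a constant $C_0\geq 1$ such that
\begin{equation*}
\mu(\Delta_n)\geq C_0^{-(1+\phi(n))}\mu(\Delta_n^{\ast})
\end{equation*}
for all adjacent level-$n$ net intervals $\Delta_n$, $\Delta_n^{\ast}$, where $\phi(n)=n\Phi(r_{\min}^n)$.

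With our choice of $\Phi$, one computes $\phi(n)=n\Psi_0(r_{\min}^n)\sim \log n/|\log r_{\min}|$, so $C_0^{1+\phi(n)}$ is polynomial in $n$. The problem is therefore reduced to showing the existence of $\alpha>0$ and $C>0$ such that
\begin{equation*}
\frac{\mu(\Delta_n^{\ast})}{\mu(\Delta_n)}\leq Cn^{\alpha}
\end{equation*}
for every pair of adjacent level-$n$ net intervals. Note this is exactly the threshold at which the proof strategy of the preceding corollary (valid only for constant $\Phi=\delta>0$, where exponential ratios were permissible) breaks down, and one now needs a polynomial bound rather than an exponential one.

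The main obstacle is establishing this polynomial ratio, and here the regularity hypothesis is essential, as Proposition \ref{BCbiased} shows it fails for biased Bernoulli convolutions. The approach is to invoke the transition-matrix representation of finite-type measures (as used, e.g., in \cite{F05, HHN} and already in the proof of Proposition \ref{BCbiased}): the measure of a level-$n$ net interval with symbolic label $(\gamma_1,\ldots,\gamma_n)$ is comparable to $\|T_{\gamma_1}\cdots T_{\gamma_n}\|$ for certain non-negative transition matrices $T_{\gamma}$. Two adjacent level-$n$ net intervals share a long common ancestor, so their labels agree on a long initial prefix and differ only in the tail past their common ancestor. The assumption that the left-most and right-most contractions have equal and minimal probability $p_{\min}=\min_j p_j$ is precisely what equalizes the spectral behaviour of the two boundary transition matrices (the ones encoding descent along the extreme right of one ancestor and the extreme left of its neighbour): both matrix products have largest entry of order $p_{\min}^n$, and the ratio of norms of these two products grows only polynomially in the length of the tail. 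Assembling the common prefix with this polynomial boundary ratio then yields the required bound, from which $\overline{\dim}_{\Phi}\mu<\infty$ follows. The final assertion $\dim_{qA}\mu<\infty$ is then immediate since $\Psi_0(x)\to 0$ as $x\to 0$ implies $\dim_{qA}\mu\leq \overline{\dim}_{\Phi}\mu<\infty$ by Proposition \ref{P:basic}\ref{P:basic 3}.
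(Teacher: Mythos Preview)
Your reduction is exactly the paper's: use monotonicity to pass to $\Psi_0(x)=\log|\log x|/|\log x|$, verify $\Psi_0$ is increasing and doubling, apply Proposition~\ref{T:doubling} and Lemma~\ref{DoublingRem}, and compute $\phi(n)\sim \log n/|\log r_{\min}|$ so that $\Phi$-doubling amounts to a polynomial bound on $\mu(\Delta_n^{\ast})/\mu(\Delta_n)$. The paper then simply cites \cite{HHM} for the (in fact linear) bound $\mu(\Delta_n^{\ast})\leq Cn\,\mu(\Delta_n)$ for regular finite-type measures, and the proof is complete in one line.

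Where your write-up differs is that you attempt to sketch this cited inequality via transition matrices rather than quote it. That sketch is on the right track but contains a misstatement: two adjacent level-$n$ net intervals need \emph{not} share a long common prefix. Their common ancestor can sit at any level $k\in\{0,\dots,n-1\}$; what is true is that beyond level $k$ one interval is the right-most descendant and the other the left-most descendant of the two adjacent children, so the ratio of measures is governed by products of the boundary transition matrices of length $n-k$. The regularity hypothesis ($p_0=p_m=\min_j p_j$) forces these two boundary products to have the same exponential rate, and the residual ratio is at most polynomial in $n-k\leq n$ --- this is the content of the result in \cite{HHM}. Your heuristic captures the mechanism but would need the correct geometric picture (short prefix, long boundary tails) and a genuine matrix-norm estimate to stand as a proof. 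If you simply cite \cite{HHM} for the linear bound, your argument coincides with the paper's.
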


\begin{proof}
For such measures $\mu ,$ it is known that $\mu (\Delta _{n})\geq Cn\mu
(\Delta _{n}^{\ast })$, \cite{HHM}, thus $\mu $ is $\Phi $-doubling for such 
$\Phi$.
\end{proof}

The measures studied in Example \ref{FTNotqA} and Proposition \ref{BCbiased}
illustrate the necessity of the hypothesis of regularity. The following
example shows the sharpness of the function $\log |\log x|/|\log x|$.

\begin{example}[An equicontractive, self-similar measure of finite type that
has full support, is regular and has $\overline{\dim }_{\Phi }\protect\mu %
=\infty $ for all $\Phi (x)\ll \log \left\vert \log x\right\vert /\left\vert
\log x\right\vert $]
Consider the IFS, $S_{j}(x)=x/3+d_{j}$ with $d_{0}=0,$ $d_{1}=1/6,$ $%
d_{2}=1/3,$ $d_{3}=2/3$ and probabilities $p_{j}=1/4$ for all $j$. Let $\mu $
be the associated self-similar measure. This example was studied in \cite[%
Ex. 5.11]{HHT}. The two net intervals of level $n$ with endpoint $1/2$ have
length $3^{-n}/2$. The $\mu $-measure of the right interval is at most $%
c_{1}4^{-n}$, while the measure of the left is at least $c_{2}n4^{-n}$ for
some $c_{1},c_{2}>0$. Take $x_{n}$ the midpoint of the right interval, $%
R_{n}=\frac{3}{4}3^{-n}$ and $r_{n}=R_{n}^{1+\Phi (R_{n})}\leq 3^{-n}/4$.
Hence there exist constants $C,\alpha <\infty $ such that 
\begin{equation*}
\frac{c_{2}}{c_{1}}n\leq \frac{\mu (B(x,R_{n}))}{\mu (B(x,r_{n}))}\leq
C\left( \frac{R_{n}}{r_{n}}\right) ^{\alpha }
\end{equation*}%
only if 
\begin{equation*}
\Phi \left( R_{n}\right) \succeq \frac{\log n}{\left\vert \log
R_{n}\right\vert }\succeq \frac{\log \left\vert \log R_{n}\right\vert )}{%
\left\vert \log R_{n}\right\vert }.
\end{equation*}
\end{example}

\section{Discrete Measures}

\label{sec:discrete}

Let $\{a_{n}\}_{n=1}^{\infty }$ be a decreasing sequence tending to $0$ and $%
\{p_{n}\}_{n=0}^{\infty }$ a set of probabilities, $p_n \geq 0$, such that $%
0<\sum_{n=0}^{\infty }p_{n}<\infty $. We define a discrete measure $\mu $
with support $E:=\{a_{n}\}_{n=1}^{\infty }\cup \{0\},$ by%
\begin{equation*}
\mu =\sum_{k}p_{k}\delta _{a_{k}}+p_{0}\delta _{0}\text{.}
\end{equation*}%
Thus $\mu (F)=\sum_{n:a_{n}\in F}p_{n}$ for any Borel set $F\subseteq 
\mathbb{R}\backslash \{0\}$ and $\mu \{0\}=p_{0}$. If we normalize $\mu ,$
then it is a probability measure and normalizing does not change $\Phi $%
-dimensions.

It was shown in \cite{GH} and \cite{GHM1} that if the sequence of gaps $%
\{a_{n}-a_{n+1}\}_{n=1}^{\infty }$ is also decreasing (such as when $%
a_{n}=\beta ^{-n}$ or $n^{-\lambda }$ for $\beta >1$ or $\lambda >0$), then
both the upper Assouad and quasi-Assouad dimensions of $E$ are either $0$ or 
$1$, although not necessarily the same value for the same set $E$. In \cite[%
Example 2.18]{GHM}, it was shown that this need not be true for upper $\Phi $%
-dimensions, even for dimension functions $\Phi $ with upper $\Phi $%
-dimensions lying between the upper quasi-Assouad and Assouad dimensions.
Thus it is natural to ask about the $\Phi $-dimensions for measures
supported on such sets.

As these measures have atoms, their lower $\Phi $-dimensions are always
zero, so it is only the upper $\Phi $-dimensions that are unknown. In \cite%
{FH}, Fraser and Howroyd determined $\dim _{A}\mu $ for such measures $\mu $
when $p_{0}=0$ and either all $p_{n}$ are equal to $n^{-\lambda }$ or all
are equal to $\beta ^{-n}$ for $n\in \mathbb{N}$, and likewise for $a_{n}$
(although with possibly different values for $\lambda $ or $\beta )$. Here,
we will continue to focus on these choices for $p_{n}$ and $a_{n}$,  for $n
\in \mathbb{N}$.

To state our results, it is convenient to let%
\begin{equation*}
L=L_{\Phi }=\limsup_{x\rightarrow 0}\Phi (x)^{-1}\text{ and }\Psi (x)=\frac{%
\log \left\vert \log x\right\vert }{\left\vert \log x\right\vert }.
\end{equation*}%
For $\beta >1$ and $\lambda >0,$ put 
\begin{equation}
s=\frac{\beta -1}{\lambda }\text{ and }t=\frac{\beta }{\lambda +1}\text{ .}
\label{s,t}
\end{equation}%
Note that $s\leq t$ if and only if $t\leq 1$.

\begin{theorem}
\label{P:polypoly}Assume $\mu =p_{0}\delta _{0}+\sum p_{n}\delta _{a_{n}}$
and suppose $\Phi $ is any dimension function.

\begin{enumerate}
\item ``Polynomial-polynomial'': Suppose that for all $n\in \mathbb{N}$, $%
p_{n}=n^{-\beta }$ and $a_{n}=n^{-\lambda }$ for $\beta >1$ and $\lambda >0$%
. If $p_{0}=0,$ then 
\begin{equation*}
\overline{\dim }_{\Phi }\mu =\left\{ 
\begin{array}{cc}
\max (1,s) & \text{if }L\geq \lambda \\ 
\max (t+L(t-s),s) & \text{if }L\leq \lambda%
\end{array}%
\right. ,
\end{equation*}%
while if $p_{0}\neq 0$, then%
\begin{equation*}
\overline{\dim }_{\Phi }\mu =\left\{ 
\begin{array}{cc}
sL+\max (1,s) & \text{if }L\geq \lambda \\ 
(1+L)\max (s,t) & \text{if }L\leq \lambda%
\end{array}%
\right. .
\end{equation*}

\item ``Exponential-exponential'': Suppose that for all $n\in \mathbb{N}$, $%
p_{n}=\beta ^{-n}$and $a_{n}=\lambda ^{-n}$ for $\beta ,\lambda >1$. Then 
\begin{equation*}
\overline{\dim }_{\Phi }\mu =\left\{ 
\begin{array}{cc}
(1+L)\frac{\log \beta }{\log \lambda } & \text{if }p_{0}\neq 0 \\ 
\frac{\log \beta }{\log \lambda } & \text{if }p_{0}=0%
\end{array}%
\right. .
\end{equation*}

\item ``Mixed rates'': (Exponential-polynomial) Suppose that for all $n\in 
\mathbb{N}$, $p_{n}=\beta ^{-n}$and $a_{n}=n^{-\lambda }$ for $\beta >1$ and 
$\lambda >0$. Then 
\begin{equation*}
\overline{\dim }_{\Phi }\mu =\infty. 
\end{equation*}

\item \textquotedblleft Mixed rates\textquotedblright :
(Polynomial-exponential) Suppose that for all $n\in \mathbb{N}$, $%
p_{n}=n^{-\beta }$ and $a_{n}=\lambda ^{-n}$ for $\beta ,\lambda >1$. Then 
\begin{equation*}
\overline{\dim }_{\Phi }\mu =\left\{ 
\begin{array}{cc}
\overline{\lim }_{x\rightarrow 0}\beta \frac{\Psi (x)}{\Phi (x)} & \text{if }%
p_{0}\neq 0 \\ 
\overline{\lim }_{x\rightarrow 0}\frac{\Psi (x)}{\Phi (x)} & \text{if }%
p_{0}=0%
\end{array}%
\right. .
\end{equation*}
\end{enumerate}
\end{theorem}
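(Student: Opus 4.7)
My plan is to determine the upper $\Phi$-dimension in each of the four cases by computing the extremal ratio $\mu(B(x,R))/\mu(B(x,r))$ over admissible centers $x$ and radii $r\le R^{1+\Phi(R)}$, restricting attention to the two natural choices of center $x=0$ and $x=a_n$. The core asymptotic inputs are
\begin{equation*}
\mu(B(0,R)) \sim p_0 + \sum_{a_k<R}p_k \quad\text{and}\quad \mu(B(a_n,R)) \sim \sum_{k\,:\,|a_k-a_n|<R}p_k,
\end{equation*}
both of which admit explicit evaluation once the decay rates of $a_n$ and $p_n$ are fixed. The quantity $L=\limsup_{x\to 0}\Phi(x)^{-1}$ will parametrize how tight the admissibility constraint is: along a sequence $R_n\to 0$ achieving this $\limsup$, one has $r\le R_n^{(1+L)/L+o(1)}$.

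For case (1), centering at $0$ with $p_0=0$ gives $\mu(B(0,R))\sim R^s$, contributing the exponent $s$. Centering at $x=a_n$ with $R=n^{-\alpha}$ for $\alpha<\lambda$ gives $\mu(B(a_n,R))\sim n^{-\alpha s}$ (by summing the tail $\sum_{k>n^{\alpha/\lambda}}k^{-\beta}$), while taking $r$ at the admissibility boundary so that $B(a_n,r)$ isolates $a_n$ gives $\mu(B(a_n,r))\sim n^{-\beta}$. The log-ratio quotient becomes $(\beta-\alpha s)/(\lambda+1-\alpha)$, which is monotone in $\alpha$ with sign determined by $\lambda+1-\beta$, and the admissibility constraint translates to $\alpha\le L(\lambda+1)/(L+1)$. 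Optimizing at this boundary yields the exponent $t+L(t-s)$ when $L\le\lambda$; when $L\ge\lambda$ the optimal $\alpha$ can reach $\lambda$ (beyond which the mass formula changes but the quotient pins at $1$), producing $\max(1,s)$. The $p_0\ne 0$ variants add either $sL$ or the multiplicative factor $(1+L)$, because the numerator $\mu(B(x,R))$ is forced to be bounded below by $p_0$ whenever $0\in B(x,R)$. The exponential-exponential case (2) uses $\mu(B(0,R))\sim p_0+R^{\log\beta/\log\lambda}$ and the analogous single-scale optimization.

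The mixed case (3) is handled by divergent sequences: at $x_n=a_n$ with $r_n$ just below the gap $n^{-\lambda-1}$ (so $\mu(B(x_n,r_n))=\beta^{-n}$) and $R_n\sim kn^{-\lambda-1}$ capturing $a_{n-k},\dots,a_{n+k}$ (so $\mu(B(x_n,R_n))\ge\beta^{-(n-k)}$), the log-ratio is at least $k\log\beta$ while $\log(R_n/r_n)\sim\log k$; letting $k=k(n)\to\infty$ at any rate compatible with the $\Phi$-constraint forces the quotient to infinity. For case (4), at $x_n=a_n=\lambda^{-n}$ with $r_n\sim\lambda^{-n}/3$ and $R_n=\lambda^{-m}$ for $m<n$, the admissibility constraint forces $n-m\le m\Phi(R_n)$ up to lower-order terms. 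When $p_0\ne 0$ the numerator is $\ge p_0$, so the log-ratio is $\sim\beta\log n\sim\beta\log|\log R_n|$ and division by $\log(R_n/r_n)\sim|\log R_n|\Phi(R_n)$ yields $\beta\Psi(R_n)/\Phi(R_n)$. When $p_0=0$, the numerator is $\sim m^{1-\beta}$, and a Taylor expansion of $\log(n^\beta m^{1-\beta})=\log m+\beta\log(1+(n-m)/m)$ with $(n-m)/m=O(\Phi(R_n))$ shows the leading term is just $\log m\sim\log|\log R_n|$, so the $\beta$ factor disappears and the quotient gives $\Psi(R_n)/\Phi(R_n)$.

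The main obstacle is the matching upper bound: the lower bounds follow from the sequences above, but to argue that no other admissible configuration exceeds the claimed value requires a uniform treatment of all centers $x\in\mathrm{supp}\,\mu$ and all admissible radii. Case (1) is the most delicate because the formula is piecewise in the relation between $L$ and $\lambda$, and within each regime one must separately handle the cases where $r$ isolates a single atom (denominator $\sim p_n$) and where it contains many (denominator behaving like an interior partial sum $\sim R^s$), then match the lower-bound construction at the regime boundaries. A careful case analysis, together with sharp estimates on partial sums $\sum_{k\in I}p_k$ over the relevant index sets $I$, is what establishes sharpness in each case.
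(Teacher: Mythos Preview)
Your outline follows the same overall strategy as the paper: explicit sequences centred at $0$ or $a_n$ for the lower bound, and a case analysis for the upper bound. Your $\alpha$-parametrisation for case~(1) is a tidy way to organise the lower bound and reproduces the paper's numbers when $r$ is taken to isolate a single atom. However, there are two places where the sketch, as written, does not reach the claimed value.

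First, in case~(1) with $p_0\neq 0$, $L\geq\lambda$ and $s<1$, your two constructions (isolating $r$, giving $t(L+1)$ only when $L\le\lambda$; and $r\sim a_n$, giving $s(L+1)$) do not produce the sharp bound $sL+1$. The paper obtains it by taking $R_j$ with $a_k<R_j\le a_{k-1}$ and $r_j=R_j^{1+\Phi(R_j)}$, which in this regime lands in the \emph{intermediate} range $b_k<r_j<a_k$; then the denominator is governed by $(a_k+r)^s-(a_k-r)^s\sim a_k^{s-1}r$ rather than by $p_k$, and the quotient evaluates to $sL+1$. Your final paragraph alludes to this ``many-atom'' regime for the upper bound, but the lower bound also requires it.

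Second, your sequence arguments implicitly assume $R^{\Phi(R)}\to 0$ along the chosen subsequence, so that $R/r\to\infty$ and the constant in the definition can be absorbed. When $\limsup_{x\to 0}x^{\Phi(x)}>0$ this fails. The paper treats this regime separately (its Step~2): for $p_0\neq 0$ a direct argument gives $\overline{\dim}_\Phi\mu=\infty$, while for $p_0=0$ one replaces $\Phi$ by $\Phi_0=\max(\Phi,\Psi_0)$ with $\Psi_0(x)=\sqrt{\log|\log x|}/|\log x|$, so that $x^{\Phi_0(x)}\to 0$ and $L_{\Phi_0}=\infty$, and then sandwiches $\overline{\dim}_\Phi\mu$ between $\overline{\dim}_{\Phi_0}\mu$ and $\dim_A\mu$.

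For case~(3), your construction with $R_n\sim kn^{-\lambda-1}$ is more delicate than necessary; the result follows immediately from $\overline{\dim}_\Phi\mu\ge\overline{\dim}_M\mu=\infty$ (the latter because $\mu(B(a_n,r))=\beta^{-n}$ for $r$ below the gap, which cannot dominate any power of $r$). Your case~(4) computations are correct. As you note, the upper bound in case~(1) is the bulk of the work; the paper carries this out via the trichotomy $R>a_k$, $b_k<R\le a_k$, $R\le b_k$ and the corresponding position of $r$, which is what your final paragraph is pointing toward.
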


Before beginning the proof, we will list some immediate corollaries.

\begin{corollary}
{\ }

\begin{enumerate}
\item If $p_{0}\neq 0,$ then $\dim _{A}\mu =\infty $ (in all cases). If $%
p_{0}=0,$ then $\dim _{A}\mu =\infty $ in the mixed rates cases, $\dim
_{A}\mu =\max (1,s)$ in the polynomial-polynomial case and $\dim _{A}\mu
=\log \beta /\log \lambda $ in the exponential-exponential case.

\item The upper quasi-Assouad dimension coincides with the upper Assouad
dimension except in the polynomial-exponential case when $\dim _{qA}\mu =0$
(regardless of the choice of $p_{0}$).

\item If $E=0\cup \{\lambda ^{-n}\}_{n=1}^{\infty }$ and $\Phi (x)/\Psi
(x)\rightarrow \infty $ as $x\rightarrow 0$, then $\overline{\dim }_{\Phi
}E=0$.
\end{enumerate}
\end{corollary}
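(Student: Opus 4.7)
The three parts of the corollary will all follow by inspection from Theorem \ref{P:polypoly}, combined with the definitions of $\dim_A$ and $\dim_{qA}$ as particular $\Phi$-dimensions (or limits thereof), and the inequality $\overline{\dim}_\Phi \mu \geq \overline{\dim}_\Phi \mathrm{supp}\,\mu$ of Proposition \ref{P:basic}. No substantive new argument is needed; the only points requiring care are the convention for $L$ when $\Phi \equiv 0$ and, in part (2), the non-uniform behaviour of the polynomial-exponential formula under the limit $\theta \to 1$.

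For part (1), since $\dim_A \mu = \overline{\dim}_\Phi \mu$ for the dimension function $\Phi \equiv 0$, I will set $L = \limsup_{x \to 0} \Phi(x)^{-1} = \infty$, placing every subcase in the ``$L \geq \lambda$'' regime. Reading off the formulas: in the polynomial-polynomial case with $p_0 = 0$ one gets $\max(1,s)$, and with $p_0 \neq 0$ one gets $sL + \max(1,s) = \infty$ (using $s = (\beta-1)/\lambda > 0$); in the exponential-exponential case one gets $\log\beta/\log\lambda$ when $p_0 = 0$ and $(1+L)\log\beta/\log\lambda = \infty$ when $p_0 \neq 0$; the exponential-polynomial case is unconditionally $\infty$; and in the polynomial-exponential case the formula $\limsup \Psi(x)/\Phi(x)$ (or with a factor of $\beta$) equals $+\infty$ since $\Phi \equiv 0$ and $\Psi(x) > 0$ for small $x$.

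For part (2), I will take $\Phi_\theta = 1/\theta - 1$ and let $\theta \to 1^-$, using $\dim_{qA}\mu = \lim_{\theta \to 1}\overline{\dim}_{\Phi_\theta}\mu$. Then $L_{\Phi_\theta} = \theta/(1-\theta) \to \infty$, so in the polynomial-polynomial, exponential-exponential, and exponential-polynomial cases the formulas of Theorem \ref{P:polypoly} tend to the values already computed in part (1), giving $\dim_{qA}\mu = \dim_A \mu$. The distinguishing case is polynomial-exponential: since $\Phi_\theta$ is a positive constant,
\begin{equation*}
\limsup_{x \to 0} \frac{\Psi(x)}{\Phi_\theta(x)} = \frac{1}{1/\theta - 1}\lim_{x \to 0}\Psi(x) = 0,
\end{equation*}
because $\Psi(x) = \log|\log x|/|\log x| \to 0$ as $x \to 0$. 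Hence $\overline{\dim}_{\Phi_\theta}\mu = 0$ for every $\theta \in (0,1)$, so $\dim_{qA}\mu = 0$ regardless of whether $p_0 = 0$. This discontinuous collapse from $\dim_A \mu = \infty$ to $\dim_{qA}\mu = 0$ is the main content of the statement.

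For part (3), I will manufacture an auxiliary measure supported on $E$: take $\mu = \sum_{n \geq 1} n^{-\beta}\delta_{\lambda^{-n}}$ for some fixed $\beta > 1$. This is the polynomial-exponential instance with $p_0 = 0$ and $\mathrm{supp}\,\mu = E$, so Theorem \ref{P:polypoly} yields $\overline{\dim}_\Phi \mu = \limsup_{x \to 0} \Psi(x)/\Phi(x)$. The hypothesis $\Phi(x)/\Psi(x) \to \infty$ is equivalent to $\Psi(x)/\Phi(x) \to 0$, so this $\limsup$ is $0$. Finally, inequality (R2) of Proposition \ref{P:basic}, $\overline{\dim}_\Phi E \leq \overline{\dim}_\Phi \mu$, combined with non-negativity of $\overline{\dim}_\Phi E$, gives $\overline{\dim}_\Phi E = 0$.
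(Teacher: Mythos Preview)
Your proof is correct and follows essentially the same approach as the paper: setting $\Phi\equiv 0$ (hence $L=\infty$) for part (1), passing to the limit along constant dimension functions for part (2), and for part (3) manufacturing a polynomial-exponential measure supported on $E$ and invoking $\overline{\dim}_\Phi E\leq\overline{\dim}_\Phi\mu$. The only cosmetic difference is that the paper parametrizes the constant functions by $\Phi_\delta=\delta$ with $\delta\to 0$ rather than $\Phi_\theta=1/\theta-1$ with $\theta\to 1$, which is of course equivalent.
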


\begin{proof}
To compute the upper Assouad dimension, just note that when $\Phi =0,$ then $%
L_{\Phi }=\infty $ (so $L\geq \lambda $) and $\overline{\lim }_{x\rightarrow
0}\frac{\Psi (x)}{\Phi (x)}=\infty $. To compute the upper quasi-Assouad
dimension, let $\Phi _{\delta }$ be the constant function $\delta >0,$
observe that $L_{\Phi _{\delta }}\rightarrow \infty $ as $\delta \rightarrow
0$ and use the fact that $\dim _{qA}\mu =\lim_{\delta \rightarrow 0}%
\overline{\dim }_{\Phi _{\delta }}\mu $.

Finally, if $\Phi (x)/\Psi (x)\rightarrow \infty ,$ then, taking $\mu$ as in
the mixed rate case, $0=\overline{\dim }_{\Phi }\mu \geq \overline{\dim }%
_{\Phi }\mathrm{supp}\mu =\overline{\dim }_{\Phi }E$.
\end{proof}

We will give the details of the proof of the theorem in the
polynomial-polynomial case. The other cases require essentially no new ideas
and are less complicated because of the good properties of geometric series
and the fact that exponentials overwhelm polynomials in the asymptotic sense.

We begin with two elementary lemmas.

\begin{lemma}
Under the assumptions and notation of Theorem \ref{P:polypoly}, in the
polynomial-polynomial case%
\begin{equation*}
\mu (B(a_{k},R))\sim \left\{ 
\begin{array}{cc}
\max (R^{s},p_{0}) & \text{if }R>a_{k} \\ 
(a_{k}+R)^{s}-(a_{k}-R)^{s} & \text{if }a_{k}-a_{k+1}<R\leq a_{k} \\ 
a_{k}^{\beta /\lambda } & \text{if }R\leq a_{k}-a_{k+1}%
\end{array}%
\right.
\end{equation*}%
and $\mu (B(0,R))\sim \max (R^{s},p_{0})$.
\end{lemma}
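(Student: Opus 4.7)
The plan is to classify, for each ball, which atoms $a_{n}$ lie inside it, which reduces the problem to estimating tail-like sums of $n^{-\beta}$ over ranges of $n$ determined by the bijection $a_{n} = n^{-\lambda}$, i.e.\ $n = a_{n}^{-1/\lambda}$. I would begin with $\mu(B(0,R))$: the atoms in the ball are exactly those $a_{n}$ with $n^{-\lambda} < R$, i.e.\ $n > R^{-1/\lambda}$, and since $\beta > 1$ the tail $\sum_{n > R^{-1/\lambda}} n^{-\beta}$ is comparable to $\int_{R^{-1/\lambda}}^{\infty} x^{-\beta}\,dx = R^{(\beta-1)/\lambda}/(\beta-1) = R^{s}/(\beta-1)$. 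Including the possible contribution $\mu\{0\} = p_{0}$ gives $\mu(B(0,R)) \sim \max(R^{s}, p_{0})$.

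Next I would split $\mu(B(a_{k},R))$ into three regimes based on the geometry of the ball. When $R > a_{k}$, the interval $(a_{k}-R, a_{k}+R)$ straddles the origin, so $B(a_{k},R) \supseteq [0,R)$ (giving a lower bound via the previous estimate) and $B(a_{k},R) \subseteq (-R, 2R) \subseteq B(0, 2R)$ (giving the matching upper bound), and both bounds evaluate to $\sim \max(R^{s}, p_{0})$. When $R \leq a_{k} - a_{k+1}$, I would use the elementary fact that for $a_{n} = n^{-\lambda}$ the gaps $a_{n}-a_{n+1}$ are a decreasing function of $n$, so $R \leq a_{k}-a_{k+1} \leq a_{k-1}-a_{k}$, which forces $a_{k-1}$ and $a_{k+1}$ (and hence every other atom) to lie outside $B(a_{k},R)$. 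Thus $\mu(B(a_{k},R)) = p_{k} = k^{-\beta} = a_{k}^{\beta/\lambda}$.

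The intermediate regime $a_{k}-a_{k+1} < R \leq a_{k}$ is where the actual work lies. Here the atoms in the ball are those $a_{n}$ with $a_{k}-R < n^{-\lambda} < a_{k}+R$, which translates into $(a_{k}+R)^{-1/\lambda} < n < (a_{k}-R)^{-1/\lambda}$. Since $x \mapsto x^{-\beta}$ is decreasing with $\beta > 1$, the sum $\sum n^{-\beta}$ over such $n$ is comparable to
\begin{equation*}
\int_{(a_{k}+R)^{-1/\lambda}}^{(a_{k}-R)^{-1/\lambda}} x^{-\beta}\, dx \;=\; \frac{1}{\beta-1}\bigl[(a_{k}+R)^{s} - (a_{k}-R)^{s}\bigr],
\end{equation*}
yielding the middle formula of the lemma. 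The main obstacle is making this sum-to-integral comparison uniform in $k$ and $R$, especially near the lower boundary $R \approx a_{k}-a_{k+1}$, where only a handful of atoms lie in the ball and care is needed to ensure that the comparison constants do not degenerate. To confirm the three regimes are internally consistent, I would check the transitions: the mean value theorem gives $(2a_{k}-a_{k+1})^{s}-a_{k+1}^{s} \sim s\, a_{k}^{s-1}(a_{k}-a_{k+1}) \sim k^{-\beta}$ at $R = a_{k}-a_{k+1}$, matching $p_{k} = a_{k}^{\beta/\lambda}$, and $(2a_{k})^{s} \sim a_{k}^{s}$ at $R = a_{k}$, matching the large-$R$ asymptotic when $p_{0} = 0$. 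These boundary computations guarantee that the piecewise estimate is the correct one up to absolute constants.
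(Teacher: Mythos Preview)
Your proposal is correct and follows essentially the same approach as the paper: identify which atoms $a_n$ lie in the ball, convert this to a range of integers $n$ via $n=a_n^{-1/\lambda}$, and estimate $\sum n^{-\beta}$ over that range by the corresponding integral to obtain the powers of $(a_k\pm R)^{s}$. Your treatment is in fact slightly more explicit than the paper's (which simply writes $\sum_{j=M}^{N} j^{-\beta}\sim M^{-\beta+1}-N^{-\beta+1}$ and says ``the reasoning is similar'' for $R>a_k$), and your inclusion argument $[0,R)\subseteq B(a_k,R)\subseteq B(0,2R)$ for the case $R>a_k$, together with the boundary checks at $R=a_k-a_{k+1}$ and $R=a_k$, are exactly the kind of details that justify the uniform comparison constants the paper takes for granted.
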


\begin{proof}
These observations follow from the fact that 
\begin{equation*}
\mu (B(a_{k},R))\sim \left\{ 
\begin{array}{cc}
p_{0} & \text{if }R>a_{k}\text{ and }p_{0}\neq 0 \\ 
\sum_{n:a_{n}\in (a_{k}-R,a_{k}+R)}n^{-\beta } & \text{otherwise}%
\end{array}%
\right. .
\end{equation*}%
When $R\leq a_{k}-a_{k+1},$ then $\mu (B(a_{k},R))=\mu \{a_{k}\}=k^{-\beta
}=a_{k}^{\beta /\lambda }$, as claimed.

When $a_{k}-a_{k+1}<R\leq a_{k},$ then choose integers $N\geq k+1$ and $M$
such that $a_{N+1}<a_{k}-R\leq a_{N}$ and $a_{M}\leq a_{k}+R$ $<a_{M-1}$.
(Put $N=\infty $ if $R=a_{k}$.) We have%
\begin{eqnarray*}
\mu (B(a_{k},R)) &=&\sum_{j=M}^{N}j^{-\beta }\sim M^{-\beta +1}-N^{-\beta +1}
\\
&\sim &(a_{k}+R)^{s}-(a_{k}-R)^{s}.
\end{eqnarray*}

The reasoning is similar if $R>a_{k}$.
\end{proof}

\begin{lemma}
Let $s>0$. There are constants $c_{1},c_{2}>0$, depending only on $s$, such
that 
\begin{equation*}
c_{1}a^{s-1}x\leq (a+x)^{s}-(a-x)^{s}\leq c_{2}a^{s-1}x
\end{equation*}%
whenever $0\leq x\leq a$.
\end{lemma}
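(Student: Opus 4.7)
The plan is to normalize and apply compactness. If $a = 0$ the inequality is trivial, so assume $a > 0$ and set $t = x/a \in [0, 1]$. Factoring out $a^{s}$ yields
\begin{equation*}
(a+x)^{s} - (a-x)^{s} = a^{s}\bigl[(1+t)^{s} - (1-t)^{s}\bigr] = a^{s-1}x \cdot g(t),
\end{equation*}
where
\begin{equation*}
g(t) := \frac{(1+t)^{s} - (1-t)^{s}}{t}, \qquad t \in (0, 1].
\end{equation*}
The problem reduces to showing that $g$ is bounded above and below by positive constants depending only on $s$.

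Next I would verify that $g$ extends continuously to $t = 0$. A Taylor expansion gives $(1 \pm t)^{s} = 1 \pm st + O(t^{2})$ as $t \to 0^{+}$, so $(1+t)^{s} - (1-t)^{s} = 2st + O(t^{3})$ and hence $\lim_{t \to 0^{+}} g(t) = 2s$. Defining $g(0) := 2s$ extends $g$ to a continuous function on $[0, 1]$. Since $u \mapsto u^{s}$ is strictly increasing on $(0, \infty)$, we have $(1+t)^{s} > (1-t)^{s}$ for all $t \in (0, 1]$, so $g > 0$ throughout $[0, 1]$. By continuity and compactness of $[0,1]$, the constants
\begin{equation*}
c_{1} := \min_{t \in [0,1]} g(t) > 0, \qquad c_{2} := \max_{t \in [0,1]} g(t) < \infty
\end{equation*}
exist and depend only on $s$. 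Substituting $t = x/a$ back yields the claimed two-sided bound.

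There is no real obstacle here; the only mildly delicate point is that when $s < 1$ the derivative $f'(t) = s[(1+t)^{s-1} + (1-t)^{s-1}]$ of the numerator blows up at $t = 1$, so a direct mean-value estimate does not give a clean constant. Working with the ratio $g(t) = f(t)/t$ itself sidesteps this, since $f(t)/t$ is continuous and strictly positive on the closed interval $[0,1]$ regardless of whether $s \ge 1$ or $s < 1$.
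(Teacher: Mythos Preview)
Your proof is correct. The paper's argument is a two-case split: for $a/2\le x\le a$ it bounds the expression directly (both $(a+x)^{s}-(a-x)^{s}$ and $a^{s-1}x$ are comparable to $a^{s}$), and for $0\le x<a/2$ it applies the Mean Value Theorem, writing $(a+x)^{s}-(a-x)^{s}=2sx\,\xi^{s-1}$ with $\xi\in(a/2,3a/2)$, so that $\xi^{s-1}\sim a^{s-1}$. Your normalization $t=x/a$ followed by continuity of $g(t)=[(1+t)^{s}-(1-t)^{s}]/t$ on $[0,1]$ and compactness is a genuinely different packaging: it avoids the case split entirely and, as you observe, sidesteps the blow-up of the derivative at $t=1$ when $s<1$ that would obstruct a single global MVT estimate. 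The paper's restriction to $x<a/2$ is exactly what keeps $\xi$ bounded away from $0$ and makes the MVT work there; your compactness argument achieves the same end more uniformly.
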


\begin{proof}
This is clear if $a/2$ $\leq x\leq a$ and otherwise follows quickly from the
Mean value theorem.
\end{proof}

\begin{proof}[Proof of Theorem \protect\ref{P:polypoly}]
We remind the reader that we are proving the polynomial-polynomial case.
Throughout the proof we will use the notation 
\begin{equation*}
X(k,r,R)=\frac{\mu (B(a_{k},R))}{\mu (B(a_{k},r))}\text{ and }X(0,r,R)=\frac{%
\mu (B(0,R))}{\mu (B(0,r))}.
\end{equation*}

\textbf{Step 1: }We will first assume that $x^{\Phi (x)}\rightarrow 0$ as $%
x\rightarrow 0$. \newline

\textbf{Upper bound on }$\overline{\dim }_{\Phi }\mu $: As the arguments are
often quite similar, we will handle the cases $p_{0}=0$ and $p_{0}>0$
concurrently.

Since $X(0,r,R)\sim 1$ if $p_{0}\neq 0$ and comparable to $(R/r)^{s}$ if $%
p_{0}=0$, we easily see that $X(0,r,R)$ $\lesssim (R/r)^{\alpha }$ for any $%
\alpha >0$ when $p_{0}\neq 0,$ and any $\alpha >s$ if $p_{0}=0$. Hence we
now focus on balls centred at $a_{k}$, $k\in \mathbb{N}$.

As it often arises, we will set%
\begin{equation*}
b_{k}=a_{k}-a_{k+1}\sim a_{k}^{(\lambda +1)/\lambda }.
\end{equation*}%
If $R\leq b_{k},$ then also $r\leq b_{k}$ and $X(k,r,R)\sim 1,$ so any $%
\alpha >0$ suffices.

Thus, there remain two cases to study: $R>a_{k}$ and $b_{k}<R\leq a_{k}$.

\textbf{Case 1: }$R>a_{k}$\textbf{.}

\begin{enumerate}
\item Suppose $r>a_{k}$. If $p_{0}\neq 0,$ then $X(k,r,R)\sim 1\lesssim
(R/r)^{\alpha }$ for any $\alpha >0$. If $p_{0}=0,$ then $X(k,r,R)\sim
(R/r)^{s}$.

\item Next, suppose $r\in (b_{k},a_{k}]$. Then, in addition to the
inequality $r\leq R^{1+\Phi (R)},$ we also have%
\begin{equation*}
a_{k}^{(\lambda +1)/\lambda }\lesssim r\leq a_{k}<R.
\end{equation*}%
From the lemmas we know%
\begin{equation*}
X(k,r,R)\sim \frac{\max (p_{0},R^{s})}{(a_{k}+r)^{s}-(a_{k}-r)^{s}\text{ }}%
\sim \frac{\max (p_{0},R^{s})}{a_{k}^{s-1}r\text{ }}.
\end{equation*}

If $s\geq 1,$ then $X(k,r,R)\preceq \max (p_{0},R^{s})r^{-s}$. When $%
p_{0}=0, $ $\alpha >s$ is clearly sufficient to have $X(k,r,R)\lesssim
(R/r)^{\alpha } $. If $p_{0}\neq 0$, it will be enough for $\alpha $ to
satisfy $p_{0}r^{-s}\lesssim (R/r)^{\alpha }$ for small $R$ and all $%
r=R^{1+\Psi (R)}$ with $\Psi (R)\geq \Phi (R)$. Equivalently, we want to
satisfy%
\begin{equation*}
1\lesssim R^{s(1+\Psi (R))-\alpha \Psi (R)},
\end{equation*}%
for small $R$, hence $\alpha >s(L+1)$ is enough.

When $s<1$ (equivalently, $s<t),$ then $a_{k}^{s-1}r\succeq r\left( \min
(R,r^{\lambda /(\lambda +1)})\right) ^{s-1}$. If this minimum is $R,$ (which
can occur only if $\Phi (R)\leq 1/\lambda ),$ then 
\begin{equation*}
X(k,r,R)\lesssim \frac{\max (p_{0},R^{s})}{R^{s-1}r\text{ }}=\left\{ 
\begin{array}{cc}
p_{0}R^{1-s}r^{-1} & \text{if }p_{0}\neq 0 \\ 
Rr^{-1} & \text{if }p_{0}=0%
\end{array}%
\right. .
\end{equation*}%
Again, putting $r=R^{1+\Psi (R)}$ with $\Psi \geq \Phi ,$ it is easy to
check that the requirement $X(k,r,R)\lesssim (R/r)^{\alpha }$ is satisfied
with $\alpha >1$ when $p_{0}=0$ and with $\alpha >1+sL$ when $p_{0}\neq 0$.

If, instead, $\min (R,r^{\lambda /(\lambda +1)})=r^{\lambda /(\lambda +1)}$,
then we have $r=R^{1+\Psi (R)}$ with $\Psi (R)\geq \max (\Phi (R),1/\lambda )
$. Moreover $a_{k}^{s-1}r\gtrsim r^{t}$, thus $X(k,r,R)\lesssim \max
(p_{0},R^{s})r^{-t}$. If $p_{0}=0,$ it will be enough to have $%
R^{s}r^{-t}\lesssim (R/r)^{\alpha }$, and this happens if  
\begin{equation*}
\alpha >t+(t-s)/\Psi (R).
\end{equation*}%
If $L>\lambda ,$ then $\Phi (R)<1/\lambda $ for small enough $R,$ so $1/\Psi
(R)\leq \lambda $. Thus $\alpha >t+(t-s)\lambda =1$ suffices. Similarly, $%
\alpha >t+L(t-s)$ is sufficient when $L\leq \lambda $. If $p_{0}\neq 0$, we
will want $p_{0}r^{-t}\lesssim (R/r)^{\alpha }$ and this is satisfied by any 
$\alpha >\beta $ if $L>\lambda ,$ and for any $\alpha >t(1+L)$ when $L\leq
\lambda $.

Here is a summary of the choices of $\alpha $ for which $X(k,r,R)\leq
c(R/r)^{\alpha }$ in case 1(ii): If $p_{0}=0,$ then it is sufficient to have%
\begin{equation*}
\alpha >\left\{ 
\begin{array}{cc}
s & \text{if }s\geq 1 \\ 
1 & \text{if }s<1\text{ and }L\geq \lambda \\ 
t+L(t-s) & \text{if }s<1\text{ and }L<\lambda%
\end{array}%
\right. .
\end{equation*}%
If $p_{0}\neq 0$, then we can take 
\begin{equation*}
\alpha >\left\{ 
\begin{array}{cc}
s(L+1) & \text{if }s\geq 1 \\ 
\max (1+sL,\beta ) & \text{if }s<1\text{ and }L\geq \lambda \\ 
t(1+L) & \text{if }s<1\text{ and }L<\lambda%
\end{array}%
\right. .
\end{equation*}

\item Otherwise, $r\leq b_{k}\sim a_{k}^{1+1/\lambda }$, say $r=R^{1+\Psi
(R)}$ where $\Psi (R)\geq \max (\Phi (R),1/\lambda \dot{)}$. In this case 
\begin{equation*}
X(k,r,R)\lesssim \max (p_{0},R^{s})a_{k}^{-\beta /\lambda }\lesssim \max
(p_{0},R^{s})r^{-t}\text{. }
\end{equation*}%
If $p_{0}=0$, it suffices to have $\alpha >t+(t-s)/\Psi (R)$. If $s\geq 1,$
(equivalently, $s\geq t$) this inequality is satisfied with any $\alpha >t$,
while if $s<1$ the reasoning is similar to the arguments in case (ii).
Likewise, the reasoning when $p_{0}\neq 0$ is similar to case (ii).

To summarize: If $p_{0}=0,$ it is enough to have%
\begin{equation*}
\alpha >\left\{ 
\begin{array}{cc}
t & \text{if }s\geq 1 \\ 
1 & \text{if }s<1\text{ and }L\geq \lambda \\ 
t+L(t-s) & \text{if }s<1\text{ and }L<\lambda%
\end{array}%
\right. ,
\end{equation*}%
while if $p_{0}\neq 0,$ then we can take%
\begin{equation*}
\alpha >\left\{ 
\begin{array}{cc}
\beta & \text{if }L\geq \lambda \\ 
t(1+L) & \text{if }L<\lambda%
\end{array}%
\right. .
\end{equation*}
\end{enumerate}

\textbf{Case 2: }$b_{k}<R\leq a_{k}$\textbf{. }Here the calculations are the
same regardless of the choice of $p_{0}$.

\begin{enumerate}
\item Suppose $r\leq b_{k}$, say $r=R^{1+\Psi (R)}$ where $\Psi (R)\geq \Phi
(R)$. Here, $\mu (B(a_{k},r))\sim a_{k}^{\beta /\lambda }$, so as $%
a_{k}^{-1}\lesssim \min (R^{-1},r^{-\lambda /(\lambda +1)}),$ 
\begin{equation*}
X(k,r,R)\sim a_{k}^{s-1-\beta /\lambda }R\lesssim R\min
(r^{-1},R^{-(1+1/\lambda )}).
\end{equation*}%
By consideration of the two possible choices for the minimum, it can be
checked that $\alpha >\min (1,L/\lambda )$ will suffice.

\item Otherwise, $b_{k}<r<R\leq a_{k}$ (a choice which can only occur if $%
L\geq \lambda ),$ and then it is easy to see that $\alpha >1$ is sufficient,
so again $\alpha >\min (1,L/\lambda )$ will work.
\end{enumerate}

It is a tedious exercise to check that these constraints on $\alpha $ imply
that the values specifed in the Proposition are upper bounds on $\overline{%
\dim }_{\Phi }\mu $. \newline

\textbf{Lower bound on }$\overline{\dim }_{\Phi }\mu $: We turn now to
proving $\overline{\dim }_{\Phi }\mu $ is as large as claimed.

First, suppose $p_{0}=0$. In this case, 
\begin{equation*}
\dim _{\mathrm{loc}}\mu (0)=\lim_{n}\frac{\log \mu (B(0,n^{-\lambda }))}{%
\log n^{-\lambda }}\sim \frac{(\beta -1)}{\lambda }=s,
\end{equation*}%
so it is certainly true that $\overline{\dim }_{\Phi }\mu \geq s$ for all
choices of $\Phi $.

Essentially the same arguments as in \cite{FY}, show that if $\Phi $ is the
constant function $1/\theta -1$, then 
\begin{equation*}
\overline{\dim }_{\Phi }E=\min \left( 1,\frac{1}{(1+\lambda )(1-\theta )}%
\right) .
\end{equation*}%
As $\overline{\dim }_{\Phi }\mu \geq \overline{\dim }_{\Phi }E,$ it follows
that if $L\geq \lambda ,$ then, also, $\overline{\dim }_{\Phi }\mu \geq 1$.
Hence if $L\geq \lambda ,$ then $\overline{\dim }_{\Phi }\mu \geq \max (1,s)$%
.

Next, suppose $0\leq L<\lambda $ and $t>s$ (for otherwise, $\max
(t+L(t-s),s)=s$). Choose $R_{j}\rightarrow 0$ such that $\Phi
(R_{j})\rightarrow 1/L$ and put $r_{j}=R_{j}^{1+\Phi (R_{j})}$. Choose $%
k=k_{j}$ such that $a_{k}-a_{k+1}\geq r>a_{k+1}-a_{k+2}$. Since $\Phi
(R_{j})>1/\lambda $ for large $j,$ one can check that $R_{j}>a_{k}$ and
hence 
\begin{equation*}
X(k,r_{j},R_{j})\sim \frac{R_{j}^{s}}{a_{k}^{\beta /\lambda }}\succeq
R_{j}^{s-t(1+\Phi (R_{j}))},
\end{equation*}%
while $(R_{j}/r_{j})^{\alpha }\sim R_{j}^{-\alpha \Phi (R_{j})}$. Thus in
order to satisfy $X(k,r_{j},R_{j})\preceq (R_{j}/r_{j})^{\alpha }$ for all $%
j,$ we require 
\begin{equation*}
1\preceq R_{j}^{-\Phi (R_{j})(\alpha -(t+(t-s)/\Phi (R_{j})))}.
\end{equation*}%
Since $R_{j}^{-\alpha \Phi (R_{j})}\rightarrow \infty $ and $\Phi
(R_{j})\rightarrow 1/L,$ we see that $\alpha \geq t+L(t-s)$ is a necessary
condition.

Now assume $p_{0}\neq 0$ and first suppose $0\leq L<\lambda $. With the same
choice of $r_{j},R_{j}$ and $a_{k}$ as above, we have $X(k,r_{j},R_{j})\sim
a_{k}^{-\beta /\lambda }$. It follows that we require $\alpha \geq t(1+L).$

If, instead, we pick $k=k_{j}$ such that $a_{k}\leq R_{j}^{1+\Phi
(R_{j})}<a_{k-1}$ and put $r_{j}=a_{k},$ then $r_{j}\leq R_{j}^{1+\Phi
(R_{j})}$. With these choices for $k,r_{j},R_{j}$, we have 
\begin{equation*}
X(k,r_{j},R_{j})\sim p_{0}a_{k}^{-s}\text{ }\sim R_{j}^{-s(1+\Phi (R_{j}))}%
\text{ },
\end{equation*}%
and one can deduce that $\alpha \geq s(L+1)$ is a necessary condition to
satisfy $X(k,r_{j},R_{j})\preceq (R_{j}/r_{j})^{\alpha }$.

Lastly, assume $L\geq \lambda $. Put $r_{j}=R_{j}^{1+\Phi (R_{j})}$ and
choose $k=k_{j}$ such that $a_{k}<R_{j}\leq a_{k-1}$. For large $j,$ $%
r_{j}\geq a_{k}-a_{k+1}\sim a_{k}^{1+1/\lambda }$. Since $%
r_{j}/R_{j}=R_{j}^{\Phi (R_{j})}\rightarrow 0$, we can assume $r_{j}<a_{k}$.
Hence $X(k,r_{j},R_{j})\sim p_{0}R_{j}^{-(s+\Phi (R_{j})}$ and it follows
that $\alpha \geq 1+sL$ is required.

This completes the proof that $\overline{\dim }_{\Phi }\mu $ is as claimed
in the statement of the Theorem for the polynomial-polynomial case when $%
x^{\Phi (x)}\rightarrow 0$. \newline

\textbf{Step 2:} We now consider the case that $\delta
=\limsup_{x\rightarrow 0}x^{\Phi (x)}>0$ or, equivalently, $%
\limsup_{x\rightarrow 0}\Phi (x)\left\vert \log x\right\vert <\infty $.

For $p_{0}>0,$ choose $R_{j}\rightarrow 0$ such that $R_{j}^{\Phi
(R_{j})}\rightarrow \delta >0$. Pick $k=k_{j}$ such that $a_{k}<R_{j}\leq
a_{k_{j-1}}$ and let $r_{j}=\min (R_{j}^{1+\Phi (R_{j})},a_{k})$, so that $%
R_{j}/r_{j}\sim 1$. If $N_{j}$ is chosen such that $a_{N_{j}+1}\leq
a_{k}+r_{j}\leq a_{N_{j}},$ then $\mu (B(a_{k},r_{j}))\leq
\sum_{N_{j}}^{\infty }p_{i}\rightarrow 0$ as $j\rightarrow \infty $. But $%
\mu (B(a_{k},R_{j}))\geq p_{0}$. Consequently, $X(k,r_{j},R_{j})\rightarrow
\infty $ as $j\rightarrow \infty $ and forces $\overline{\dim }_{\Phi }\mu
=\infty $ for all such $\Phi $.

So, suppose $p_{0}=0$ and define 
\begin{equation*}
\Psi _{0}(x)=\frac{\sqrt{\log \left\vert \log x\right\vert }}{\left\vert
\log x\right\vert }\text{ and }\Phi _{0}(x)=\max \left( \Psi _{0}(x),\Phi
(x)\right) .
\end{equation*}%
Then $\Phi _{0}\in \mathcal{D}$ and as $\Phi _{0}(x)\geq \Psi _{0}(x)$ for
all $x,$ $x^{\Phi _{0}(x)}\rightarrow 0$. Furthermore, $\Phi (x)\leq \Phi
_{0}(x)$, hence $\overline{\dim }_{\Phi _{0}}\mu \leq \overline{\dim }_{\Phi
}\mu \leq \dim _{A}\mu $. Since $\Phi _{0}\leq \Phi +\Psi _{0}$, one can
verify that $L_{\Phi _{0}}=\infty $ and thus by the first part of the
theorem, $\overline{\dim }_{\Phi _{0}}\mu =\max (1,s)$. It was shown in \cite%
{FH} that $\dim _{A}\mu =\max (1,s)$ and hence also $\overline{\dim }_{\Phi
}\mu =\max (1,s),$ as claimed in the statement of the Theorem for the
polynomial-polynomial case.
\end{proof}

\begin{remark}
The choice of $\Psi _{0}$ was made to ensure that the arguments for the
other choices of $p_{n}$ and $a_{n}$ are virtually the same in the final
steps of the proof.
\end{remark}


\begin{thebibliography}{99}
\bibitem{A1} P. Assouad, \textit{Espaces m\'{e}triques, plongements, facteurs%
}, Th{\`{e}}se de doctorat d'\'{E}tat, Publications Math\'{e}matiques
d'Orsay, Univ. Paris XI, No. 223-7769, 1977.

\bibitem{A2} P. Assouad,\textit{\ \'{E}tude d'une dimension m\'{e}trique li%
\'{e}e \`{a} la possibilit\'{e} de plongements dans }$R${$\sp{n}$}, C. R.
Acad. Sci. Paris S\'{e}r. A-B, \textbf{288}(15), A731-A734, 1979.

\bibitem{CDW} H. Chen, Y. Du and C. Wei, \textit{Quasi-lower dimension and
quasi-Lipschitz mapping}, Fractals, \textbf{25}(3), 1-9, 2017.

\bibitem{CWC} H. Chen, M. Wu and Y. Chang,\textit{\ Lower Assouad type
dimensions of uniformly perfect sets in doubling metric space},
arXiv:1807.11629.

\bibitem{Fa} K. Falconer, \textit{Techniques in fractal geometry}, John
Wiley \& Sons, Chichester, 1997.

\bibitem{F05} D. J. Feng, \textit{The limited Rademacher functions and
Bernoulli convolutions associated with Pisot numbers}, Adv. Math., \textbf{%
195}, 24-101, 2005.

\bibitem{Fr} J. M. Fraser, \textit{Assouad type dimensions and homogeneity
of fractals}, Trans. Amer. Math. Soc., \textbf{366}, 6687-6783, 2014.

\bibitem{FHHTY} J.~M.~Fraser, K.E. Hare, K.G. Hare, S. Troscheit, and H. Yu, 
\textit{The Assouad spectrum and the quasi-Assouad dimension: a tale of two
spectra}, Ann. Acad. Fennicae (to appear), arXiv:1804.09607.

\bibitem{FH} J. M. Fraser and D.~Howroyd, \textit{On the upper regularity
dimension of measures}, Indiana Univ. Math. J., (to appear),
arXiv:1706.09340.

\bibitem{FK} J. M. Fraser and A. K\"{a}enm\"{a}ki, \textit{Minkowski
dimension for measures}, arXiv:2001.07055.

\bibitem{FT} J. M. Fraser and S. Troscheit, \textit{Regularity versus
smoothness of measures}, arXiv:1912.07292.

\bibitem{FY} J.~M.~Fraser and H. Yu, \textit{New dimension spectra: finer
information on scaling and homogeneity}, Adv. Math., \textbf{329}, 273-328,
2018.

\bibitem{FY2} J.~M.~Fraser and H. Yu, \textit{Assouad type spectra for some
fractal families}, Indiana Univ. Math. J., (to appear), arXiv:1611.08857.

\bibitem{GH} I. Garci\'{a} and K.E. Hare, \textit{Properties of
Quasi-Assouad dimension}, arXiv:170302526v3.

\bibitem{GHM1} I. Garci\'{a}, K.E. Hare and F. Mendivil, \textit{Assouad
dimensions of comlementary sets}, Proc. Roy. Soc. Edinburgh, Sect. A \textbf{%
148}, 517-540, 2018.

\bibitem{GHM} I. Garci\'{a}, K.E. Hare and F. Mendivil, \textit{Intermediate
Assouad-like dimensions}, arXiv: 1903.07155.

\bibitem{HHM} K.E. Hare, K.G. Hare and K.R. Matthews, \textit{Local
dimensions of measures of finite type}, J. Fractal Geom., \textbf{3},
331-376, 2016.

\bibitem{HHN} K.E. Hare, K.G. Hare and K.-S. Ng, \textit{Local dimensions of
measures of finite type II - measures without full support and with
non-regular probabilities}, Can. J. Math., \textbf{70}, 824-867, 2018.

\bibitem{HHR} K.E. Hare, K.G. Hare and A. Rutar, \textit{When the weak
separation condition implies the generalized finite type condition}, arXiv:
2002.04575.

\bibitem{HHS} K.E. Hare, K.G. Hare and G. Simms,\textit{\ Local dimensions
of measures of finite type III - measures that are not equicontractive}, J.
Math Anal. and Appl., \textbf{458}, 1653-1677, 2018.

\bibitem{HHT} K.E. Hare, K.G. Hare and S. Troscheit, \textit{Quasi-doubling
of self-similar measures with overlaps,} J. Fractal Geometry, (to appear)
arXiv:1807.09198.

\bibitem{HT} K.E. Hare and S. Troscheit,\textit{\ Lower Assouad dimension of
measures and regularity}, Camb. Phil. Soc., (to appear) arXiv:1812.05573.

\bibitem{He} J. Heinnonen, \textit{Lectures on analysis on metric spaces},
Univ. Springer-Verlag, New York, 2001.

\bibitem{KL} A. K\"{a}enm\"{a}ki and J. Lehrb\"{a}ck, \textit{Measures with
predetermined regularity and inhomogeneous self-similar sets}, Ark. Mat., 
\textbf{55}, 165--184, 2017.

\bibitem{KLV} A. K\"{a}enm\"{a}ki, J. Lehrb\"{a}ck and M. Vuorinen, \textit{%
Dimensions, Whitney covers, and tubular neighborhoods}, Indiana Univ. Math.
J., \textbf{62}, 1861--1889, 2013.

\bibitem{KaenmakiRossi15} A. K\"{a}enm\"{a}ki and E. Rossi, \textit{Weak
separation condition, Assouad dimension, and Furstenberg homogeneity}, Ann.
Acad. Fennicae, \textbf{41}, 465--490, 2016.

\bibitem{LN} K.-S. Lau and S.-M. Ngai, \textit{Multifractal measures and a
weak separation condition}, Adv. Math., \textbf{141}, 45-96, 1999.

\bibitem{L1} D.G. Larman, \textit{A new theory of dimension}, Proc. Lond.
Math. Soc., \textbf{3}(1), 178--192, 1967.

\bibitem{LX} F. L\"{u} and L. Xi, \textit{Quasi-Assouad dimension of fractals%
}, J. Fractal Geom., \textbf{3}(2), 187-215, 2016.

\bibitem{NW} S.-M. Ngai and Y. Wang, \textit{Hausdorff dimension of
self-similar sets with overlaps}, J. London Math. Soc., \textbf{{63}, }%
655-672, 2001. 

\bibitem{Ze} M. Zerner, Weak separation properties for self-similar sets,
Proc. Amer. Math. Soc., \textbf{124}, 3529-3539, 1966. 
\end{thebibliography}
\end{document}